\newcommand\N{\mathbb{N}}
\newcommand\R{\mathbb{R}}
\newcommand{\eps}{\varepsilon}
\newcommand{\deb}{\rightharpoonup}
\newcommand{\HH}{{\mathcal H}}
\renewcommand{\H}{\HH^1}
\newcommand{\forget}[1]{}
\def\Om{{\Omega}} 
\def\om2{{\Om\times\Om}}
\def\supp{\mathrm{supp}\,}
\def\dist{\mathrm{dist}\,}
\def\clco{\overline{\mathrm{co}}\,}
\def\diam{\mathrm{diam}\,}
\def\dist{\mathrm{dist}\,}
\def\eps{\varepsilon}
\newcommand{\res}{\llcorner} 
\newtheorem{theorem}{Theorem}[section]
\newtheorem{proposition}[theorem]{Proposition}
\newtheorem{definition}[theorem]{Definition}
\newtheorem{lemma}[theorem]{Lemma}
\newtheorem{corollary}[theorem]{Corollary}
\newtheorem{problem}[theorem]{Problem}
\theoremstyle{remark}
\newtheorem{remark}[theorem]{Remark}
\newtheorem{example}[theorem]{Example}
\title[Asymptotic optimal location of facilities]{Asymptotic optimal location of facilities in a competition between population and industries}
\author{G.~Buttazzo \and F.~Santambrogio \and E.~Stepanov}
\address[G. Buttazzo]{{\tt buttazzo@dm.unipi.it}, Dipartimento di Matematica, Universit\`{a} di Pisa,
Largo B.~Pontecorvo~5, 56127 Pisa, Italy}
\address[F. Santambrogio]{{\tt filippo.santambrogio@math.u-psud.fr}, Laboratoire de Math\'ematiques d'Orsay, Universit\'{e} Paris-Sud 11, 91405 Orsay cedex, France}
\address[E. Stepanov]{{\tt stepanov.eugene@gmail.com}, Dipartimento di Matematica, Universit\`{a} di Pisa,
Largo B.~Pontecorvo~5, 56127 Pisa, Italy \and
Department of Mathematical Physics, Faculty of Mathematics and Mechanics,
St. Petersburg State University, Universitetskij pr.~28, Old Peterhof,
198504 St.Petersburg, Russia}
\thanks{The support of the projects EVaMEF ANR-09-JCJC-0096-01 and ANR-07-BLAN-0235 OTARIE is acknowledged. The work of the third author was also financed also by GNAMPA and by RFBR grant \#11-01-00825.
The work of the first and the third is part of the project 2008K7Z249 ``Trasporto ottimo di massa,
disuguaglianze geometriche e funzionali e applicazioni'' financed the
Italian Ministry of Research.}
\date{February 17, 2011}
\begin{document}
\begin{abstract}
We consider the problem of optimally locating a given number $k$ of points in
$\R^n$ for an integral cost function which takes into account two measures
$\varphi^+$ and $\varphi^-$. The points represent for example new
industrial facilities that have to be located, the measure $\varphi^+$ representing in this case already
existing industries that want to be close to the new ones, and $\varphi^-$
representing
private citizens who want to stay far away. The asymptotic analysis as
$k\to\infty$ is performed, providing the asymptotic density of optimal
locations.
\end{abstract}
\subjclass{49Q20, 49Q10}
\keywords{location problems, average distance functional, mass transportation, asymptotic density}
\maketitle

\section{Introduction}\label{sec_loc1Intro}


A typical problem in facility location can be mathematically described through the choice of a given number of points in a domain so as to minimize an ``average distance'' criterion, the average being computed with respect to a measure $\varphi$. More precisely, for every subset $\Sigma\subset \R^n$ define
\[
F(\Sigma):=\int_{\R^n} \dist(x,\Sigma)\, d\varphi(x),
\]
where $\dist(x, \Sigma):=\inf_{y\in \Sigma} d(x,y)$ is the distance between $x$ and $\Sigma$. In this paper we study the following problem.

\begin{problem}\label{pb_loc1loc}
Find a $\Sigma=\Sigma_{opt}\subset \R^n$ minimizing the functional $F$ among all sets $\Sigma\subset \R^n$ satisfying $\#\Sigma\leq k$. In other words, denoting by ${\mathcal A}_k$ the set of admissible $\Sigma$, i.e.
\[
{\mathcal A}_k:=\{\Sigma\subset \R^n\,:\, \#\Sigma\leq k\},
\]
we are interesting in finding
\[
\min\{F(\Sigma)\,:\, \Sigma\in {\mathcal A}_k\}.
\]
\end{problem}

This problem has been intensively studied when $\varphi$ is a positive measure with finite mass and compact support. Here in the paper we want to analyze what happens when the positivity assumption is dropped, thus taking $\varphi=\varphi^+-\varphi^-$.

One can easily give the above problem (even for signed measures $\varphi$) an obvious economic interpretation useful especially for urban planning. Namely, we suppose that the support of $\varphi$ stands for some populated area (say, a city). Problem~\ref{pb_loc1loc} may be viewed as a simplified model of finding the optimal location $\Sigma$ of at most $k\in\N$ identical new industrial facilities (e.g.\ plants) given the distribution $\varphi^-$ of the population and that of the existing industries $\varphi^+$, both weighted with their respective influence, so that more influential industries or populated areas count more (although this is certainly not the only possible interpretation, e.g.\ one can think of coal-burning electric generating plants that have to be placed close to coal mines and far from the population). The cost function $F$ has then quite a clear meaning.
In fact, the integrals $\int_{\R^n}\dist(x,\Sigma)\,d\varphi^\pm$ measure how close in average the new facilities are to the population and to the existing industries (we call them for simplicity average transportation costs, although such a meaning can be more naturally attributed only to the integral with respect to $\varphi^+$). It has to be noted that, usually, people like to stay away as far as possible from new industrial facilities (because they are polluting, noisy, or spoiling the view from their windows) and thus are interested in increasing $\int_{\R^n}\dist(x,\Sigma)\,d\varphi^-$, while existing industries and the new ones in general are interested in staying as much as possible close to each other (at least, to minimize transportation costs for the new production), hence are inclined to minimize $\int_{\R^n} \dist(x,\Sigma)\, d\varphi^+$. The total cost $F$ takes into account both.  The natural question we investigate in the present paper reads then as follows:
\begin{itemize}
\item[] What is the asymptotic behavior of minimizers and minimum values of Problem~\ref{pb_loc1loc} as $k\to \infty$?
\end{itemize}
When answering to this question we are in particular obliged to study who wins in the ``competition'' between the population and the industries, namely, is the population $\varphi^-$ able to push the new facilities too far from  the existing industries $\varphi^+$, and are the existing industries $\varphi^+$ able to push the new facilities ``to the doors'' of private homes (i.e. too close to $\varphi^-$).

We refer to the above problem as the \emph{Fermat-Weber} or {\em optimal location} problem. It is usually studied for $\varphi^-=0$, in which case it is often referred to also as the {\em $k$-median} (or {\em multimedian}, or \emph{location-allocation}) problem. The economic interpretation is then that of finding the optimal location $\Sigma$ of $k$ identical facilities (e.g., shops, distribution centers etc.), and this is exactly the spirit in which this problem has been introduced by the German economist A.~Weber in~\cite{Weber1909}, though its applications go far beyond urban planning and economics and range from probability and statistics~\cite{GrafLuschgy00} to control theory~\cite{Liberzon03} (see e.g.~\cite{SuzDrez96,SuzOk95,MorgBolt01} and especially~\cite{GrafLuschgy00} for recent surveys on the subject). It is also worth remarking that the name of Fermat appears in this context because when $\varphi^+$ is given by three Dirac masses, then this problem becomes the famous problem of finding a point in the triangle minimizing the sum of distances to the vertices, posed by Fermat and then solved by Torricelli.

The vast majority of papers dealing with the classical location-allocation problem (i.e.\ with $\varphi^-=0$) consider only the discrete case, namely, when $\varphi^+$ is a sum of a finite number of Dirac masses. The continuous case (of not necessarily discrete measures $\varphi^+$) is dealt with relatively more rarely, though one should mention~\cite{GrafLuschgy00,MorgBolt01,FekMitBeur05} (see also references therein) that primarily treat this situation. In this continuous framework, the asymptotic behavior of minimizers to such a problem has received a lot of attention, since it is a question that only arises when one leaves the discrete case. Again, we refer to~\cite{GrafLuschgy00} for the more or less complete survey, but we also mention the recent papers~\cite{BouchJimMah02,TillMosc02} which obtain the results similar to those of~\cite{GrafLuschgy00} on the asymptotic behavior of minimizers using the $\Gamma$-convergence theory, as well as~\cite{BouchJimMah10} which studies from the point of view of $\Gamma$-convergence a very general class of asymptotic facility location problems. For the sake of completeness of the overview, we mention also some related results on the asymptotical analysis of random positioning of points (see, e.g.~\cite{Cohort04}), as well as on the dynamical location-allocation~\cite{BraButSamSte09}.

In this paper we mainly study the above question characterizing the limiting behavior of minimizers.  We first identify the limit of the minimal values of  Problem~\ref{pb_loc1loc}, which converge to
\begin{equation}\label{eunpb}
\min\{F(M)\,:\, M\subset \R^n \mbox{ closed}\},
\end{equation}
i.e.\ the cardinality constraint disappears as $k\to \infty$. Notice that this problem is non-trivial only in the case where $\varphi^-\neq 0$, since otherwise the obvious solution is $M:=\R^n$. Then we guess that the difference between the minimal value in Problem~\ref{pb_loc1loc} and in the unconstrained problem~\eqref{eunpb} is of the order of $k^{-1/n}$, as in the other asymptotical location results, and we prove the respective $\Gamma-$convergence result (Theorem~\ref{th_loc1Gamloc}) after this rescaling. From this convergence result we infer the limit behavior of the minimizers: not only they converge in the Hausdorff sense to a closed set minimizing the unconstrained problem~\eqref{eunpb}, but we also find convergence results for the density of the points of the optimal sets $\Sigma_k$, in the same spirit as it has been done in~\cite{BouchJimMah02,TillMosc02,BouchJimMah10} for the case $\varphi^-=0$.

Few words have to be said about possible generalizations and extensions of
our problem setting. First, instead of considering the transportation cost to be equal to the (Euclidean) distance, one could have considered some nondecreasing functions of a distance (usually one takes power functions), possibly different for the part of the functional depending on the measure $\varphi^+$ and that depending on $\varphi^-$. In this case one expects similar results up to a different rescaling of the functional (in the case $\varphi^-=0$ this is done in the above cited references). Further, instead of making a constraint on the number of points one could also study the penalizations depending on the cardinality of the set. In this paper we deliberately sacrifice such extensions for the sake of simplicity of the presentation of the technique and of the clarity of the result, since the respective extensions can be made relatively easily following the same order of ideas.

The paper is organized as follows. Section~\ref{sec_loc1not} gathers the necessary notation, while Section~\ref{sec_loc1exist} proves that Problem~\ref{pb_loc1loc} admits a solution. Section~\ref{sec_loc1lim0} considers the minimization problem without the cardinality constraint, i.e.\ Problem~\eqref{eunpb}. From Section~\ref{sec_loc1limdens} on, we want to consider the limit ``density'' of the optimal sets $\Sigma_k$ (i.e.\ the ``average number of points per unit volume''): this is done by means of $\Gamma-$convergence, a tool which is introduced in~\cite{DeGFra} to deal with limits of minimization problems. We will recall the fundamental definitions and introduce our $\Gamma-$convergence statement in Section~\ref{sec_loc1limdens}, and prove the results in Section~\ref{sec_loc1limdens1}.
In the Appendix we collect some results on sets satisfying the uniform external ball condition which are used in the paper since we will prove that optimal sets $M$ for~\eqref{eunpb} satisfy such a property, but these results are also of some independent interest.

\section{Notation}\label{sec_loc1not}

The Euclidean norm in $\R^n$ is denoted by $|\cdot|$, and the Euclidean distance between two points $x$ and $y$ by $d(x,y)$. The notation $B_r(x)\subset \R^n$ will always stand for the open ball of radius $r>0$ with center $x\in \R^n$. By $d_H$ we denote the Hausdorff distance between sets.
For a set $E\subset \R^n$ we denote by $1_E$ its characteristic function, by $E^c$ its complement, by $\bar E$ its closure,
by $\partial E$ its boundary,
by $\diam E$ its diameter and, for given $\varepsilon>0$, by $(E)_\varepsilon$ its $\varepsilon$-enlargement defined by
\[
(E)_\varepsilon:=\bigcup_{x\in E} B_\varepsilon(x).
\]

We denote by ${\mathcal L}^n$ the Lebesgue $n$-dimensional measure and by $\HH^k$ the $k$-dimen\-sional Hausdorff measure. All the other measures considered in this paper will be silently assumed
to be signed Borel measures with finite total variation and compact support in $\R^n$ if not otherwise explicitly stated. The support of a measure $\varphi$ is denoted by $\supp\varphi$.

For a closed set $M\subset \R^n$ and for an $x\in \R^n$ we denote by
$\pi_M(x)$ the projection of $x$ to $M$, i.e.\ the point of minimum distance from $x$ to $M$, if such a point is unique. This map is defined everywhere outside of a set $\mathfrak{R}_M$ called \emph{ridge set} of $M$. It is known that $\mathfrak{R}_M$ has zero $n$-dimensional Lebesgue measure since it is the set where the Lipschitz function $\dist(\cdot,M)$ is not differentiable. Moreover, the latter set is also known to be $(\HH^{n-1},n-1)$ rectifiable (see, e.g., Proposition~3.9 from~\cite{MantMen03} where even a slightly stronger result is proven and in a more general context of Riemannian manifold instead of $\R^n$).

As usual, the notation $L^p(\Omega)$ for an open subset $\Omega\subset\R^n$ stands for the respective Lebesgue space. The norm in this space is denoted by $\|\cdot\|_p$. The space $BV(\Omega)$ stands for the space of functions of bounded variation over $\Omega$ (i.e. such that their distributional derivatives are finite measures).

\section{Existence of solutions}\label{sec_loc1exist}

In order to rule out any doubt about the fact that the problems we investigate are well-posed, let us prove first the existence of solutions to Problem~\ref{pb_loc1loc}.

\begin{theorem}\label{th_loc1exist0}
Let $\varphi^\pm$ be finite positive Borel measures with compact supports in $\R^n$ and with $\varphi^+(\R^n)>\varphi^-(\R^n)$. Then Problem~\ref{pb_loc1loc} admits a solution. Furthermore, there is a ball $B$ such that for each $k$ there is a solution $\Sigma_k$ to Problem~\ref{pb_loc1loc} contained in $B$.
\end{theorem}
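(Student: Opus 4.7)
The strategy is to first reduce the problem to the minimization over subsets of a single large closed ball $B$ (the same for every $k$), and then to obtain existence from a standard compactness argument via the Blaschke selection theorem together with the continuity of $F$ with respect to Hausdorff convergence. The mass condition $\varphi^+(\R^n)>\varphi^-(\R^n)$ enters precisely in the reduction step, where it forces $F(\Sigma)$ to be large whenever $\Sigma$ lies far from $K:=\supp\varphi^+\cup\supp\varphi^-$; without such a hypothesis one could in principle profit by pushing $\Sigma$ to infinity in order to inflate the negative part of the functional.

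Fix $x_0\in\supp\varphi^+$ and $R>0$ with $K\subset B_R(x_0)$, so that $\diam K\le 2R$. A uniform upper bound on the minima is obtained by testing $\Sigma=\{x_0\}$, which yields $\min_{\mathcal{A}_k}F\le F(\{x_0\})\le R\,\varphi^+(\R^n)=:C_0$ for every $k\ge 1$. On the other hand, for $\Sigma\subset\R^n\setminus B_D(x_0)$ one has $d^\ast:=\inf_{y\in K}\dist(y,\Sigma)\ge D-R$, while the $1$-Lipschitz property of $\dist(\cdot,\Sigma)$ gives $\dist(y,\Sigma)\le d^\ast+2R$ on $K$. Combining the two,
\[
F(\Sigma)\ge d^\ast\bigl(\varphi^+(\R^n)-\varphi^-(\R^n)\bigr)-2R\,\varphi^-(\R^n),
\]
whose right-hand side exceeds $C_0$ as soon as $D\ge D_0$, for a suitable $D_0$ depending only on $\varphi^\pm$. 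Therefore any optimal $\Sigma_k$ must meet $B_{D_0}(x_0)$. Moreover, picking $p\in\Sigma_k\cap B_{D_0}(x_0)$, every $q\in\Sigma_k$ with $|q-x_0|>D_0+2R$ satisfies $|x-q|>|x-p|$ for all $x\in K$, so $q$ is never the nearest point in $\Sigma_k$ to any element of $K$ and can be removed without changing $F$. Iteratively discarding such points we may assume $\Sigma_k\subset B:=\overline{B_{D_0+2R}(x_0)}$, with $B$ independent of $k$.

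It remains to show that $F$ attains its minimum on $\{\Sigma\subset B:\#\Sigma\le k\}$. By the Blaschke theorem the family of nonempty closed subsets of $B$ is compact for the Hausdorff distance $d_H$, and the subfamily of those with cardinality at most $k$ is closed under $d_H$-convergence (extracting convergent subsequences of the at most $k$ points of $\Sigma_n$ yields a limit with at most $k$ points). For a minimizing sequence $\Sigma_n\to\Sigma$ in $d_H$, the functions $\dist(\cdot,\Sigma_n)$ converge pointwise to $\dist(\cdot,\Sigma)$ and are uniformly bounded by $\diam(B\cup K)$; dominated convergence then gives $F(\Sigma_n)\to F(\Sigma)$, whence $\Sigma$ is optimal in $\mathcal{A}_k$ and contained in $B$, as required.

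The main obstacle is precisely the reduction to bounded sets: once one has confined the candidate minimizers to a fixed compact, existence follows by a routine compactness-plus-continuity argument. The quantitative use of the mass inequality $\varphi^+(\R^n)>\varphi^-(\R^n)$ is what provides the required coercivity of $F$ at infinity, and what ensures that the ball $B$ can be chosen independently of $k$.
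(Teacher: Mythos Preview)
Your proof is correct and follows essentially the same approach as the paper: coercivity of $F$ at infinity coming from the mass inequality $\varphi^+(\R^n)>\varphi^-(\R^n)$, removal of points that are never nearest to $\supp\varphi$ (the paper formalizes this via the ``essential part'' $\mathrm{Ess}\,\Sigma$), and then compactness plus continuity. The only cosmetic differences are that you give a direct quantitative bound where the paper argues by contradiction, and you invoke Blaschke's theorem where the paper simply views the problem as minimizing a continuous function on a compact subset of $(\R^n)^k$; one small wording issue is that you speak of ``any optimal $\Sigma_k$'' before existence has been established, but the intended reading---any competitor with $F(\Sigma)\le C_0$---makes the argument go through.
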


Before proving the above Theorem~\ref{th_loc1exist0} we show
that the strict inequality 
\[
\varphi^+(\R^n)>\varphi^-(\R^n)
\]
is essential for the existence of a solution (otherwise there may be no solution to Problem~\ref{pb_loc1loc}, even if $k=1$).

\begin{example}\label{ex_loc1_nonexist1}
Let $\varphi^+$ be the uniform probability measure over the unit circumference $\partial B_1(0)\subset\R^2$, i.e. $\varphi^-:=\frac{1}{2\pi}\H\res\partial B_1(0)$, and $\varphi^-$ be the Dirac mass concentrated in the origin. Then Problem~\ref{pb_loc1loc} with $k:=1$ admits no solution. In fact, for every point $z\in \R^2$, denoting by $r:=|z|$ and
\begin{align*}
f(r):= F(\{z\})& =-r+\frac{1}{2\pi}\int_{\partial B_1(0)}|x-z|\,d\H(x)\\
& =
-r+\frac{1}{2\pi}\int_{\partial B_1(0)}|x-(r,0)|\,d\H(x),
\end{align*}
we get for the derivative of the above function
\[
f'(r)=-1+\frac{1}{2\pi}\int_{B_1(0)}\frac{(x-(r,0))}{|x-(r,0)|}\,d\H(x)<0.
\]
Note that in this case we have $\varphi^+(\R^2)=\varphi^-(\R^2)$.
\end{example}

To prove Theorem~\ref{th_loc1exist0} we first introduce the following notation. For a closed set $M\subset \R^n$ let $\mathrm{Ess}\, M\subset M$ stand for the set of such points
$x\in M$ for which there is an $y\in \supp\varphi$ (possibly depending on $x$) such that
\[
d(y,x)=\dist(y,M).
\]
One clearly has then
\begin{equation}\label{eq_loc1_ess1}
F(\mathrm{Ess}\, M)=F(M),
\end{equation}
that is, $\mathrm{Ess}\, M$ is the ``essential'' part of $M$ (the points outside of which do not count for the value of the functional), and this justifies our notation. It is also immediate to notice that $\mathrm{Ess}\, M$ is closed whenever so is $M$. This is due to the fact that the support of $\varphi$ is compact; otherwise it is not true as seen for instance in the example of a closed interval $M:=\{0\}\times [-\pi/2, \pi/2]$ and a $\varphi$ with
$\supp\varphi$ being the graph of the function $y=\arctan x$, in which case
 $\mathrm{Ess}\, M$
is an open interval
 $\mathrm{Ess}\, M=\{0\}\times (-\pi/2, \pi/2)$.

We also need the following lemma.

\begin{lemma}\label{lm_F_lim}
Let $\Sigma_j$ be an arbitrary sequence of closed sets such that $F(\Sigma_j)$ is bounded from above. Then, under the 
assumption $\varphi^+(\R^n)>\varphi^-(\R^n)$, there exists a ball $B$ such that $\mathrm{Ess}\,\Sigma_j\subset B$ for every $j$.
\end{lemma}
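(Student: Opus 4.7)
The plan is to argue by contradiction. Suppose the conclusion fails; then after passing to a subsequence there exist points $x_j \in \mathrm{Ess}\,\Sigma_j$ with $|x_j|\to\infty$. By the defining property of $\mathrm{Ess}\,\Sigma_j$, for each $j$ we can pick $y_j$ in the compact set $K := \supp\varphi^+\cup\supp\varphi^-$ with
\[
d(y_j,x_j)=\dist(y_j,\Sigma_j).
\]
Since $K$ is bounded but $|x_j|\to\infty$, we get $\dist(y_j,\Sigma_j)=d(y_j,x_j)\to\infty$.

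Next, I would upgrade this to a uniform blow-up over $K$. For any $y\in K$ and any $z\in\Sigma_j$, the triangle inequality gives $d(y,z)\ge d(y_j,z)-d(y,y_j)\ge\dist(y_j,\Sigma_j)-\diam K$, hence
\[
\inf_{y\in K}\dist(y,\Sigma_j)\ \ge\ \dist(y_j,\Sigma_j)-\diam K\ \longrightarrow\ \infty,
\]
and symmetrically $\sup_{y\in K}\dist(y,\Sigma_j)\le \inf_{y\in K}\dist(y,\Sigma_j)+\diam K$. Setting $r_j:=\inf_{y\in K}\dist(y,\Sigma_j)$, we then have $r_j\le\dist(\cdot,\Sigma_j)\le r_j+\diam K$ on $K$, and in particular on $\supp\varphi^\pm$.

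The final step is to plug these bounds into $F(\Sigma_j)=\int\dist(x,\Sigma_j)\,d\varphi^+(x)-\int\dist(x,\Sigma_j)\,d\varphi^-(x)$ to obtain
\[
F(\Sigma_j)\ \ge\ r_j\,\varphi^+(\R^n)-(r_j+\diam K)\,\varphi^-(\R^n)\ =\ r_j\bigl(\varphi^+(\R^n)-\varphi^-(\R^n)\bigr)-\diam K\cdot\varphi^-(\R^n).
\]
By the hypothesis $\varphi^+(\R^n)>\varphi^-(\R^n)$, the coefficient of $r_j$ is strictly positive, so $F(\Sigma_j)\to+\infty$, contradicting the assumed boundedness. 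I do not foresee any serious obstacle — the only mildly delicate point is ensuring that the points realizing $\mathrm{Ess}\,\Sigma_j$ are compared against the \emph{same} compact set $K$ for all $j$, which is exactly why the compactness of $\supp\varphi^\pm$ (already incorporated in the standing assumptions) is crucial.
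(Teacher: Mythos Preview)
Your proof is correct and follows essentially the same approach as the paper's: both argue by contradiction, use the defining property of $\mathrm{Ess}\,\Sigma_j$ to find a witness $y_j\in\supp\varphi$ with $\dist(y_j,\Sigma_j)\to\infty$, upgrade this via the triangle inequality to a uniform lower bound on $\dist(\cdot,\Sigma_j)$ over $\supp\varphi$, and then exploit $\varphi^+(\R^n)>\varphi^-(\R^n)$ to force $F(\Sigma_j)\to+\infty$. The only cosmetic difference is that the paper tracks $R_j:=\min\{|x|:x\in\Sigma_j\}$ while you track $r_j:=\inf_{y\in K}\dist(y,\Sigma_j)$, but these play the same role and lead to the same final inequality.
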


\begin{proof}
Let us fix a ball $B_R(0)$ containing the support of $\varphi$, which is supposed to be compact. If the assertion is false, then there is a sequence $x_j\in\mathrm{Ess}\, \Sigma_{k_j}$ with $x_j\to\infty$ as $j\to\infty$. This, of course, implies that for every $y_j\in\supp\varphi$ such that $d(y_j,x_j)=\dist(y_j,\Sigma_{k_j})$ one has
\[
\dist(y_j,\Sigma_{k_j}) = d(y_j,x_j)\geq |x_j|-|y_j|\geq |x_j|-R\to \infty,
\]
as $j\to \infty$. Taking into account, for every $x_j'\in \Sigma_{k_j}$, the inequalities
\[
|y_j|+|x_j'|\geq d(y_j, x_j')\geq \dist(y_j,\Sigma_{k_j}),
\]
we get $x_j'\to \infty$. Let  $R_j:=\min\{|x|\,:\,x\in\Sigma_{k_j}\}$ and apply this last inequality to the points $x'_j\in \Sigma_{k_j}$ such that $|x'_j|=R_j$. We get $R_j\to\infty$ and, since
$$F(\Sigma_{k_j})\ge(R_j-R)\varphi^+(\R^n)-(R_j+R)\varphi^-(\R^n)=R_j(\varphi^+(\R^n)-\varphi^-(\R^n))-C$$
we also get $F(\Sigma_{k_j})\to\infty$ (due to the assumption $\varphi^+(\R^n)>\varphi^-(\R^n))$, which is a contradiction with the boundedness of $F(\Sigma_{k_j})$.
\end{proof}

Now we prove Theorem \ref{th_loc1exist0}.

\begin{proof}[Proof of Theorem  \ref{th_loc1exist0}]
To prove existence, for every $k$, of a minimizer in ${\mathcal A}_k$, we just apply the previous Lemma \ref{lm_F_lim} to any minimizing sequence $\Sigma_j\in\mathcal{A}_k$. Without loss of generality we may assume $\Sigma_j=\mathrm{Ess}\,\Sigma_j$ (otherwise just replace every set with its essential part). This provides uniform boundedness for such sets $\Sigma_j$. We are hence minimizing a continuous function over a compact subset of $(\R^{n})^k$, and the existence of a minimizer is straightforward.

Consider now a sequence of minimizers $\Sigma_k\in{\mathcal A}_k$. Notice that, by minimality, since ${\mathcal A}_1\subset {\mathcal A}_k$, we have $F(\Sigma_k)\leq F(\Sigma_1)$. This allows to apply again Lemma \ref{lm_F_lim} and prove that any sequence of essential minimizers is contained in the same ball, thus getting the second part of the statement.
\end{proof}

\section{Limit set}\label{sec_loc1lim0}

In this section we consider the problem
\begin{equation}\label{eq_loc1lim0P}
\min\{F(M)\,:\, M\subset \R^n \mbox{ closed}\}.
\end{equation}

\begin{proposition}\label{prop_loc1existLim0}
Let $\varphi^+(\R^n)>\varphi^-(\R^n)$. Then Problem~\eqref{eq_loc1lim0P} admits a minimizer, that can be taken compact.
\end{proposition}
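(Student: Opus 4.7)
The plan is to run a direct-method argument with respect to the Hausdorff topology: starting from a minimizing sequence, replace each set by its essential part to apply Lemma~\ref{lm_F_lim} and obtain uniform containment in a fixed ball, extract a Hausdorff-convergent subsequence via Blaschke's compactness theorem, and pass to the limit using continuity of $F$ along such convergence.

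Before this, I would check that $\inf F>-\infty$, so the problem is well posed. For any $y^*\in\supp\varphi^+$ and any closed nonempty $M$, the $1$-Lipschitz character of $\dist(\cdot,M)$ gives $|\dist(x,M)-\dist(y^*,M)|\le D$ for every $x\in\supp\varphi$, where $D:=\diam\supp\varphi$. Integrating yields
\[
F(M)\ge \dist(y^*,M)\bigl(\varphi^+(\R^n)-\varphi^-(\R^n)\bigr)-D\bigl(\varphi^+(\R^n)+\varphi^-(\R^n)\bigr),
\]
which is bounded below (and in fact tends to $+\infty$ as $\dist(y^*,M)\to\infty$) by the hypothesis $\varphi^+(\R^n)>\varphi^-(\R^n)$.

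Since $F(\R^n)=0$, any minimizing sequence $(M_j)$ of nonempty closed sets may be assumed to satisfy $F(M_j)\le 1$, so in particular $F(M_j)$ is bounded above. Setting $M_j':=\mathrm{Ess}\,M_j$, these sets are closed (by compactness of $\supp\varphi$) and nonempty (for every $y\in\supp\varphi$ the distance $\dist(y,M_j)$ is attained on the closed set $M_j$), satisfy $F(M_j')=F(M_j)$ by~\eqref{eq_loc1_ess1}, and by Lemma~\ref{lm_F_lim} are all contained in a fixed ball $B$. Blaschke's selection theorem, applied to nonempty closed subsets of $\overline B$, then provides a subsequence converging in the Hausdorff metric to a nonempty compact $M^*\subset\overline B$.

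To conclude, I would show $F$ is continuous along this convergence: since $|\dist(x,A)-\dist(x,B)|\le d_H(A,B)$ uniformly in $x$, Hausdorff convergence yields uniform convergence of $\dist(\cdot,M_j')$ to $\dist(\cdot,M^*)$ on $\supp\varphi$, and finiteness of $\varphi^\pm$ then gives $F(M_j')\to F(M^*)$ by dominated convergence, so that $M^*$ is a compact minimizer. The main subtlety is that admissible sets $M$ are a priori unbounded, so some passage to a compactly contained representative (here, via the essential part) is needed before any compactness-plus-continuity argument can be applied; once Lemma~\ref{lm_F_lim} supplies this, the rest is a standard direct method.
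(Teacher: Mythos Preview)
Your proof is correct and follows essentially the same route as the paper's own argument: replace a minimizing sequence by essential parts, invoke Lemma~\ref{lm_F_lim} to confine them to a fixed ball, apply Blaschke's selection theorem, and use Hausdorff-continuity of $F$ to pass to the limit. You add a few details the paper leaves implicit (the finiteness of the infimum, nonemptiness of $\mathrm{Ess}\,M_j$, and the explicit Lipschitz estimate giving continuity of $F$), but the structure is identical.
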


\begin{proof}
Again, as in the proof of Theorem~\ref{th_loc1exist0} let $M_j$ be a minimizing sequence of closed sets (without any additional constraints) for $F$.  Without loss of generality we can assume that they are ``essential'' (i.e.\ $\mbox{Ess}\, M_j=M_j$), otherwise, take the essential parts of the latter, observing that the essential part of a closed set is still closed. Lemma \ref{lm_F_lim} gives the existence of a sufficiently large ball $B\subset\R^n$ (which without loss of generality will be assumed closed) such that $M_j\subset B$ for all sufficiently large $j\in\N$. According to the Blaschke theorem (Theorem~4.4.6 of~\cite{AmbrTilli00}) one has $M_j\to M\subset B$ in the sense of Hausdorff convergence up to a subsequence (not relabeled), and keeping in mind  the continuity of $F$ with respect to this convergence, we obtain that $M$ is a minimizer of~\eqref{eq_loc1lim0P} (which in particular, is compact).
\end{proof}

We notice now the following easy but important property of minimizers
to Problem~\eqref{eq_loc1lim0P}.

\begin{proposition}\label{prop_loc1chargeLim0}
Let $\varphi^+(\R^n)>\varphi^-(\R^n)$, and let $M$ be any minimizer of Problem~\eqref{eq_loc1lim0P}. Then $\varphi^+(M)\geq \varphi^+(\R^n)-\varphi^-(\R^n)>0$.
\end{proposition}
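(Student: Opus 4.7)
My plan is to exploit the minimality of $M$ by comparing it against the $t$-enlargement $M_t:=\{x\in\R^n : \dist(x,M)\le t\}$, which is a closed competitor, and then look at the right derivative of $F$ at $t=0$.

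The key observation is that, since $M$ is closed, $\dist(x,M_t)=\max\{0,\dist(x,M)-t\}$, so
\[
\dist(x,M_t)-\dist(x,M)=-\min\{t,\dist(x,M)\}.
\]
Splitting $\varphi=\varphi^+-\varphi^-$ and integrating, I would get
\[
F(M_t)-F(M)=-\int \min\{t,\dist(x,M)\}\,d\varphi^+(x)+\int \min\{t,\dist(x,M)\}\,d\varphi^-(x).
\]
For $x\notin M$ one has $\dist(x,M)>0$ (because $M$ is closed), hence $\min\{t,\dist(x,M)\}/t\to 1$ as $t\to 0^+$, while for $x\in M$ the ratio is $0$. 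Since the integrands are bounded by the constant $1$ and $\varphi^\pm$ are finite measures with compact support, dominated convergence gives
\[
\lim_{t\to 0^+}\frac{F(M_t)-F(M)}{t}=-\varphi^+(M^c)+\varphi^-(M^c).
\]

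Because $M_t$ is an admissible competitor and $M$ is a minimizer, this limit must be non-negative, hence $\varphi^+(M^c)\le \varphi^-(M^c)\le \varphi^-(\R^n)$. Rewriting via $\varphi^+(M)=\varphi^+(\R^n)-\varphi^+(M^c)$ yields exactly
\[
\varphi^+(M)\ge \varphi^+(\R^n)-\varphi^-(\R^n),
\]
and the strict positivity is then just the standing hypothesis $\varphi^+(\R^n)>\varphi^-(\R^n)$.

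I do not foresee a significant obstacle: the main subtlety is merely to make sure the competitor is admissible (taking the closed enlargement rather than the open one, as $(E)_\varepsilon$ is defined via open balls in the paper's notation) and that the formula $\dist(x,M_t)=\max\{0,\dist(x,M)-t\}$ is applied correctly. Dominated convergence is immediate thanks to the boundedness of the integrand and the finiteness of $\varphi^\pm$.
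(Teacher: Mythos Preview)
Your proof is correct and follows essentially the same approach as the paper: compare $M$ with its $\varepsilon$-enlargement and examine the first-order variation of $F$ as $\varepsilon\to 0^+$. Your version is marginally cleaner in that you use the exact identity $\dist(x,M_t)=(\dist(x,M)-t)^+$ and compute the one-sided derivative directly via dominated convergence, whereas the paper works with one-sided inequalities for $\dist(x,(M)_\varepsilon)$ and finishes by contradiction; the substance is identical.
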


\begin{proof}
To prove that $\varphi^+(M)>0$, note that
for every $\eps >0$ one has
\begin{align*}
\dist(x, (M)_\eps)& \leq \dist(x, M) -\eps, \qquad x\not \in (M)_\eps,
\end{align*}
while for all $x\in \R^n$ one has
\begin{align*}
\dist(x, (M)_\eps)& \leq \dist(x, M),\\
\dist(x, (M)_\eps)& \geq \dist(x, M) -\eps.
\end{align*}
Hence,
we get
\begin{align*}
    \int_{\R^n}\dist(x, (M)_\eps)\, d\varphi^-(x)& \geq \int_{\R^n}\dist(x, M)\, d\varphi^-(x)-\eps\varphi^-(\R^n),\\
    \int_{\R^n}\dist(x, (M)_\eps)\, d\varphi^+(x)&=\int_{\R^n\setminus (M)_\eps}\dist(x, (M)_\eps)\, d\varphi^+(x)+\\
     & \quad \int_{(M)_\eps}\dist(x, (M)_\eps)\, d\varphi^+(x)\\
     & \leq \int_{\R^n}\dist(x, M)\, d\varphi^+(x)-\eps\varphi^+(\R^n\setminus (M)_\eps).
\end{align*}
Therefore,
\begin{equation}\label{eq_loc1lim0est1}
\begin{aligned}
F( (M)_\eps)&\leq F(M)+ \eps\left(\varphi^-(\R^n)-\varphi^+(\R^n\setminus (M)_\eps)\right)\\
& = F(M)+ \eps\left(\varphi^-(\R^n)-\varphi^+(\R^n)\right) +\eps \varphi^+((M)_\eps).
\end{aligned}
\end{equation}
Keeping in mind  that $\varphi^+((M)_\eps)\to \varphi^+(M)$ as $\eps\to 0^+$, we get that
\[
\varphi^+(M)\geq \varphi^+(\R^n)-\varphi^-(\R^n)>0,
\]
since otherwise the estimate~\eqref{eq_loc1lim0est1} together with the assumption
$\varphi^-(\R^n)<\varphi^+(\R^n)$ would give $F( (M)_\eps) <F(M)$ for sufficiently small $\eps>0$, contrary to the optimality of $M$.
%
\end{proof}

It is important to note that in general Problem~\eqref{eq_loc1lim0P} admits
many minimizers, both compact and noncompact (see Example~\ref{ex_loc1_Mopt1} below).
In the following statement we propose to select a particular minimizer (which will be always unbounded), that will play a special role in what follows.

\begin{proposition}\label{prop_loc1Mprime}
If $\Sigma$ is a minimizer of Problem~\eqref{eq_loc1lim0P}, then the closed set
\begin{equation}\label{eq_loc1defM1}
M:=\bigcap_{y\in \supp \varphi^-} B^c_{\dist(y, \Sigma)} (y).
\end{equation}
still solves the same problem, while
$M$ contains $\Sigma$
and
\begin{align*}
    \int_{\R^n}\dist(x, M)\, d\varphi^+(x)& \leq \int_{\R^n}\dist(x, \Sigma)\, d\varphi^+(x),\\
    \int_{\R^n}\dist(x, M)\, d\varphi^-(x)& = \int_{\R^n}\dist(x, \Sigma)\, d\varphi^-(x).
\end{align*}
In particular, $\varphi^+(M\setminus \Sigma)=0$.
\end{proposition}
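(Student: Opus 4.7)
The plan is to exploit the very specific geometric structure of $M$: it is the largest closed set whose distance from each $y\in\supp\varphi^-$ is at least $\dist(y,\Sigma)$. Two immediate consequences will do most of the work. First, $\Sigma\subset M$, because any $z\in\Sigma$ satisfies $|z-y|\geq\dist(y,\Sigma)$ for all $y\in\supp\varphi^-$, so $z$ lies in every set being intersected. Second, for each $y\in\supp\varphi^-$ one has $\dist(y,M)=\dist(y,\Sigma)$: the inclusion $\Sigma\subset M$ gives the $\leq$ direction, while the defining property of $M$ (every $x\in M$ lies outside $B_{\dist(y,\Sigma)}(y)$) gives the $\geq$ direction.

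From these two facts the two integral assertions follow at once. The pointwise inequality $\dist(\cdot,M)\leq\dist(\cdot,\Sigma)$ coming from $\Sigma\subset M$ integrates to
\[
\int_{\R^n}\dist(x,M)\,d\varphi^+(x)\leq \int_{\R^n}\dist(x,\Sigma)\,d\varphi^+(x),
\]
while $\dist(y,M)=\dist(y,\Sigma)$ on $\supp\varphi^-$ integrates (using that $\varphi^-$ is concentrated on its support) to the claimed equality for $\varphi^-$. Subtracting, $F(M)\leq F(\Sigma)$; since $\Sigma$ is optimal for~\eqref{eq_loc1lim0P} and $M$ is closed, $M$ is also a minimizer and the inequality is in fact an equality.

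For the final claim $\varphi^+(M\setminus\Sigma)=0$, I would use this equality of the $\varphi^+$-integrals: since $\dist(\cdot,\Sigma)-\dist(\cdot,M)$ is a nonnegative function with zero $\varphi^+$-integral, it vanishes $\varphi^+$-a.e. But on $M\setminus\Sigma$ this difference is \emph{strictly} positive, because $x\in M$ with $M$ closed gives $\dist(x,M)=0$, whereas $x\notin\Sigma$ with $\Sigma$ closed gives $\dist(x,\Sigma)>0$. Consequently $\varphi^+(M\setminus\Sigma)=0$.

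The main thing to be careful about is the second bullet, where I need the equality of distances from points of $\supp\varphi^-$ (not merely $\varphi^-$-almost everywhere) in order to upgrade it to an equality of integrals — but this is exactly what the construction delivers, so there is no real obstacle. The rest is essentially a bookkeeping argument combining $\Sigma\subset M$ and the optimality of $\Sigma$.
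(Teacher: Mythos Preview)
Your proof is correct and follows essentially the same approach as the paper: show $\Sigma\subset M$, deduce the $\varphi^+$-integral inequality from monotonicity, verify $\dist(y,M)=\dist(y,\Sigma)$ for every $y\in\supp\varphi^-$ to get the $\varphi^-$-equality, and conclude $F(M)\leq F(\Sigma)$. Your treatment of the final claim is slightly more explicit than the paper's (which simply says that if $\varphi^+(M\setminus\Sigma)>0$ the first inequality becomes strict, contradicting optimality of $\Sigma$), but the underlying argument is the same.
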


\begin{proof}
Clearly, $M$ is closed.
We also note that $\Sigma\subset M$.
In fact, otherwise, there is an $x\in \Sigma$ such that $x\not\in M$, i.e.\ $x\in B_{\dist(y, \Sigma)} (y)$ for
some $y\in \supp\varphi^-$, or, in other words, $|x-y|<\dist(y, \Sigma)$ which is absurd.
Therefore,
\begin{align*}
    \int_{\R^n}\dist(x, M)\, d\varphi^+(x)& \leq \int_{\R^n}\dist(x, \Sigma)\, d\varphi^+(x).
\end{align*}
On the other hand, by construction of $M$ one has for every $y\in \supp\varphi^-$ that
\begin{equation}\label{eq_loc1dist1}
\dist(y, M)=\dist(y,\Sigma).
\end{equation}
In fact, $\dist(y, M)\leq \dist(y,\Sigma)$ for all $y\in \R^n$ since $\Sigma\subset M$, while
for every $y\in \supp\varphi^-$ and for every $x\in M$ one has $|y-x|\geq \dist(y,\Sigma)$, hence
\[
\dist(y, M)\geq \dist(y,\Sigma).
\]
The equality~\eqref{eq_loc1dist1} implies then
\begin{align*}
    \int_{\R^n}\dist(x, M)\, d\varphi^-(x)& = \int_{\R^n}\dist(x, \Sigma)\, d\varphi^-(x).
\end{align*}
Hence,
\[
F(M)\leq F(\Sigma),
\]
that is, $M$ is a minimizer of~\eqref{eq_loc1lim0P}. The last assertion is true since otherwise the first inequality becomes
strict contradicting the optimality of $\Sigma$.
\end{proof}

From now on we will call every minimizer $M$ of Problem~\eqref{eq_loc1lim0P} satisfying~\eqref{eq_loc1defM1}, where $\Sigma$ is some minimizer of the same problem, \emph{canonical} with respect to $\Sigma$ or simply canonical (if the reference to $\Sigma$ is unnecessary).

\begin{example}\label{ex_loc1_Mopt1}
Let $\varphi^-:= \delta_0$ be a Dirac mass concentrated in the origin,
and $\varphi^+\ll {\mathcal L}^n$ be such that
\[
\varphi^+(\R^n) >1=\varphi^-(\R^n).
\]
Then every canonical minimizer $M$ of Problem~\eqref{eq_loc1lim0P} is the complement of an open ball $B_r(0)$. To find it, we consider the function
\[
f(r):=F(B_r^c(0))=\int_{\R^n}(r-|x|)^+\,d\varphi^+(x)- r,
\]
so that finding a canonical minimizer amounts to minimizing $f$. One easily gets for the derivative of $f$ the expression
\[
f'(r)=\varphi^+(B_r(0))-1,
\]
which gives for the minimum (where $f'(r)=0$) the expression
\[
\varphi^+(B_r(0))=1.
\]
The latter determines uniquely the canonical minimizer.
Clearly however, the minimizers (not necessarily canonical) of Problem~\eqref{eq_loc1lim0P} are not unique. In fact, for instance also $B_r(0)^c\cap\supp\varphi^+$ is a minimizer.
\end{example}

It is worth remarking that although the canonical minimizer was unique in the above Example~\ref{ex_loc1_Mopt1}, we do not know whether this is true in general.

We now consider another important question, namely, when a minimizer
of Problem~\eqref{eq_loc1lim0P} is located a positive distance away from the support of $\varphi^-$.

\begin{proposition}\label{prop_loc1chargeLim1}
Let 
$M$ be any minimizer of Problem~\eqref{eq_loc1lim0P}.
If either
\[
\clco\supp \varphi^+\cap \clco\supp\varphi^-=\emptyset,
\]
where $\clco$ stands for the closed convex envelope of a set,
or
\[
\dist(\supp \varphi^+,\supp\varphi^-)>\diam\supp\varphi^+,
\]
then $M\cap \supp \varphi^-=\emptyset$.
\end{proposition}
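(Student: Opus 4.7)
The plan is to argue by contradiction: assume there exists $y_0\in M\cap\supp\varphi^-$ and construct, under each of the two hypotheses, a closed competitor $M'$ with $F(M')<F(M)$. In both cases the construction modifies $M$ inside a small ball $B_\eps(y_0)$: this yields a gain on the $\varphi^-$ side, and the role of the hypothesis is to guarantee that the $\varphi^+$ side does not deteriorate. The common quantitative device is that, since $y_0\in\supp\varphi^-$, we have $\varphi^-(B_{\eps/4}(y_0))>0$ for every $\eps>0$, which turns a pointwise improvement of order $\eps$ on $B_{\eps/4}(y_0)\cap\supp\varphi^-$ into a strict decrease of the functional.

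Under the second hypothesis I would take directly $M':=M\setminus B_\eps(y_0)$. Proposition~\ref{prop_loc1chargeLim0} provides a point $z\in M\cap\supp\varphi^+$, so that $\dist(x,M)\le|x-z|\le\diam\supp\varphi^+$ for every $x\in\supp\varphi^+$. On the other hand, for each $y\in B_\eps(y_0)$ and each $x\in\supp\varphi^+$, the triangle inequality gives $|x-y|\ge \dist(\supp\varphi^+,\supp\varphi^-)-\eps$, which for $\eps$ smaller than $\dist(\supp\varphi^+,\supp\varphi^-)-\diam\supp\varphi^+$ strictly exceeds the bound $\diam\supp\varphi^+$ on $\dist(x,M)$. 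Hence no point of $B_\eps(y_0)$ can be a nearest point of $M$ to any $x\in\supp\varphi^+$, so the $\varphi^+$ integral is unchanged by the removal, while on $B_{\eps/4}(y_0)\cap\supp\varphi^-$ a direct triangle-inequality calculation yields $\dist(x,M')\ge 3\eps/4$ versus $\dist(x,M)\le\eps/4$.

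Under the first hypothesis, Hahn-Banach provides a hyperplane $H$ strictly separating the two disjoint compact convex sets $\clco\supp\varphi^+$ and $\clco\supp\varphi^-$; denote the corresponding open half-spaces by $H^+\supset\supp\varphi^+$ and $H^-\supset\supp\varphi^-$, and let $r$ be the orthogonal reflection across $H$. Choosing $\eps<\dist(y_0,H)$ ensures $B_\eps(y_0)\subset H^-$ and $r(B_\eps(y_0))\subset H^+$, and I would set $M':=(M\setminus B_\eps(y_0))\cup r(M\cap B_\eps(y_0))$. The elementary inequality $|x-r(y)|\le|x-y|$, valid whenever $x$ and $y$ lie on opposite sides of $H$, shows that $\dist(x,M')\le\dist(x,M)$ on $\supp\varphi^+$ and $\dist(x,M')\ge\dist(x,M)$ on $\supp\varphi^-$. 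Moreover, on $B_{\eps/4}(y_0)\cap\supp\varphi^-$ the distance from $x$ to $M\setminus B_\eps(y_0)$ is at least $3\eps/4$ while the distance from $x$ to $r(M\cap B_\eps(y_0))\subset H^+$ is at least $\dist(x,H)\ge\dist(y_0,H)-\eps/4$, so for $\eps\le\dist(y_0,H)$ one gets $\dist(x,M')\ge 3\eps/4$ against $\dist(x,M)\le\eps/4$, i.e.\ the same quantitative gain as in the previous case.

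The main obstacle in both cases is that the naive inequalities $\dist(x,M')\ge\dist(x,M)$ on $\supp\varphi^-$ and $\dist(x,M')\le\dist(x,M)$ on $\supp\varphi^+$ only give $F(M')\le F(M)$, which is not enough: the single point $y_0$ may carry no $\varphi^-$-mass, so a strict pointwise improvement only at $y_0$ is useless. Localizing the comparison to a ball $B_{\eps/4}(y_0)$ of positive $\varphi^-$-measure and obtaining a uniform lower bound of order $\eps$ on the improvement there is what turns the argument from non-strict into strict and contradicts the minimality of $M$.
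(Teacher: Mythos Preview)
Your argument is correct and follows the same two ingredients as the paper (reflection across a separating hyperplane in Case~1, removal of a piece of $M$ near $\supp\varphi^-$ in Case~2), but you implement both in a \emph{local} rather than \emph{global} way. In Case~1 the paper reflects the whole portion $M\cap\bar\pi^-$ across the hyperplane to form $\tilde M=(M\cap\pi^+)\cup R(M\cap\bar\pi^-)$, obtains $F(\tilde M)\le F(M)$ for free, and then observes that at any $x_0\in M\cap\supp\varphi^-$ one has $\dist(x_0,\tilde M)\ge\dist(x_0,\pi^+)>0=\dist(x_0,M)$; you instead reflect only the small piece $M\cap B_\eps(y_0)$ and do the same strict-gain computation on $B_{\eps/4}(y_0)$. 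In Case~2 the paper removes the whole neighbourhood $(\supp\varphi^-)_\eps$ from $M$, while you remove only $B_\eps(y_0)$; in both versions the key point---that no $x\in\supp\varphi^+$ can project onto the removed piece, thanks to Proposition~\ref{prop_loc1chargeLim0} and the diameter hypothesis---is identical. Your localised approach has the virtue of being uniform between the two cases and of making the quantitative gain $\dist(\cdot,M')-\dist(\cdot,M)\ge\eps/2$ on a set of positive $\varphi^-$-measure fully explicit; the paper's global reflection makes the non-strict inequality $F(\tilde M)\le F(M)$ slightly more immediate. One cosmetic point: in your Case~1 the competitor $M'=(M\setminus B_\eps(y_0))\cup r(M\cap B_\eps(y_0))$ need not be closed, but replacing it by its closure leaves all distances unchanged (the paper's $\tilde M$ has the same issue and the same fix).
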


\begin{proof}
We consider the two cases in two separate parts of the proof.

{\em Case 1}.
We consider first the case
\[
\clco\supp \varphi^+\cap \clco\supp\varphi^-=\emptyset.
\]
Then there is a hyperplane $\pi\subset \R^n$ such that
\[
\supp \varphi^\pm\subset \pi^\pm,
\]
where $\pi^+$ and $\pi^-$ stand for the open half-spaces bounded by $\pi$. We denote by $R\colon \R^n\to \R^n$ the reflection with respect to $\pi$, and set
\[
M^+:=M\cap\pi^+,\quad M^-:=M\cap\bar\pi^-,\quad\tilde M:=M^+\cup R(M^-).
\]
For every $x\in \pi^+$ (in particular, for $x\in\supp\varphi^+$) and $y\in\bar\pi^-$ (in particular, for $y\in M^-$) one has $|x-y|>|x-R(y)|$. Hence,
\[
\dist(x,M)\ge\dist(x,\tilde M),
\]
which implies
\begin{equation}\label{eq_loc1fp1}
\int_{\R^n}\dist(x,M)\,d\varphi^+(x)
\ge\int_{\R^n}\dist(x,\tilde M)\,d\varphi^+(x).
\end{equation}

One the other hand, consider any $x\in \pi^-$ : since for any $y\in \pi^-$ we have $|x-y|< |x-R(y)|$, we get on the contrary $\dist(x, M)\leq \dist(x, \tilde M)$. This implies
$$\int_{\R^n}\dist(x,M)\,d\varphi^-(x)
\le\int_{\R^n}\dist(x,\tilde M)\,d\varphi^-(x)$$
and, summing up, $F(M)\ge F(\tilde M)$.

Now, we argue by contradiction assuming that $M^-\ne\emptyset$. Take $x_0\in\supp\varphi^-\cap M\subset \pi^-$. For such a point $x_0$ one has $0=d(x_0,M)<d(x_0,\pi^+)\le d(x_0,\tilde M)$, which implies a strict inequality leading in the end to $F(M)>F(\tilde M)$ (since the same strict inequality will stay true in a neighborhood of $x_0$, which is charged by $\varphi^-$ since $x_0\in\supp\varphi^-$).

This gives a contradiction to the optimality of $M$.

{\em Case 2}.
We pass now to the case
\[
\dist(\supp \varphi^+,\supp\varphi^-)>\diam\supp\varphi^+.
\]
Let $\eps>0$ be such that
\[
\dist(\supp \varphi^+,\supp\varphi^-)>\diam\supp\varphi^+ +\eps.
\]
We claim that $M\cap (\supp\varphi^-)_\eps=\emptyset$ which would conclude the proof. In fact, otherwise for every $x\in M\cap (\supp\varphi^-)_\eps$ there is no $z\in \supp\varphi^+$ such that
\[
|z-x|=\dist(z,M),
\]
since this would mean that $M\cap \supp\varphi^+=\emptyset$ contrary to Proposition~\ref{prop_loc1chargeLim0}. Thus setting
\[
M':=M\setminus (\supp\varphi^-)_\eps,
\]
we get that $\dist(z, M')=\dist(z,M)$ for every $z\in \supp\varphi^+$, while
$\dist(z, M')\geq \dist(z,M)$ for every $z\in \supp\varphi^-$, and, moreover,
$\dist(z, M')> \dist(z,M)$ for a set of $z\in \supp\varphi^-$ of positive measure $\varphi^-$.
This would imply $F(M')>F(M)$ providing the desired contradiction with the optimality of $M$.
\end{proof}

We remark that for the above result to hold true, it is not enough to have
\[
\supp \varphi^+\cap \supp\varphi^-=\emptyset,
\]
as the following example shows.

\begin{example}\label{ex_loc1charge1Lim0}
Let $n=1$ and let
\[
\varphi^+:=b\delta_d+m\delta_{2R},\qquad
\varphi^-:=a\delta_0+c\delta_{R},
\]
with
\[
0<d< R/2,\quad 0<a<b(1-d/R),\quad c>a+b,\quad m>a+c.
\]
(see Figure~\ref{loc1_fig0}).

Hence $\dist(\supp \varphi^+,\supp\varphi^-)=d>0$. We show that $\{0, 2R\}$ is optimal and that for every optimal $\Sigma\subset \R$ one has
\[
\{0,2R\}\subset \Sigma,
\]
which implies in particular that
$\Sigma\cap \supp\varphi^-\neq\emptyset$.

To this aim, first note that $2R\in \Sigma$.  In fact, Proposition~\ref{prop_loc1chargeLim0} guarantees that $\varphi^+(\Sigma)>\varphi^+(\R)-\varphi^-(\R)$, but the mass of the point $d$ alone is not sufficient, because of the assumption $m>a+c$.

We have proved that $2R$ belongs to any optimal set $\Sigma$. Keep in mind that any optimal set $\Sigma$ may be replaced with $\Sigma':=\bigcup_{z\in\supp\varphi^+}\{x_z\}$, where $x_z\in\Sigma$ stands for an arbitrary point such that $|z-x_z|=\dist(z,\Sigma)$. The new set $\Sigma'\subset\Sigma$ is still optimal, since $\dist(z,\Sigma')=\dist(z,\Sigma)$ for every $z\in\supp\varphi^+$ and $\dist(z,\Sigma')=\dist(z,\Sigma)$ for every $z\in\supp\varphi^-$. In particular, in this case, this means that every optimal set must contain a smaller set composed of exactly two points, that is again optimal. And this optimal set must contain $2R$ as well. In practice, we are only lead to find the second point of this set, considering only sets of the form $\{x,2R\}$.

We are hence left with one only degree of freedom and we can consider the function $f(x):=F(\{x,2R\})$. Our goal is to prove that it is optimal at $x=0$.

This function is given by
\[
f(x)=b\left[|x-d|\wedge(2R-d)\right]-a\left[|x|\wedge(2R)\right]-c\left[|x-R|\wedge R\right].
\]

It is a piecewise linear function satisfying
\begin{align*}
&f'(x)=a-b &\mbox{ if } &2d-2R<x<0,\\
&f'(x)=-a-b+c &\mbox{ if } &0<x<d,
\end{align*}
and
\begin{align*}
f(0)&=bd-cR,\qquad f(R)=-aR+b(R-d),\\
f(2R)&=f(-2R)=-2aR-cR+b(2R-d).
\end{align*}
The point $0$ is the only minimizer of this function if and only if $f'<0$ at the left of $0$, $f'>0$ at the right of $0$, and at the other nodes one has the strict inequality $f(x)>f(0)$, which means that we impose
$$a-b<0,\quad -a-b+c>0,\quad f(R)\wedge f(2R)\wedge f(-2R)>f(0).$$
The assumptions guarantee $b>a$ and $c>a+b$; notice that
$$f(0)=bd-cR<bd-(a+b)R=f(R)-2b(R-d)<f(R).$$
 Moreover, the inequality $f(0)<f(2R)$ is exactly guaranteed by the assumption $a<b(1-d/R)$.
The conclusion is $0\in \Sigma$ as claimed.
\end{example}

\begin{center}
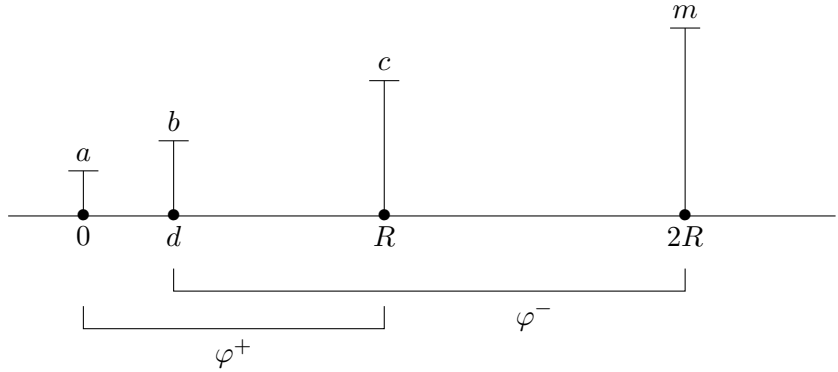
\begin{figure}
\begin{tikzpicture}

\draw (-1,0)  --  (10,0);
\draw (0,0) node[below] {$0$} node{$\bullet$};
\draw (1.2,0) node[below] {$d$}   node{$\bullet$};
\draw (4,0) node[below] {$R$}  node{$\bullet$};
\draw (8,0) node[below] {$2R$}  node{$\bullet$};

\draw (0,0)  --  (0,0.6);
\draw (-0.2,0.6) -- (0.2,0.6) node[midway,above] {$a$} ;

\draw (1.2,0)  --  (1.2,1);
\draw (1,1) -- (1.4,1) node[midway,above] {$b$} ;

\draw (4,0)  --  (4,1.8);
\draw (3.8,1.8) -- (4.2,1.8) node[midway,above] {$c$} ;

\draw (8,0)  --  (8,2.5);
\draw (7.8,2.5) -- (8.2,2.5) node[midway,above] {$m$} ;

\draw (0,-1.2) -- (0,-1.5)  --  (4,-1.5) node[midway,below] {$\varphi^+$} -- (4,-1.2);

\draw (1.2,-0.7) -- (1.2,-1)  --  (8,-1)  -- (8,-0.7);
\draw (6,-1) node[below] {$\varphi^-$};

\end{tikzpicture}

\caption{The measures $\varphi^\pm$ as in Example~\ref{ex_loc1charge1Lim0}: the vertical segments above the respective points stand for the respective masses.}
\label{loc1_fig0}
\end{figure}
\end{center}

\begin{proposition}\label{prop_loc1extballLim0}
Suppose $\varphi^+(\R^n)>\varphi^-(\R^n)$, let $\Sigma$ be any minimizer of~\eqref{eq_loc1lim0P}, satisfying $\Sigma\cap\supp\varphi^-=\emptyset$ (which is the case, for instance, if any of the conditions of Proposition~\ref{prop_loc1chargeLim1} hold)
and  let $M$ be given by~\eqref{eq_loc1defM1}.  Then $M$
satisfies the uniform external ball condition (see Definition~\ref{def_loc1_uball}), and, in particular,
$\partial M$ is $(\HH^{n-1},n-1)$-rectifiable.
\end{proposition}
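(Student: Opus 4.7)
The plan is to exploit directly the explicit representation of $M$ from~\eqref{eq_loc1defM1}, which writes $M$ as the intersection of the closed exteriors of a family of open balls, or equivalently
\[
M^c=\bigcup_{y\in\supp\varphi^-} B_{r_y}(y),\qquad r_y\defeq\dist(y,\Sigma).
\]
Since $M^c$ is already a union of open balls, for each $x\in\partial M$ I will produce one such ball whose boundary passes through $x$, and then control its radius from below uniformly in $y$.

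First I would observe that, since $\supp\varphi^-$ is compact (it is assumed to have finite total variation and compact support) and $\Sigma$ is closed with $\Sigma\cap\supp\varphi^-=\emptyset$, the quantity
\[
\delta\defeq\dist(\supp\varphi^-,\Sigma)
\]
is strictly positive and provides a uniform lower bound $r_y\geq \delta$ for every $y\in\supp\varphi^-$. Next I would take an arbitrary $x\in\partial M$: since $M$ is closed, $x\in M$, and since $x$ is in the closure of $M^c$, there exist sequences $y_n\in\supp\varphi^-$ and $x_n\in B_{r_{y_n}}(y_n)$ with $x_n\to x$. Compactness of $\supp\varphi^-$ lets me pass to a subsequence with $y_n\to y_*\in\supp\varphi^-$, and continuity of $y\mapsto r_y=\dist(y,\Sigma)$ (actually $1$-Lipschitz) gives $|x-y_*|\leq r_{y_*}$. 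On the other hand $x\in M$ forces $|x-y|\geq r_y$ for every $y\in\supp\varphi^-$, hence $|x-y_*|=r_{y_*}$. Therefore the open ball $B_{r_{y_*}}(y_*)$ is contained in $M^c$ and passes through $x$ on its boundary, with radius $r_{y_*}\geq\delta$. This is exactly the uniform external ball condition with radius $\delta$ (in the sense of Definition~\ref{def_loc1_uball}).

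The rectifiability statement is then obtained by invoking the appendix result on sets satisfying the uniform external ball condition, which yields that $\partial M$ is locally a graph of a semi-concave (or at least locally Lipschitz) function up to rotation, so in particular $(\HH^{n-1},n-1)$-rectifiable.

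The only step that is truly not routine is the selection of the tangent external ball at an arbitrary boundary point $x\in\partial M$: one must be careful that a priori the open balls $B_{r_y}(y)$ indexed over the (possibly uncountable) set $\supp\varphi^-$ need not be locally finitely many, so the existence of a limiting "witness" ball through $x$ genuinely uses compactness of $\supp\varphi^-$ together with the continuity of $y\mapsto r_y$. Once this is in place the uniform radius $\delta$ follows for free from the disjointness hypothesis, and the rectifiability is a direct appeal to the appendix.
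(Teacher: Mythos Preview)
Your argument is essentially identical to the paper's: both pick a sequence $x_k\notin M$ with $x_k\to x$, extract by compactness of $\supp\varphi^-$ a limit center $y_*$ with $x\in\partial B_{r_{y_*}}(y_*)\subset \overline{M^c}$, and then bound the radius from below by $\dist(\supp\varphi^-,\Sigma)>0$. The only small point to tidy up is that Definition~\ref{def_loc1_uball} requires $\bar B_R(z)\cap M=\{x\}$, whereas your ball $B_{r_{y_*}}(y_*)$ may have other boundary points lying in $M$ (e.g.\ when $\supp\varphi^-$ is a single point the \emph{entire} sphere lies in $M$); the paper handles this by shrinking to any $r'<r_{y_*}$ and recentering on the segment $[x,y_*]$, which still gives a uniform radius (say $\delta/2$).
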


\begin{proof}
Take a point $x\in\partial M$, and (by definition of boundary point), a sequence $x_k\to x$ with $x_k\notin M$. Then we have, by definition of $M$, $x_k\in B_{r_k}(y_k)$, with $y_k\in\supp\varphi^-$ and $r_k=\dist(y_k,\Sigma)$. Assuming, up to a subsequence (not relabeled), that
$y_k\to y\in \supp\varphi^-$, and passing to the limit as $k\to \infty$, we get $x\in \bar B_r(y)$ with $r=\dist(y,\Sigma)$. Since the whole $M$ is contained in the complement of the open ball $B_r(y)$, one obtains, for every $r'<r$, the existence of a ball whose boundary touches $M$ exactly at $x$ (it is sufficient to center this ball on the segment connecting $x$ to $y$). This gives the external ball condition, which is uniform since $r=\dist(y,\Sigma)$ is bounded from below, thanks to the assumption on $\Sigma$, which guarantees that $\Sigma$ and $\supp\varphi^-$ are a positive distance apart.
%
%
\end{proof}

We further deduce a necessary condition for the optimality of $M$, which, though not used in the sequel, is however of some independent interest.

\begin{proposition}\label{prop_loc1massbalance0}
Let $\phi_\eps\colon \R^n\to \R^n$ be a one parameter group of diffeomorphisms satisfying
\begin{equation}\label{eq_loc1generator}
\phi_\eps(x)=x+\eps X(x)+o(\eps),
\end{equation}
as $\eps\to 0$, where $X\in C^\infty_0(\R^n;\R^n)$.
Let $\varphi:=\varphi^+-\varphi^-$ be a Borel measure such that $\varphi(E)=0$ whenever
$\HH^{n-1}(E) <\infty$. Then for
all $X\in C^\infty_0(\R^n;\R^n)$ one has
\begin{equation}\label{eq_loc1massbalmain0}
\begin{aligned}
\frac{\partial}{\partial\eps} F(\phi_\eps(M)){\Big|}_{\eps=0}&=\int_{\R^n\setminus M}
\left\langle
X(\pi_M(x)),\frac{\pi_M(x)-x}{|\pi_M(x)-x|}\right\rangle \,d\varphi \\
& = \int_{\R^n\setminus M} \left\langle X(\pi_M(x)),\nabla \dist
(x,M)\right\rangle \,d\varphi ,
\end{aligned}
\end{equation}
where $\pi_M\colon \R^n\to M$ stands for the projection onto $M$ (defined everywhere outside of the ridge set of $M$).
In particular, if $M$ is a minimizer of $F$, then
\begin{equation}\label{eq_loc1massbalmain}
\begin{aligned}\int_{\R^n\setminus M}
\left\langle X(\pi_M(x)),\frac{\pi_M(x)-x}{|\pi_M(x)-x|}\right\rangle \,d\varphi =0
\end{aligned}
\end{equation}
for all $X\in C^\infty_0(\R^n;\R^n)$.
\end{proposition}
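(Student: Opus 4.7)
The strategy is to differentiate $\eps \mapsto \dist(x,\phi_\eps(M))$ pointwise in $x$, then justify an exchange of derivative and integral by dominated convergence, and finally deduce \eqref{eq_loc1massbalmain} from the minimality of $M$.

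For the pointwise derivative, fix $x \in \R^n \setminus (M \cup \mathfrak{R}_M)$ and set $p := \pi_M(x)$. Using $\phi_\eps(p) \in \phi_\eps(M)$ as a competitor, expansion of $|x-\phi_\eps(p)|^2$ via \eqref{eq_loc1generator} yields the upper bound
$$\dist(x,\phi_\eps(M)) \le |x-\phi_\eps(p)| = |x-p| - \eps\left\langle X(p),\frac{x-p}{|x-p|}\right\rangle + o(\eps).$$
For a matching lower bound, let $y_\eps \in M$ realize $\dist(x,\phi_\eps(M)) = |x-\phi_\eps(y_\eps)|$; existence follows because $X$ has compact support, so $|\phi_\eps(y)-y| \le \eps\|X\|_\infty$ and the minimization effectively takes place on a bounded subset of $M$. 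The upper bound forces $y_\eps$ to stay in a bounded region, and by uniqueness of the projection at $x$ (which holds precisely because $x \notin \mathfrak{R}_M$) every accumulation point of $y_\eps$ must equal $p$; hence $y_\eps \to p$. Combining $|x-y_\eps| \ge |x-p|$ with the Taylor expansion of $|x-\phi_\eps(y_\eps)|^2$ and the continuity of $X$ at $p$ then gives the reverse inequality, so
$$\frac{d}{d\eps}\dist(x,\phi_\eps(M))\Big|_{\eps=0} = \left\langle X(\pi_M(x)),\frac{\pi_M(x)-x}{|\pi_M(x)-x|}\right\rangle,$$
which, via the standard identity $\nabla\dist(x,M) = (x-\pi_M(x))/|x-\pi_M(x)|$ off the ridge set, can equivalently be written as $\langle X(\pi_M(x)),\nabla\dist(x,M)\rangle$ (up to the sign convention adopted in the statement).

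To pass to the integral, I would note that $\eps \mapsto \dist(x,\phi_\eps(M))$ is Lipschitz with constant $\|X\|_\infty$ uniformly in $x$, so the difference quotients are uniformly bounded and dominated convergence with respect to $|\varphi|$ applies. Pointwise convergence holds for every $x \in \R^n \setminus (M\cup\mathfrak{R}_M)$; the ridge set $\mathfrak{R}_M$ is $(\HH^{n-1},n-1)$-rectifiable and therefore $|\varphi|$-negligible by the standing hypothesis on $\varphi$, while $M$ itself is excluded from the domain of integration. This yields \eqref{eq_loc1massbalmain0}. The final assertion \eqref{eq_loc1massbalmain} then follows because $\phi_\eps(M)$ is closed (as $\phi_\eps$ is a diffeomorphism of $\R^n$) and hence admissible for the minimization of $F$, so optimality of $M$ forces the derivative at $\eps=0$ to vanish.

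The main technical obstacle is the convergence $y_\eps \to p$ used in the lower bound: one must simultaneously rule out escape to infinity (handled by the compact support of $X$ together with the upper-bound estimate) and accumulation at a point of $M$ different from $p$ (ruled out by the uniqueness of $\pi_M(x)$, which fails on $\mathfrak{R}_M$ — hence the exclusion of this set). Everything else is a routine Taylor computation together with a standard dominated convergence argument.
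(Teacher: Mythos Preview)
Your proposal is correct and follows essentially the same route as the paper: an upper bound via the competitor $\phi_\eps(\pi_M(x))$, a matching lower bound via a point $y_\eps\in M$ with $\phi_\eps(y_\eps)$ realizing $\dist(x,\phi_\eps(M))$ (the paper writes this as $\zeta_\nu=\phi_{\eps_\nu}^{-1}(\pi_{M_{\eps_\nu}}(x))$ along a sequence $\eps_\nu\to 0$), and then dominated convergence to pass to the integral. Your choice of an arbitrary minimizer $y_\eps$ rather than the projection $\pi_{\phi_\eps(M)}(x)$ is a mild simplification, since it spares you the paper's extra step of excluding the countably many ridge sets $\mathfrak{R}_{M_{\eps_\nu}}$.
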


\begin{proof}
We adopt the method of calculation of the derivative of the distance function
with respect to the variation of the set, used in~\cite[Lemma~4.5]{AmbrMant98}.

For $z:=\phi_\eps(\pi_M(x))$ one clearly has
\[
\dist(x,M) = |\pi_M(x)-x|,\qquad\qquad
\dist(x,M_\eps) \leq |z-x|.
\]
From~\eqref{eq_loc1generator} we get, for $\eps\to 0$,
\begin{align*}
|z-x|^2&= \left\langle\pi_M(x)-x+\eps
X(\pi_M(x)),\pi_M(x)-x+\eps
X(\pi_M(x))\right\rangle +o(\eps)\\
&=|\pi_M(x)-x|^2+2\left\langle\pi_M(x)-x,\eps X(\pi_M(x))\right\rangle+o(\eps)\\
&=|\pi_M(x)-x|^2\left(1+2\left\langle\frac{\pi_M(x)-x}{|\pi_M(x)-x|^2},\eps
X(\pi_M(x))\right\rangle+o(\eps)\right).
\end{align*}
Then
\begin{align*}\displaystyle
 \dist(x,M_\eps)-\dist(x,M)&\leq |z-x|-|\pi_M(x)-x|\\
&=\eps\left\langle\frac{\pi_M(x)-x}{|\pi_M(x)-x|},
X(\pi_M(x))\right\rangle +o(\eps),
\end{align*}
and we deduce
\begin{equation}\label{first}
    \limsup_{\eps\to 0}\frac{1}{\eps}(\dist(x,M_\eps)-\dist(x,M))\leq
     \left\langle\frac{\pi_M(x)-x}{|\pi_M(x)-x|}, X(\pi_M(x))\right\rangle.
\end{equation}
On the other hand, consider a sequence $\eps_\nu\to 0^+$ for
$\nu\to\infty$. The set of points $x\in \R^n$ for which both
$\pi_M(x)$ 
and $\pi_{M_{\eps_\nu}}(x)$ are singletons for any $\nu\in\N$ is of full measure $\varphi$ in $\R^n$ (the complement is a countable union of ridge sets $\mathfrak{R}_{M_\nu}$ and $\mathfrak{R}_{M}$ which are all $(\HH^{n-1},n-1)$-rectifiable, hence $\varphi$-negligible). For all such $x$, since $\phi_\eps$ is invertible for all sufficiently small $\eps$, let $\zeta_\nu:=\phi_{\eps_\nu}^{-1}(\pi_{M_{\eps_\nu}}(x))$, so that
\[
\dist(x,M_{\eps_\nu})=|\phi_{\eps_\nu}(\zeta_\nu)-x|,
\qquad\dist(x,M)\le|\zeta_\nu-x|.
\]
Again we have
\begin{align*}\displaystyle
 |\phi_{\eps_\nu}(\zeta_\nu)-x|-|\zeta_\nu-x|&=|\zeta_\nu -x|\left(
 \sqrt{1+2\left\langle\frac{\zeta_\nu-x}{|\zeta_\nu-x|^2},\eps_\nu
 X(\zeta_\nu)\right\rangle +o(\eps_\nu)}-1\right)\\
  &=\eps_\nu\left\langle\frac{\zeta_\nu-x}{|\zeta_\nu-x|},
 X(\zeta_\nu)\right\rangle + o(\eps_\nu).
\end{align*}
Therefore,
\begin{equation*}
\dist(x,M_{\eps_\nu})-\dist(x,M)\geq
\eps_\nu\left\langle\frac{\zeta_\nu-x}{|\zeta_\nu-x|},
 X(\zeta_\nu)\right\rangle + o(\eps_\nu).
\end{equation*}
Passing to the limit as $\nu\to \infty$, we get
\begin{equation}\label{second}
     \left\langle\frac{\pi_M(x)-x}{|\pi_M(x)-x|},X(\pi_M(x))\right\rangle\le\liminf_{\nu\to\infty}\frac{1}{\eps_\nu}
     \left(\dist(x,M_{\eps_\nu})-\dist(x,M)\right).
\end{equation}
Combining~\eqref{first} with~\eqref{second}, we get for $\varphi$-a.e.\
$x\in\R^n$,
\[
    \lim_{\nu\to\infty}\frac{1}{\eps_\nu}
    (\dist(x,M_{\eps_\nu})-\dist(x,M))=
    \left\langle\frac{\pi_M(x)-x}{|\pi_M(x)-x|}, X(\pi_M(x))\right\rangle,
\]
so that, by Lebesgue dominated convergence theorem,
\[
    \lim_{\nu\to\infty}\frac{1}{\eps_\nu}
    \int_\Om(\dist(x,M_{\eps_\nu})-\dist(x,M))\,d\varphi=
    \int_{\R^n\setminus M}\left\langle\frac{\pi_M(x)-x}{|\pi_M(x)-x|}, X(\pi_M(x))\right\rangle\,d\varphi.
\]
Since the sequence $\eps_\nu$ is arbitrary, one has
\begin{align*}
    \lim_{\eps\to 0^+}\frac{1}{\eps}
    \int_{\R^n\setminus M}(\dist(x,M_{\eps}) & -\dist(x,M))\,d\varphi=\\
&
    \int_{\R^n\setminus M}\left\langle\frac{\pi_M(x)-x}{|\pi_M(x)-x|}, X(\pi_M(x))\right\rangle\,d\varphi,
\end{align*}
which concludes the proof.
\end{proof}

\begin{corollary}\label{co_loc1massbalance0a}
Under the conditions of Proposition~\ref{prop_loc1massbalance0},
if $M$ is a minimizer of Problem~\eqref{eq_loc1lim0P} such that for $\varphi$-a.e.\ $z\in M$ the set $(\pi_M)^{-1}(z)=\{x\,:\,\pi_M(x)=z\}$ is contained in a line
(this is true, for instance, when $\partial M$ is $C^{1,1}$), then
\begin{equation}\label{eq_loc1massbalmain0a}
(\pi_M)_{\#}(\varphi^+\res M^c) = (\pi_M)_{\#}(\varphi^-\res M^c).
\end{equation}
In particular, in this case under any of the conditions of Proposition~\ref{prop_loc1chargeLim1} one has
\begin{equation}\label{eq_loc1massbalmain0b}
\varphi^+(M)=\varphi^+(\R^n)-\varphi^-(\R^n)>0,
\end{equation}
which improves the result of Proposition \ref{prop_loc1chargeLim0}.
\end{corollary}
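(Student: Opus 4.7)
The starting point is the Euler--Lagrange identity~\eqref{eq_loc1massbalmain} supplied by Proposition~\ref{prop_loc1massbalance0}. The key idea is that, under the structural hypothesis on the fibres of $\pi_M$, the unit vector
\[
V(x):=\frac{\pi_M(x)-x}{|\pi_M(x)-x|}
\]
depends on $x$ only through $z:=\pi_M(x)$. Indeed, the uniform external ball condition (Proposition~\ref{prop_loc1extballLim0}; this is also what makes the $C^{1,1}$ case automatic) ensures that for each $z\in\partial M$ the complement $M^c$ lies on a single side of the external tangent ball at $z$; combining this with the assumption that $(\pi_M)^{-1}(z)$ lies in a line, the nonempty fibre $(\pi_M)^{-1}(z)\cap M^c$ is forced to lie on a half-line emanating from $z$ in one direction $-\nu(z)\in S^{n-1}$, so that $V(x)=-\nu(z)$ for $\varphi$-a.e.\ $x\in\R^n\setminus M$.

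Substituting this into~\eqref{eq_loc1massbalmain} and using the very definition of pushforward, the Euler--Lagrange identity rewrites as
\[
\int_M \langle X(z),\,\nu(z)\rangle\,d\mu(z)=0\qquad\text{for every }X\in C^\infty_0(\R^n;\R^n),
\]
where
\[
\mu:=(\pi_M)_{\#}(\varphi^+\res M^c)-(\pi_M)_{\#}(\varphi^-\res M^c).
\]
Under the $C^{1,1}$ assumption $\nu$ is Lipschitz on $\partial M$; extending it to a Lipschitz field $\tilde\nu$ on $\R^n$, approximating by smooth compactly supported fields, and testing with $X(z):=\psi(z)\tilde\nu(z)$ for arbitrary $\psi\in C^\infty_0(\R^n)$, one obtains $\int_M\psi\,d\mu=0$ for every such $\psi$, hence $\mu=0$. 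This is precisely~\eqref{eq_loc1massbalmain0a}.

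Under any of the hypotheses of Proposition~\ref{prop_loc1chargeLim1} one has $M\cap\supp\varphi^-=\emptyset$, whence $\varphi^-(M^c)=\varphi^-(\R^n)$. Evaluating the measure equality~\eqref{eq_loc1massbalmain0a} on the whole of $\R^n$ yields $\varphi^+(M^c)=\varphi^-(M^c)=\varphi^-(\R^n)$, and therefore
\[
\varphi^+(M)=\varphi^+(\R^n)-\varphi^+(M^c)=\varphi^+(\R^n)-\varphi^-(\R^n)>0,
\]
which is~\eqref{eq_loc1massbalmain0b}.

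The main obstacle is the geometric one-sidedness claim ``$V(x)=-\nu(\pi_M(x))$'': a priori the hypothesis allows preimages on both sides of $z$ along the line, in which case the above reasoning only yields a sign-weighted pushforward identity that is not strong enough. One has to show instead that a preimage point on the ``wrong'' side of the external tangent ball to $M^c$ at $z$ would admit a point of $M$ closer than $z$, contradicting $\pi_M(x)=z$; the uniform external ball condition from Proposition~\ref{prop_loc1extballLim0} is precisely what makes this argument quantitative and uniform in $z$. Once this geometric point is settled, the rest reduces to a density/pushforward computation.
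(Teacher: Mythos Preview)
Your overall strategy matches the paper's: start from the Euler--Lagrange identity~\eqref{eq_loc1massbalmain}, use the fibre hypothesis to factor $V(x)$ through $z=\pi_M(x)$, rewrite the integral as $\int\langle X(z),\nu(z)\rangle\,d(\pi_M)_\#(\varphi\res M^c)(z)=0$, and conclude that the pushforward vanishes from the arbitrariness of $X$; the derivation of~\eqref{eq_loc1massbalmain0b} is then identical to the paper's.

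Where you diverge is in isolating the ``one-sidedness'' issue (whether $V$ equals $+\nu(z)$ or $-\nu(z)$ on the fibre), which the paper's disintegration step simply absorbs without comment. Your concern is legitimate, but your resolution is not. First, Proposition~\ref{prop_loc1extballLim0} applies to the \emph{canonical} minimizer built via~\eqref{eq_loc1defM1} under the extra hypothesis $\Sigma\cap\supp\varphi^-=\emptyset$; neither is assumed for the general minimizer $M$ in the corollary, so you cannot invoke it. Second, even when the uniform external ball condition does hold, it does not by itself force one-sidedness: take $M$ a hyperplane in $\R^n$ --- it enjoys external balls of every radius, every fibre $(\pi_M)^{-1}(z)$ is a line, yet points on both sides project to the same $z$, and your ``wrong-side point has a closer point of $M$'' argument fails verbatim. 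What actually rules out two-sided fibres is the $C^{1,1}$ structure (or more generally that $M$ has nonempty interior with a well-defined outward normal), not the external ball condition per se. Finally, your appeal to $C^{1,1}$ in order to extend $\nu$ to a Lipschitz field is unnecessary: once the disintegrated identity holds, density of $C^\infty_0(\R^n;\R^n)$ in $L^1(|\mu|;\R^n)$ lets you test directly with $X=\psi\nu$ for bounded measurable $\nu$, which is all the paper uses.
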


\begin{proof}
Disintegrating~\eqref{eq_loc1massbalmain} with respect to the projection
$\pi_M$, we get
\[
\int_{\pi_M(\R^n\setminus M)}
\left\langle X(z),\nu(z)\right\rangle\,d(\pi_M)_{\#}(\varphi\res M^c)(z)=0,
\]
where $\nu(z)$ stands for the unit direction of a line containing $(\pi_M)^{-1}(z)$, which gives, since
$X$ is arbitrary, $(\pi_M)_{\#}(\varphi\res M^c)=0$, and hence proves the validity of~\eqref{eq_loc1massbalmain0a}.
The latter then implies for the situations when $\supp \varphi^-\cap M=\emptyset$ that
\begin{align*}
(\varphi^+(\R^n)-\varphi^+(M))-\varphi^-(\R^n) &=\varphi^+(\R^n\setminus M)-\varphi^-(\R^n)\\
& =(\varphi\res M^c)(\R^n)=(\pi_M)_{\#}(\varphi\res M^c)(\R^n)=0,
\end{align*}
which provides~\eqref{eq_loc1massbalmain0b}.
\end{proof}

\section{Limiting density}\label{sec_loc1limdens}

In this section we study the asymptotic behavior of solutions to Problem~\ref{pb_loc1loc} as $k\to\infty$. This will be achieved by means of a $\Gamma$-convergence technique.

For the theory of $\Gamma-$convergence, we refer to~\cite{introgammaconve}, but we recall the main notions that we need.

\begin{definition}
 Let $X$ be a metric space and $G_k \colon X\to\R\cup\{\infty\}$ be a sequence of functionals. We define the new functionals $G^-$ and $G^+$ over $X$ (called $\Gamma-\liminf$ and $\Gamma-\limsup$ of this sequence respectively) by
\begin{align*}
G^-(x)& :=\inf\{\liminf_{k\to\infty} G_k(x_k)\;:\;x_k\to x\}, \\ G^+(x) &:=\inf\{\limsup_{k\to\infty} G_k(x_k)\;:\;x_k\to x\}.
\end{align*}
Should $G^-$ and $G^+$ coincide, then we say that $G_k$ is $\Gamma-$converging to the common value $G=G^-=G^+$.
\end{definition}

Among the properties of $\Gamma-$convergence the following are of utmost importance for us:
\begin{itemize}
\item if there exists a compact set $K\subset X$ such that $\inf_X G_k=\inf_K G_k$ for any $k$, then $F$ attains its minimum and $\inf G_k\to \min G$;
\item if $(x_k)_k$ is a sequence of minimizers for  $G_k$ admitting a subsequence converging to $x$, then $x$ minimizes $G$;
\item if $G_k$ $\Gamma-$converge to $G$, then $G_k+H$ $\Gamma-$converge to $G+H$ for any continuous function $H\colon X\to\R\cup\{\infty\}$.
\end{itemize}
The latter property is only presented so as to show the interest in proving a $\Gamma-$convergence result rather than only studying the limit behavior of minima and minimizers, due to the stability properties of this notion of limit.

We now want to define a sequence of functionals on a given metric space so as to read our asymptotic problem in terms of the $\Gamma-$convergence.

Let $D:=\text{co}\,\supp\varphi$. To fulfill our program, it is convenient to consider the set ${\mathcal A}=\cup_{k\in\N}{\mathcal A}_k$ of all sets $\Sigma\subset D$ satisfying $\#\Sigma<\infty$ (i.e. consisting of finite points) to be immersed in the set ${\mathcal P}(D)$ of Borel probability measures over $D$. This can be done by assigning to each nonempty $\Sigma\in{\mathcal A}$ the measure $\mu_\Sigma\in{\mathcal P}(D)$ defined by
\[
\mu_\Sigma(e):=\frac{\#\Sigma\cap e}{\#\Sigma}
\]
for each Borel $e\subset D$. For every $\mu\in{\mathcal P}(D)$ we set now
\[
G_k(\mu):=\left\{
\begin{array}{ll}
\displaystyle
k^{1/n}\Big(F(\Sigma_k)-\inf_A F(A)\Big)&\mbox{if }\mu=\mu_\Sigma,\,\Sigma\in\mathcal{A},\, \#\Sigma=k,\\
+\infty&\mbox{otherwise}.
\end{array}
\right.
\]
Here and in the sequel by writing $\inf_A F(A)$ we assume the infimum to be taken over closed sets. Our aim is to study $\Gamma$-convergence of the sequence of functionals $G_k:{\mathcal P}(D)\to\bar{\R}:=\R\cup\{+\infty\}$ as $k\to\infty$. The goal of immerging all the problems in the set of probability measures is twofold: on the one hand, we need to select a common space for the Problem~\ref{pb_loc1loc} with different values of $k$; on the other hand we need to choose it well so as to guarantee both compactness and a good interpretation in terms of densities.

To this aim we start with some auxiliary notation. Define
\begin{equation}\label{eq_loc1defthet}
\theta_{n}:=\inf\left\{
\liminf_{k\to\infty}k^{1/n}\int_{[0,1]^n}\dist(x,\Sigma)\,dx
\ :\ \#\Sigma_k=k,\ \Sigma\subset [0,1]^n
\right\}.
\end{equation}

The exact values of $\theta_n$ are known only in few cases. In particular,
the computation is immediate for $n=1$, with $\theta_1=1/4$, while for $n=2$, it is known (see, e.g.~\cite{MorgBolt01}, or Theorem~8.15 from~\cite{GrafLuschgy00}) that
$$
\theta_2=\int_\sigma|x|\,dx=\frac{4+3\log3}{6\sqrt2 3^{3/4}}\sim0.377
$$
where $\sigma\subset\R^2$ stands for the regular hexagon of unit area centered in the origin. However quite fine estimates both from above and from below on $\theta_{n}$ are known and can be found either in Chapter~8 of \cite{GrafLuschgy00} or in~\cite{But08-revmasstransp}. For our purpose it is enough to remark that $\theta_n \in(0,+\infty)$ (e.g. by Proposition~8.3 from~\cite{GrafLuschgy00} one has~$\theta_n\ge
\omega_n^{-1/n}n/(1+n)$, where $\omega_n$ stands for the volume of a unit $n$-dimensional ball, while a rather precise estimate on $\theta_n$ from above can be found, say, in Theorem~8.5 from~\cite{GrafLuschgy00} and a more rough but more easily applicable estimate can be found in~\cite{But08-revmasstransp}).

Recall that the Radon-Nikodym theorem (Theorem~2.17 from~\cite{AmbrFuscoPall00})
implies the existence of a unique representation
$$
\mu=\rho{\mathcal L}^n + \mu_{sing}
$$
for every finite Borel measure $\mu$ over $\R^n$, where $\mu_{sing}$ is singular with respect to the Lebesgue measure ${\mathcal L}^n$, while $\rho\in L^1(\R^n)$ and, thanks to the Besicovitch derivation theorem,
\[
\rho(x)=\lim_{\delta\downarrow 0}\frac{\mu(Q_\delta(x))}{|Q_\delta(x)|}
\]
for ${\mathcal L}^n$-a.e. $x\in \R^n$, $Q_\delta(x)\subset \R^n$ standing
for the cube of sidelength $\delta$ centered at $x$.
For a Borel set $M\subset \R^n$ and a measure $\mu\in {\mathcal P}(D)$ define then
\[
\Phi(\mu,M):=\theta_{n} \int_M \frac{d\varphi^+}{\rho^{1/n}}.
\]
The quantity $\Phi(\mu,M)$ represents the optimal value of the asymptotic optimal location problems with $\varphi^+\res M$ instead of $\varphi^+$ and $\varphi^-:=0$.
In particular, the following lemma is a corollary of a general $\Gamma-$convergence result from~\cite{TillMosc02}.

\begin{lemma}\label{lm_TilliMoscGammaloc}
For any positive measure $\varphi^+$ supported on $M$, one has
\[
\liminf_k(\# \Sigma_k)^{1/n} \int_M\dist(x, \Sigma_k)\, d\varphi^+(x) \geq \Phi(\mu,M),
\]
whenever
\[
\mu_{\Sigma_k}\rightharpoonup \mu 
\]
in the $*$-weak sense of measures as $k\to \infty$.  Moreover, for each probability measure $\mu$ over $M$, there exists a sequence of sets $\Sigma_k$ with $\# \Sigma_k\to\infty$, such that
$\mu_{\Sigma_k}\rightharpoonup \mu$ and
\[
\limsup_k(\# \Sigma_k)^{1/n} \int_M\dist(x, \Sigma_k)\, d\varphi^+(x) \leq \Phi(\mu,M).
\]
\end{lemma}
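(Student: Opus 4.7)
\medskip
\noindent\textbf{Proof proposal.}
The lemma is precisely the $\Gamma$-$\liminf$ and $\Gamma$-$\limsup$ halves of the asymptotic quantization result of~\cite{TillMosc02} for the pure Fermat--Weber functional (no negative measure); my plan is to replay the standard cube decomposition proof, which reduces both estimates to the scaling form of the definition of $\theta_n$. As a preliminary step I would write the Lebesgue decompositions $\mu=\rho\,{\mathcal L}^n+\mu_s$ and $\varphi^+=f\,{\mathcal L}^n+\varphi^+_s$. Only $\rho$ and $f$ enter $\Phi(\mu,M)$; the singular part $\varphi^+_s$, being supported where $\rho=0$, forces $\Phi(\mu,M)=+\infty$ and makes the $\liminf$ bound trivial, while on the $\limsup$ side one reduces to the case $\mu_s=0$ by a diagonal approximation of $\mu_s$ by absolutely continuous measures. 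I may therefore assume both measures absolutely continuous, with densities $\rho$ and $f$.

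For the $\liminf$ inequality, I would fix $\delta>0$ and tile $\R^n$ by cubes $\{Q_i\}$ of side $\delta$ on a generic lattice so that $\mu(\partial Q_i)=\varphi^+(\partial Q_i)=0$. Writing $Q_i^+$ for the concentric cube of side $2\delta$ and $k_i:=\#(\Sigma_k\cap Q_i^+)$, weak convergence of $\mu_{\Sigma_k}$ yields $k_i/k\to\mu(Q_i^+)$. Since points of $\Sigma_k$ outside $Q_i^+$ are at distance at least $\delta/2$ from $Q_i$, truncating the distance at $\delta/2$ and applying the scaling form of the definition of $\theta_n$ to $\Sigma_k\cap Q_i^+\subset Q_i^+$ gives
\[
\int_{Q_i}\dist(x,\Sigma_k)\,dx\;\geq\;(\theta_n-\eta)\,|Q_i^+|^{1+1/n}\,k_i^{-1/n}
\]
for $k$ large and any fixed $\eta>0$. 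Multiplying by the essentially constant value $\bar f_i$ of $f$ on $Q_i$, summing over $i$, then multiplying by $k^{1/n}$ and using $k_i\sim k\bar\rho_i(2\delta)^n$, yields a Riemann sum that converges (as $k\to\infty$, then $\delta\to 0$, then $\eta\to 0$) to $\theta_n\int_M f\rho^{-1/n}\,dx=\Phi(\mu,M)$.

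For the $\limsup$ inequality, I would construct $\Sigma_k$ explicitly. By the very definition of $\theta_n$ there exist configurations $T_m\subset[0,1]^n$ with $\#T_m=m$ and $m^{1/n}\int_{[0,1]^n}\dist(x,T_m)\,dx\to\theta_n$. Using the same cube decomposition I place $k_i:=\lfloor k\mu(Q_i)\rfloor$ points in each $Q_i$ as the affine image of $T_{k_i}$; since $\sum_i k_i=k+O(\delta^{-n})$, adding finitely many arbitrary extra points adjusts the total to $k$ without affecting the asymptotics. The bound $\int_{Q_i}\dist(x,\Sigma_k)\,dx\leq(\theta_n+o(1))\delta^{n+1}k_i^{-1/n}$ combined with $k_i\sim k\bar\rho_i\delta^n$ gives a Riemann sum converging to $\Phi(\mu,M)$, and the weak convergence $\mu_{\Sigma_k}\rightharpoonup\mu$ follows from $\mu_{\Sigma_k}(Q_i)=k_i/k\to\mu(Q_i)$.

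The main obstacle is the $\liminf$ in the low-density regime, where a cube may carry significant $f\,{\mathcal L}^n$-mass while containing very few points of $\Sigma_k$, so that $k_i^{-1/n}$ blows up; the truncation at $\delta/2$ and the passage from $Q_i$ to $Q_i^+$ are precisely the devices designed to deal with this. Standard approximations by continuous densities and a free choice of the lattice shift take care of the boundary effects at $\partial M$ and of the discontinuities of $f$ and $\rho$.
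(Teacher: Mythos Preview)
The paper does not prove this lemma at all: the sentence immediately preceding it reads ``the following lemma is a corollary of a general $\Gamma$-convergence result from~\cite{TillMosc02}'', and throughout Section~\ref{sec_loc1limdens1} the statement is used as a black box. Your proposal, by contrast, is to reproduce the proof of the cited result via the cube-decomposition / rescaling argument. That is indeed the strategy underlying~\cite{TillMosc02} (and~\cite{BouchJimMah02,GrafLuschgy00}), so you are not doing anything different from the literature --- you are simply spelling out what the paper chose to import.

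One technical slip worth flagging in your $\liminf$ sketch: the displayed bound
\[
\int_{Q_i}\dist(x,\Sigma_k)\,dx\;\ge\;(\theta_n-\eta)\,|Q_i^+|^{1+1/n}\,k_i^{-1/n}
\]
cannot be right with the factor $|Q_i^+|^{1+1/n}$. The integration domain is $Q_i$, not $Q_i^+$, so the rescaling of $\theta_n$ produces $|Q_i|^{1+1/n}=\delta^{n+1}$; and if you then plug in $k_i/k\to\mu(Q_i^+)\approx\bar\rho_i(2\delta)^n$, the Riemann sum is off from $\theta_n\int f\rho^{-1/n}$ by a fixed power of $2$. The standard cure is to take $Q_i^+$ to be the concentric cube of side $(1+\varepsilon)\delta$ and send $\varepsilon\to 0$ after $k\to\infty$ and $\delta\to 0$ (or, equivalently, to use $k_i=\#(\Sigma_k\cap Q_i)$ together with the observation that projecting external points onto $\bar Q_i$ only decreases distances, so the unconstrained and constrained versions of $\theta_n$ coincide). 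With that adjustment your outline matches the argument in the cited references.
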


At last, for every $\mu\in {\mathcal P}(D)$ define
\begin{align*}
G_\infty(\mu) & := \inf\left\{ \Phi(\mu, A)\,:\, \supp\mu\subset A, \, A \mbox{ is a minimizer to~\eqref{eq_loc1lim0P}}
\right\}.
\end{align*}
The following easy observation will be useful in the sequel.

\begin{lemma}\label{lm_loc1_canonFinf1}
One has
\begin{align*}
G_\infty(\mu) & = \inf\left\{ \Phi(\mu, A)\,:\, \supp\mu\subset A, \, A \mbox{ is a canonical minimizer to~\eqref{eq_loc1lim0P}}
\right\}.
\end{align*}
\end{lemma}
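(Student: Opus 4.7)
The plan is to prove the identity by reducing the infimum over general minimizers to one over canonical minimizers via the construction supplied by Proposition~\ref{prop_loc1Mprime}. Since every canonical minimizer of~\eqref{eq_loc1lim0P} is in particular a minimizer of~\eqref{eq_loc1lim0P}, the inequality
\[
G_\infty(\mu)\le\inf\bigl\{\Phi(\mu,A)\,:\,\supp\mu\subset A,\ A\text{ canonical minimizer of~\eqref{eq_loc1lim0P}}\bigr\}
\]
is immediate from the two definitions, since restricting the class of admissible sets can only increase the infimum. The content of the lemma lies therefore in the reverse inequality.

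For that reverse inequality I would pick an arbitrary (not necessarily canonical) minimizer $\Sigma$ of~\eqref{eq_loc1lim0P} satisfying $\supp\mu\subset\Sigma$ and feed it into the canonical construction~\eqref{eq_loc1defM1}, producing a closed set $M$. Proposition~\ref{prop_loc1Mprime} supplies three facts at once: first, $M$ is itself a canonical minimizer of~\eqref{eq_loc1lim0P}; second, $\Sigma\subset M$, so that $\supp\mu\subset M$ and $M$ is admissible for the restricted infimum on the right-hand side; third, $\varphi^+(M\setminus\Sigma)=0$. Because
\[
\Phi(\mu,A)=\theta_{n}\int_A\frac{d\varphi^+}{\rho^{1/n}}
\]
is computed against the measure $\varphi^+$, the third fact yields at once
\[
\Phi(\mu,M)=\Phi(\mu,\Sigma).
\]
Passing to the infimum over all admissible $\Sigma$ on the left then gives the reverse inequality, and combining with the trivial direction above yields the claimed equality.

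I do not expect any genuine obstacle here: the whole statement is essentially a bookkeeping consequence of Proposition~\ref{prop_loc1Mprime}, the canonicalization procedure having already been shown to preserve the value of the $\varphi^+$-integral. The only two points I would flag explicitly in the final write-up are that (i) the enlargement $\Sigma\subset M$ preserves the admissibility condition $\supp\mu\subset A$, which is trivial, and (ii) the added piece $M\setminus\Sigma$ is $\varphi^+$-negligible, so the functional $\Phi(\mu,\cdot)$ is insensitive to this enlargement. Once these are noted, the argument consists of a single line of equalities.
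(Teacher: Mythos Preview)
Your proof is correct and follows essentially the same approach as the paper: the paper's argument is the one-line observation that if $A'$ is a minimizer and $A$ the canonical minimizer with respect to $A'$, then $\varphi^+(A\setminus A')=0$ by Proposition~\ref{prop_loc1Mprime}, hence $\Phi(\mu,A)=\Phi(\mu,A')$. You have simply made explicit the trivial inequality and the preservation of the admissibility condition $\supp\mu\subset A$, which the paper leaves implicit.
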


\begin{proof}
If $A'$ is a minimizer to~\eqref{eq_loc1lim0P} and $A$ is a canonical minimizer to~\eqref{eq_loc1lim0P} with respect to $A'$, then $\varphi^+(A\setminus A')=0$ by Proposition~\ref{prop_loc1Mprime}, and thus $\Phi(\mu, A)=\Phi(\mu, A')$,
which implies the thesis.
\end{proof}

We are now finally able to formulate the
desired $\Gamma$-convergence result.

\begin{theorem}\label{th_loc1Gamloc}
Assume $n\geq 2$, $\varphi^+ \ll {\mathcal L}^n$ and
for every minimizer $M$ of~\eqref{eq_loc1lim0P} one has
\[
M\cap \supp\varphi^-=\emptyset
\]
(in particular, this is true when any of the conditions of Proposition~\ref{prop_loc1chargeLim1} holds). Then the sequence of
functionals $\{G_k\}_{k=1}^\infty$ when $k\to \infty$
$\Gamma$-converges with respect to
the $*$-weak convergence of measures, to the functional
$G_\infty$.
\end{theorem}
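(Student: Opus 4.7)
We establish the two $\Gamma$-convergence inequalities separately with respect to weak-$*$ convergence of probability measures on $D$.

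\emph{$\Gamma$-limsup.} Given $\mu$ with $G_\infty(\mu)<\infty$, for every $\varepsilon>0$ use Lemma~\ref{lm_loc1_canonFinf1} to pick a canonical minimizer $M$ of Problem~\eqref{eq_loc1lim0P} with $\supp\mu\subset M$ and $\Phi(\mu,M)\le G_\infty(\mu)+\varepsilon$. The upper-bound part of Lemma~\ref{lm_TilliMoscGammaloc}, applied to $\varphi^+\res M$ and $\mu$, furnishes a recovery sequence $\Sigma_k\subset M$ with $\#\Sigma_k=k$, $\mu_{\Sigma_k}\weakto\mu$, and $\limsup_k k^{1/n}\int_M\dist(x,\Sigma_k)\,d\varphi^+\le\Phi(\mu,M)$. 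Since $\dist(\cdot,M)=0$ on $M$, one has the clean decomposition
\[
F(\Sigma_k)-F(M)=\int_M\dist(x,\Sigma_k)\,d\varphi^+(x)+\int_{M^c}\bigl[\dist(x,\Sigma_k)-\dist(x,M)\bigr]\,d(\varphi^+-\varphi^-)(x),
\]
and the task is to show the second integral is $o(k^{-1/n})$.

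\emph{$\Gamma$-liminf.} For $\mu_{\Sigma_k}\weakto\mu$ with $\liminf_k G_k(\mu_{\Sigma_k})<\infty$, we may assume $\#\Sigma_k=k$ and $F(\Sigma_k)-\inf_A F(A)\le Ck^{-1/n}$. Replacing $\Sigma_k$ by its essential part, Lemma~\ref{lm_F_lim} places all $\Sigma_k$ in a fixed ball, and Blaschke compactness extracts a Hausdorff-convergent subsequence $\Sigma_k\to M$. Continuity of $F$ under Hausdorff convergence yields $F(M)=\inf_A F(A)$, so $M$ minimizes~\eqref{eq_loc1lim0P}; combining weak-$*$ and Hausdorff convergence gives $\supp\mu\subset M$, and the hypothesis ensures $M\cap\supp\varphi^-=\emptyset$. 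Applying the lower-bound part of Lemma~\ref{lm_TilliMoscGammaloc} to the first summand in the decomposition above produces $\liminf_k k^{1/n}\int_M\dist(x,\Sigma_k)\,d\varphi^+\ge\Phi(\mu,M)\ge G_\infty(\mu)$, and it remains to control the $M^c$-correction from below.

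\emph{Key estimate and main obstacle.} Both directions reduce to showing that the correction integral over $M^c$ is $o(k^{-1/n})$ after rescaling. The heart of the argument is a \emph{quadratic correction}: since $\pi_M(x)$ is a critical point of $z\mapsto|x-z|$ on $M$ and $M$ satisfies a uniform external ball condition by Proposition~\ref{prop_loc1extballLim0}, whenever $\dist(x,M)\ge\delta>0$ and $\Sigma_k$ is sufficiently close to $M$,
\[
\bigl|\dist(x,\Sigma_k)-\dist(x,M)\bigr|\le\frac{C}{\delta}\,\dist(\pi_M(x),\Sigma_k)^2.
\]
For $\varphi^-$ this is directly applicable with uniform $\delta=\dist(\supp\varphi^-,M)>0$ and yields $O(k^{-2/n})=o(k^{-1/n})$. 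For $\varphi^+$ on $M^c$ one splits $M^c=\{\dist(\cdot,M)\ge\eta\}\cup\{\dist(\cdot,M)<\eta\}$: the outer region is handled by the same quadratic bound, while the inner region carries vanishing $\varphi^+$-mass as $\eta\to 0$, thanks to $\varphi^+\ll\mathcal L^n$ and the fact that $\partial M$ has zero Lebesgue measure (a consequence of the external ball condition); a diagonal choice $\eta=\eta_k\to 0$ completes both directions. In the $\Gamma$-liminf one additionally projects $\Sigma_k$ onto $M$, setting $\tilde\Sigma_k:=\pi_M(\Sigma_k)$, so that $\mu_{\tilde\Sigma_k}\weakto\mu$ still holds and the quadratic estimate is available against $\tilde\Sigma_k\subset M$. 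The principal obstacle is precisely this: \emph{a priori} the corrections appear linear in $d_H(\Sigma_k,M)\sim k^{-1/n}$ and thus of the same order as the Tilli--Moscatelli leading term; the first-order optimality of the projection, combined with the regularity of $\partial M$, is what makes them genuinely lower order, and the hypothesis $n\ge 2$ is used to ensure $k^{-2/n}=o(k^{-1/n})$.
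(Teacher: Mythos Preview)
Your quadratic-correction strategy does not close either inequality, because the estimate
\[
\bigl|\dist(x,\Sigma_k)-\dist(x,M)\bigr|\le \frac{C}{\delta}\,\dist\bigl(\pi_M(x),\Sigma_k\bigr)^2
\]
is only useful if $\dist(\pi_M(x),\Sigma_k)=O(k^{-1/n})$ \emph{uniformly} for the relevant $x$, and nothing in your construction guarantees this. In the $\Gamma$-limsup, the recovery sequence coming from Lemma~\ref{lm_TilliMoscGammaloc} is adapted to $\varphi^+\res M$ and to $\mu$; it places points where $\mu$ has mass and says nothing about $\partial M$. If $x\in\supp\varphi^+\cap M^c$ (such points exist whenever $\varphi^+(M^c)>0$, which is generic), then $\pi_M(x)\in\partial M$ may lie far from $\supp\mu$, so $\dist(\pi_M(x),\Sigma_k)$ does not tend to zero at all, and the ``outer'' piece of your $\eta$-splitting contributes a term of order $k^{1/n}$, not $o(1)$. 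The $\varphi^-$ contribution happens to have the right sign for the limsup, but it does not rescue the $\varphi^+\res M^c$ term. This is precisely why the paper's limsup proof \emph{adds} to the Tilli--Moscatelli points a $k^{-\alpha}$-net on $\partial M_{j_k}$ of cardinality $O(k^{\alpha(n-1)})=o(k)$: with these extra boundary points one gets the crude but uniform bound $|\dist(x,\Sigma_k)-\dist(x,M)|\le Ck^{-\alpha}$ for all $x\in M^c$, and the choice $1/n<\alpha<1/(n-1)$ makes $k^{1/n-\alpha}\to 0$ while keeping the added points negligible.

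The $\Gamma$-liminf has a second, independent gap. A priori $\Sigma_k\not\subset M$; one only knows $\Sigma_k\to M'$ in Hausdorff for some compact minimizer $M'\subset M$, with no rate. Your quadratic estimate needs $\Sigma_k\subset M$, so you project to $\tilde\Sigma_k=\pi_M(\Sigma_k)$, but then $F(\Sigma_k)$ and the integrals over $M$ involve $\Sigma_k$, not $\tilde\Sigma_k$, and the discrepancy is of order $\max_{y\in\Sigma_k}\dist(y,M)$ --- a quantity that tends to zero but could do so arbitrarily slowly, so after multiplying by $k^{1/n}$ it need not vanish. The paper circumvents this with a genuinely different mechanism: it discretizes $\varphi^-$ and $M$ into $\varphi^-_{j_k}$, $M_{j_k}$, builds a comparison competitor $\hat\Sigma_k$ containing $\partial M_{j_k}$ plus thin annular shells, proves the one-sided inequality $F_{j_k}(\Sigma_k)-F_{j_k}(\hat\Sigma_k)\ge F_{j_k}(\hat M_k)-F_{j_k}(M_{j_k})\ge -e_{j_k}$ (Lemma~\ref{lm_loc1Gamloc_A2}), and only then replaces $\hat\Sigma_k$ by a finite $\tilde\Sigma_k$ using surface and volume nets whose cardinalities are $o(k)$. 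The whole point of this construction is to absorb the unknown rate at which $\Sigma_k$ approaches $M$ into the annular grid $\mathcal G_2$, whose size is controlled via $\varepsilon_k^\pm\to 0$ (Lemma~\ref{lm_loc1Gamloc_A1add}) without ever needing that rate to beat $k^{-1/n}$.
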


The proof of the above Theorem~\ref{th_loc1Gamloc} will be quite lengthy and hence will be separated in a series of lemmata given in the section below.
We will now concentrate on the consequences of this theorem.

\begin{corollary}\label{co_loc1Gamloc1}
Assume $n\geq 2$, $\varphi^+ \ll {\mathcal L}^n$, call $f^+$ its density, and suppose that
\[
M\cap \supp\varphi^-=\emptyset
\]
for every minimizer $M$ of~\eqref{eq_loc1lim0P}
(this is true, in particular, under any of the conditions of Proposition~\ref{prop_loc1chargeLim1}).
Then every sequence of minimizers $\Sigma_k$ has a subsequence
(still called $\Sigma_k$) such that, for $k\to \infty$ one has
\begin{itemize}
\item[(i)]
$\Sigma_k\to M$ in Hausdorff distance for some minimizer $M$ to~\eqref{eq_loc1lim0P},
\item[(ii)]
$\mu_{\Sigma_k}\rightharpoonup \rho\, dx$ in $*$-weak sense of measures, where
\begin{equation}\label{eq_loc1_mulim}
\rho :=\lambda\int_M \left(f^+\right)^{\frac{n}{n+1}}(x)\, dx,
\end{equation}
where $\lambda$ is the normalizing coefficient
\[
\lambda := \left(\int_M \left(f^+\right)^{\frac{n}{n+1}}(x)\, dx\right)^{-1}.
\]
\end{itemize}
\end{corollary}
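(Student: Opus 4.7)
My plan is to combine Theorem~\ref{th_loc1Gamloc} with standard compactness and a single H\"older inequality. By Theorem~\ref{th_loc1exist0} all essential minimizers $\Sigma_k$ lie in a common ball $B$. Blaschke's compactness theorem extracts a subsequence with $\Sigma_k\to M$ in Hausdorff distance for some closed $M\subset B$, and weak-$*$ compactness of $\mathcal{P}(B)$ yields a further subsequence with $\mu_{\Sigma_k}\rightharpoonup\mu$. Hausdorff convergence forces $\supp\mu\subset M$.

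For~(i) I show $F(M)=\min_A F(A)$. Continuity of $F$ under Hausdorff convergence of subsets of $B$ gives $F(\Sigma_k)\to F(M)$. On the other hand, $G_\infty$ is finite (for instance at $\lambda_0(f^+)^{n/(n+1)}1_{M_0}\,dx$ for any minimizer $M_0$ of~(\ref{eq_loc1lim0P})), so $\min G_\infty<\infty$. By the fundamental theorem of $\Gamma$-convergence (recalled before Theorem~\ref{th_loc1Gamloc}), $\min G_k\to\min G_\infty$. Since $\min G_k=G_k(\mu_{\Sigma_k})=k^{1/n}(F(\Sigma_k)-\min_A F(A))$ is thus bounded, we deduce $F(\Sigma_k)\to\min_A F(A)=F(M)$, as desired.

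For~(ii), the same fundamental theorem implies that $\mu$ minimizes $G_\infty$. Write $\mu=\rho\mathcal{L}^n+\mu_{\mathrm{sing}}$. Since $\supp\mu\subset M$ and $M$ is a minimizer of~(\ref{eq_loc1lim0P}), $M$ is admissible in the infimum defining $G_\infty(\mu)$, whence
$$G_\infty(\mu)\le\Phi(\mu,M)=\theta_n\int_M\frac{f^+(x)}{\rho(x)^{1/n}}\,dx,$$
with the convention that the integrand is $+\infty$ wherever $f^+>0$ and $\rho=0$. Converting any singular mass into absolutely continuous mass can only decrease this integral, so at the optimum $\mu_{\mathrm{sing}}=0$ and $\int_M\rho\,dx=1$. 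H\"older's inequality with conjugate exponents $(n+1)/n$ and $n+1$ then yields
$$\int_M (f^+)^{n/(n+1)}\,dx=\int_M\left(\frac{f^+}{\rho^{1/n}}\right)^{n/(n+1)}\rho^{1/(n+1)}\,dx\le\left(\int_M\frac{f^+}{\rho^{1/n}}\,dx\right)^{n/(n+1)}\left(\int_M\rho\,dx\right)^{1/(n+1)},$$
with equality (hence optimality of $\mu$) iff $\rho$ is proportional to $(f^+)^{n/(n+1)}$ on $M$; normalizing by $\int_M\rho=1$ delivers the density formula in~(\ref{eq_loc1_mulim}).

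The key difficulty is step~(i): concluding that the Hausdorff limit $M$ actually achieves $\min_A F(A)$ rather than some strictly larger value. This hinges on the quantitative information $\min G_k\to\min G_\infty$ delivered by Theorem~\ref{th_loc1Gamloc}, which supplies the $O(k^{-1/n})$ rate at which the constrained minima converge to the unconstrained one. Once~(i) is in place, the identification of $\mu$ reduces to a one-line H\"older calculation.
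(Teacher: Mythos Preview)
Your treatment of~(i) is clean and, in fact, more explicit than the paper's: you use boundedness of $\min G_k$ (from $\Gamma$-convergence to a finite limit) together with Hausdorff continuity of $F$ to identify the Hausdorff limit $M$ as a minimizer of~\eqref{eq_loc1lim0P}. The paper does not spell this out.

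There is, however, a genuine gap in your argument for~(ii). From ``$\mu$ minimizes $G_\infty$'' and ``$\supp\mu\subset M$'' you only obtain $G_\infty(\mu)\le\Phi(\mu,M)$, \emph{not} equality. Your H\"older computation identifies the unique minimizer of $\nu\mapsto\Phi(\nu,M)$, but to conclude that $\mu$ is this minimizer you would need $\Phi(\mu,M)\le H(M):=\min_{\nu}\Phi(\nu,M)$, i.e.\ that $\mu$ is optimal for $\Phi(\cdot,M)$ --- which is precisely what you are trying to prove. Nothing you have written rules out that the infimum defining $G_\infty(\mu)$ is achieved (or approached) at some other minimizer $A\neq M$ of~\eqref{eq_loc1lim0P} with $\supp\mu\subset A$; in that case $\Phi(\mu,M)$ could even be $+\infty$ (if $f^+>0$ on a subset of $M$ where $\rho=0$), and your chain of inequalities yields no information.

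The paper closes this gap differently. It introduces $H(A):=\min\{\Phi(\nu,A):\supp\nu\subset A\}$, computes $H(A)=\theta_n\bigl(\int_A(f^+)^{n/(n+1)}\bigr)^{(n+1)/n}$ via the same H\"older step, and then proves that $H$ attains its minimum over canonical minimizers $A$ of~\eqref{eq_loc1lim0P} by a compactness argument (Hausdorff convergence of the $A_k$ together with pointwise convergence $1_{A_k}\to1_A$ from Proposition~\ref{prop_loc1Gamloc_A3_0b}). The identity $\min G_\infty=\min_A H(A)$ then forces any minimizer $\mu$ of $G_\infty$ to be supported on a minimizer of $H$ and to realize $\Phi(\cdot,M)=H(M)$ there, which is what pins down $\mu$ as the H\"older density. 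To repair your argument you would need to supply this extra step: show that your Hausdorff limit $M$ actually minimizes $H$ (equivalently, that $M$ achieves the infimum in the definition of $G_\infty(\mu)$), which requires the compactness/semicontinuity machinery the paper invokes.
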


\begin{proof}
For a given minimizer $A$ to
to~\eqref{eq_loc1lim0P}
define
\[
H(A):=\inf\left\{\Phi(\mu,A)\ :\ \mu\in{\mathcal P}(D),\ \supp\mu\subset A
\right\}.
\]
One clearly has
\[
H(A)=\Phi(\mu', A),
\]
where $\mu'=\rho'\, dx$ with
\[
\rho' :=\lambda' \left(f^+\right)^{\frac{n}{n+1}}(x),
\]
and $\lambda'$ is the normalizing coefficient
\[
\lambda' := \left(\int_A \left(f^+\right)^{\frac{n}{n+1}}(x)\, dx\right)^{-1}.
\]
Hence,
\[
H(A)= \theta_n \left(\int_A (f^+(x))^{\frac{n}{n+1}}\, dx\right)^{\frac{n-1}{n}}.
\]
Let now $\{A_k\}$ be a minimizing sequence for $H$
of canonical minimizers to~\eqref{eq_loc1lim0P}.
Since $A_k^c$ are contained in a big ball, one has
that $A_k\to A$ in Hausdorff distance as $k\to \infty$. Clearly,
$A$ is still a minimizer of~\eqref{eq_loc1lim0P}, and by Proposition~\ref{prop_loc1Gamloc_A3_0b} one has
$1_{A_k}\to 1_A$ pointwise.
Hence, $H(A_k)\to H(A)$ as $k\to \infty$, which means that $H$ admits a minimizer.

Observe that
\begin{equation}\label{eq_loc1_infAmu}
\begin{aligned}
\inf\{ G_\infty & (\mu)\,: \, \mu \in {\mathcal P}(D)\}=\\
& \inf\left\{ \Phi(\mu, A)\,:\, \mu \in {\mathcal P}(D), \supp\mu\subset A, \, A \mbox{ is a minimizer to~\eqref{eq_loc1lim0P}}
\right\}=\\
& \inf\left\{ H(A)\,:\,A\mbox{ is a minimizer to~\eqref{eq_loc1lim0P}}
\right\}.
\end{aligned}
\end{equation}
Consider now an arbitrary sequence of minimizers $\Sigma_k$. By general properties of $\Gamma$-convergence it has a subsequence
(not relabeled) such that, for $k\to \infty$ one has
\[
\mu_{\Sigma_k}\rightharpoonup \mu,
\]
where $\mu$ is a minimizer of $G_\infty$. By~\eqref{eq_loc1_infAmu} the latter is supported on some minimizer $M$ to $H$ (which is hence, in particular, the minimizer of~\eqref{eq_loc1lim0P}), and minimizes
$\Phi(\cdot, M)$, so that~\eqref{eq_loc1_mulim} is valid.
\end{proof}

\section{Proof of Theorem~\ref{th_loc1Gamloc}}\label{sec_loc1limdens1}

\subsection{$\Gamma-\liminf$ inequality}\label{subsec_loc1liminf}

First, we will deal with the inequality for $\Gamma-\liminf$ given by the following statement.

\begin{proposition}\label{prop_loc1Gamloc_liminf}
Assume $n\geq 2$, $\varphi \ll {\mathcal L}^n$,
and for every minimizer $M$ of~\eqref{eq_loc1lim0P} one has
\[
M\cap \supp\varphi^-=\emptyset,
\]
(in particular, this is true when any of the conditions of Proposition~\ref{prop_loc1chargeLim1} holds). Suppose that, for a certain sequence $\Sigma_k$, one has $\mu_{\Sigma_k}\rightharpoonup \mu.$

Then
\begin{align*}
\liminf_k k^{1/n} & \left(
F(\Sigma_k)- \inf_A F(A)
\right)\geq\\
& \inf\left\{ \Phi(\mu, A)\,:\, \supp\mu\subset A, \, A \mbox{ is a minimizer to~\eqref{eq_loc1lim0P}}
\right\},
\end{align*}
\end{proposition}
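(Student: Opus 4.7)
The plan is to pass to a Hausdorff limit $M$ of the $\Sigma_k$, identify it with a minimizer of~\eqref{eq_loc1lim0P} containing $\mathrm{supp}\,\mu$, and then split $F(\Sigma_k)-F(M)$ into an ``inside-$M$'' contribution handled by Lemma~\ref{lm_TilliMoscGammaloc} and an ``outside-$M$'' contribution that must be shown to be of order $o(k^{-1/n})$ via the first-order optimality of $M$. I begin by passing to a subsequence along which $k^{1/n}(F(\Sigma_k)-\inf_A F(A))$ converges to the liminf; if this limit is $+\infty$ there is nothing to prove, otherwise $F(\Sigma_k)\to\inf F$. Since $\Sigma_k\subset D$ automatically, Blaschke selection yields $\Sigma_k\to M'$ in the Hausdorff distance for some compact $M'$, continuity of $F$ on closed subsets of $D$ gives $F(M')=\inf F$ so $M'$ is a minimizer of~\eqref{eq_loc1lim0P}, and the inclusion $\Sigma_k\subset(M')_{\varepsilon_k}$ with $\varepsilon_k\to 0$ forces the weak-$*$ limit $\mu$ to lie in $\mathcal P(M')$. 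I then enlarge $M'$ to its canonical extension $M\supset M'$ from Proposition~\ref{prop_loc1Mprime}: $M$ is still a minimizer, still contains $\mathrm{supp}\,\mu$, and by Proposition~\ref{prop_loc1extballLim0} (since $M\cap\mathrm{supp}\,\varphi^-=\emptyset$ by hypothesis) it satisfies the uniform external ball condition; moreover $\dist(\cdot,M)=\dist(\cdot,M')$ on $\mathrm{supp}\,\varphi^-$ by construction of the canonical set. Since this particular $M$ is admissible in the infimum on the right-hand side of the claim, it is enough to prove $\liminf_k k^{1/n}(F(\Sigma_k)-F(M))\ge\Phi(\mu,M)$.

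The key trick is the comparison with $M\cup\Sigma_k$: since $M$ minimizes $F$ over all closed sets, $F(M\cup\Sigma_k)\ge F(M)$, and so
\[
F(\Sigma_k)-F(M)\;\ge\;F(\Sigma_k)-F(M\cup\Sigma_k)\;=\;\int h_k^+\,d\varphi^+\;-\;\int h_k^+\,d\varphi^-,
\]
where $h_k^+(x):=(\dist(x,\Sigma_k)-\dist(x,M))^+\ge 0$. Since $\varphi^-(M)=0$ and $h_k^+\ge 0$, the first integral dominates $\int_M\dist(x,\Sigma_k)\,d\varphi^+$, and Lemma~\ref{lm_TilliMoscGammaloc} applied to $\varphi^+\res M$ together with $\mu_{\Sigma_k}\rightharpoonup\mu$ yields $\liminf_k k^{1/n}\int_M\dist(x,\Sigma_k)\,d\varphi^+\ge\Phi(\mu,M)$. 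The entire problem is thereby reduced to establishing $k^{1/n}\int h_k^+\,d\varphi^-\to 0$.

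For this last negligibility, the set $\mathrm{supp}\,\varphi^-$ sits at distance $\ge\delta>0$ from $M$, and the uniform external ball condition ensures that $\pi_M$ is well-defined and Lipschitz on $\mathrm{supp}\,\varphi^-$. Picking for each $x\in\mathrm{supp}\,\varphi^-$ a point $q\in\Sigma_k$ closest to $\pi_M(x)$ and setting $v:=q-\pi_M(x)$ with $|v|\le 2\varepsilon_k$, a second-order Taylor expansion of $|x-q|$ around $|x-\pi_M(x)|\ge\delta$ gives
\[
h_k(x)\;\le\;|x-q|-|x-\pi_M(x)|\;=\;-\hat n(x)\!\cdot\! v\;+\;O(|v|^2/\delta),
\]
where $\hat n(x)=\nabla\dist(\cdot,M)(x)$. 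The quadratic remainder is of order $\varepsilon_k^2$ and, combined with an a priori bound $\varepsilon_k=O(k^{-1/n})$ deduced from the boundedness of $k^{1/n}\int_M\dist(x,\Sigma_k)\,d\varphi^+$ and the absolute continuity of $\varphi^+$, contributes at most $O(k^{1/n}\varepsilon_k^2)=O(k^{-1/n})=o(1)$. The linear term must then be killed via first-order optimality of $M$: after regularizing the piecewise-constant field $x\mapsto v$ (for instance by averaging over the Voronoi cells in $M$ generated by $\pi_M(\Sigma_k)$) into a smooth vector field $\tilde X$ with $v(x)\approx\tilde X(\pi_M(x))$, the mass balance identity of Proposition~\ref{prop_loc1massbalance0} forces $\int_{M^c}\hat n(x)\cdot\tilde X(\pi_M(x))\,d\varphi=0$, which combined with the approximation error produces a contribution $o(\varepsilon_k)$ to $\int\hat n\cdot v\,d\varphi^-$. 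This last cancellation is the main obstacle of the proof: the displacement $v$ is a genuinely discrete, $x$-dependent object to which Proposition~\ref{prop_loc1massbalance0} does not apply verbatim, so the regularization must be designed to preserve simultaneously the scaling $|v|\le 2\varepsilon_k$ and the mass balance at the order of precision required.
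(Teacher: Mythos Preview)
Your comparison trick $F(\Sigma_k)-F(M)\ge F(\Sigma_k)-F(M\cup\Sigma_k)=\int h_k^+\,d\varphi$ is correct and elegant, but the subsequent reduction contains a genuine error. After you discard $\int_{M^c}h_k^+\,d\varphi^+\ge 0$ and keep only $\int_M \dist(\cdot,\Sigma_k)\,d\varphi^+$, the remaining target $k^{1/n}\int h_k^+\,d\varphi^-\to 0$ is \emph{false} in general. Take any situation with $\varphi^+(M^c)>0$ (which is typical: by the first-order condition the optimal radius satisfies $\varphi^+(M^c)=\varphi^-(\R^n)$) and let $\Sigma_k$ be asymptotically optimal quantizers of $\varphi^+\res M$. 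Then the nearest point of $\Sigma_k$ to any boundary point of $M$ sits at distance $\sim ck^{-1/n}$, so for $x\in\supp\varphi^-$ one has $h_k^+(x)\sim ck^{-1/n}$ and $k^{1/n}\int h_k^+\,d\varphi^-\to c\,\varphi^-(\R^n)>0$. The mass balance you invoke does not kill this term on its own: it only says $\int_{M^c}\hat n\cdot X\,d\varphi^-=\int_{M^c}\hat n\cdot X\,d\varphi^+$, i.e.\ the $\varphi^-$ integral is balanced precisely by the $\varphi^+$ term over $M^c$ that you threw away. If you restore that term and aim instead for $k^{1/n}\int_{M^c}h_k^+\,d\varphi\ge -o(1)$, you face two further obstacles you have not addressed: the bound $\varepsilon_k=O(k^{-1/n})$ cannot be read off from an $L^1$ control of $\int_M\dist(\cdot,\Sigma_k)\,d\varphi^+$ (that only yields $\varepsilon_k\lesssim k^{-1/(n(n+1))}$ at best, even when $f^+$ is bounded below), and the first-order condition applies to $h_k$, not to its positive part $h_k^+$.

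The paper avoids all of this by a different mechanism. It never tries to show the outside-$M$ contribution is $o(k^{-1/n})$ directly, nor does it need any rate on $\varepsilon_k$. Instead it discretizes $\varphi^-$ into a finite atomic measure $\varphi^-_{j}$, builds the corresponding finite-ball complements $M_{j}$, and uses a pointwise comparison lemma (Lemma~\ref{lm_loc1Gamloc_A2}) to show that replacing $\Sigma_k$ by an auxiliary set $\hat\Sigma_k\supset\partial M_{j_k}$ costs at least as much as replacing $M_{j_k}$ by an ``inflated'' $\hat M_k$; since $M_{j_k}$ is near-optimal this cost is $\ge -e_{j_k}$ regardless of the size of $\varepsilon_k$. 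The non-finite $\hat\Sigma_k$ is then replaced by a finite $\tilde\Sigma_k$ using $\delta_k$-nets on $\partial M_{j_k}$ and on an annulus of thickness $\varepsilon_k$; the key is that a judicious choice of $\delta_k$ (solving $k\delta_k^n=\sqrt{\varepsilon_k+\delta_k}$) makes the number of added points $o(k)$ \emph{and} $k^{1/n}\delta_k\to 0$ using only $\varepsilon_k\to 0$, with no rate assumption.
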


To prove the above Proposition~\ref{prop_loc1Gamloc_liminf}, we make some auxiliary constructions. First of all notice that it is only necessary to prove the statement when any $\Sigma_k$ converges, up to subsequences, to a minimizer of~\eqref{eq_loc1lim0P} (if it is not the case, the term $F(\Sigma_k)- \inf_A F(A)$ does not tend to $0$ and hence the left hand side in the inequality tends to $+\infty$). Let us suppose, hence, $\Sigma_k \to M'$ in the sense of Hausdorff,
where $M'$ is a minimizer of~\eqref{eq_loc1lim0P} $\mu_k:=\mu_{\Sigma_k}\rightharpoonup \mu$ as $k\to \infty$, hence
$\mu$ is concentrated on $M'$. Let $M$ be the canonical minimizer of~\eqref{eq_loc1lim0P} with respect to $M'$.
Further, notice that by Lemma~\ref{lm_F_lim} one may assume without loss of generality that
all $\Sigma_k$ are contained in some
ball.

We now approximate the measure $\varphi^-$ by atomic measures $\varphi^- _j$
with $\varphi^- _j(\R^n)\leq \varphi^-(\R^n)$
in $*$-weak sense, i.e.\ so that
$\varphi^- _j\rightharpoonup \varphi^-$ as $j\to \infty$, in the following way. We cover $\R^n$ by a uniform grid $G _j$ of step $\varepsilon _j>0$, let the finite set $\{x_i\}_{i\in I ^j}$ be made of all such points in the cells $C_i$ of this grid that
$x_i\in \supp\varphi\cap C_i$
(hence $c_i :=\varphi^-(C_i) >0$) for all $i\in I ^j$, and let
\[
\varphi^- _j:=\sum_{i\in I ^j} c_i\delta_{x_i}.
\]
Keeping in mind  that
\[
M=\left(\bigcup_{x\in\supp \varphi^-} B_{r_{x}} (x)\right)^c,
\]
where $r_x:= \dist(x, M)$,
set
\[
M _j:=\left(\bigcup_{i\in I ^j} B^c_{r_{x_i}} (x_i)\right)^c.
\]
We have now the following easy statement.

\begin{lemma}\label{lm_loc1Gamloc_FkF}
One has $F _j(M _j)\leq \min_A F _j(A) + e _j$ and $F _j(M _j)\leq \min_A F (A) + e _j=F(M)+e_j$, where
\begin{align*}
    F _j(A) &:= \int_{\R^n} \dist(x, A)\, d(\varphi^+(x)-\varphi^- _j(x)),
\end{align*}
and
$e _j\to 0$ as $j\to \infty$.
\end{lemma}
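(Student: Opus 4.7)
The plan is to deduce both bounds from a single quantitative estimate
\[
\sup_{A\text{ closed}}|F_j(A)-F(A)|\le\delta_j\to 0
\]
together with a direct comparison of $F_j(M_j)$ with $F(M)$ obtained from the structural relation $M\subset M_j$.

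First, I would establish two key geometric properties of $M_j$. For any $y\in M$ and any $i\in I^j$, since $r_{x_i}=\dist(x_i,M)\le|y-x_i|$, one has $y\notin B_{r_{x_i}}(x_i)$; hence $M\subset\bigcap_i B^c_{r_{x_i}}(x_i)=M_j$, and in particular $\dist(\cdot,M_j)\le\dist(\cdot,M)$ everywhere on $\R^n$. Moreover, $B_{r_{x_i}}(x_i)\cap M_j=\emptyset$ by construction gives $\dist(x_i,M_j)\ge r_{x_i}$, while $M\subset M_j$ gives the reverse inequality; thus $\dist(x_i,M_j)=r_{x_i}=\dist(x_i,M)$, and consequently $\int\dist(\cdot,M_j)\,d\varphi^-_j=\int\dist(\cdot,M)\,d\varphi^-_j$. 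Splitting $F_j(M_j)-F(M)$ into its $\varphi^+$ and $\varphi^-$ parts and using these two facts yields
\[
F_j(M_j)-F(M)\le\int\dist(\cdot,M)\,d(\varphi^--\varphi^-_j),
\]
the $\varphi^+$ contribution being nonpositive by the monotonicity $\dist(\cdot,M_j)\le\dist(\cdot,M)$.

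Next I would produce the uniform estimate. Since $\dist(\cdot,A)$ is $1$-Lipschitz irrespective of $A$, and $\varphi^-_j=\sum_i\varphi^-(C_i)\,\delta_{x_i}$ is a grid sampling of $\varphi^-$ with $x_i\in C_i$ of diameter $\le\sqrt n\,\eps_j$, the natural coupling transporting each mass $\varphi^-\res C_i$ to the atom $x_i$ costs at most $\sqrt n\,\eps_j$ per unit mass; by Kantorovich--Rubinstein duality,
\[
|F_j(A)-F(A)|=\Bigl|\int\dist(\cdot,A)\,d(\varphi^--\varphi^-_j)\Bigr|\le\sqrt n\,\eps_j\,\varphi^-(\R^n)+\eta_j=:\delta_j,
\]
where $\eta_j\to 0$ absorbs the $\varphi^-$-mass of cells outside $I^j$ (vanishing as $\eps_j\to 0$). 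Applying the previous step with $A=M$ yields $F_j(M_j)\le F(M)+\delta_j$, which is the second claim; the uniform estimate then gives $\min_A F_j(A)\ge\min_A F(A)-\delta_j=F(M)-\delta_j$, so
\[
F_j(M_j)\le F(M)+\delta_j\le\min_A F_j(A)+2\delta_j,
\]
and setting $e_j:=2\delta_j$ closes both inequalities simultaneously.

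The main (and only modest) obstacle is showing that $\delta_j$ is genuinely independent of $A$; once this is granted the comparison of minima is essentially automatic. The uniformity rests entirely on the $1$-Lipschitz character of the distance functions $\dist(\cdot,A)$, reducing the matter to a $W_1$-type transport estimate between $\varphi^-$ and $\varphi^-_j$, plus a small correction for the possible mass defect $\varphi^-_j(\R^n)\le\varphi^-(\R^n)$, both of which vanish as the grid is refined.
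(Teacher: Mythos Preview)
Your argument is correct and follows the same overall skeleton as the paper's proof: a uniform bound $\sup_A |F_j(A)-F(A)|\le W_1(\varphi^-_j,\varphi^-)$ (your $\delta_j$), combined with a comparison of $F_j(M_j)$ with $F(M)$, and then passage to minima.

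The one genuine difference is in how you carry out the comparison step. The paper writes
\[
F_j(M_j)\le F_j(M)+C\,d_H(M_j,M)\le F(M)+W_1(\varphi^-_j,\varphi^-)+C\,d_H(M_j,M),
\]
and therefore needs the separate Hausdorff convergence $d_H(M_j,M)\to 0$ (Lemma~\ref{lm_loc1Gamloc_MkM}) to conclude. You instead exploit the structural inclusions directly: from $M\subset M_j$ you get $\dist(\cdot,M_j)\le\dist(\cdot,M)$, and from $\dist(x_i,M_j)=r_{x_i}=\dist(x_i,M)$ at each atom of $\varphi^-_j$ you replace the $\varphi^-_j$-integral exactly, arriving at
\[
F_j(M_j)-F(M)\le\int\dist(\cdot,M)\,d(\varphi^- -\varphi^-_j)\le\delta_j
\]
without ever invoking Hausdorff convergence. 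This is a cleaner route to the same conclusion; the paper's error term contains $d_H(M_j,M)$ as an additional summand that your argument shows to be unnecessary.
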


\begin{proof}
We have
\begin{align*}
F _j(M _j) &= F _j(M) + \int_{\R^n} (\dist(x, M _j)-\dist(x, M))\, d(\varphi^+(x)-\varphi^- _j(x)) \\
& \leq F _j(M) + C d_H(M _j,M),
\end{align*}
where $C>0$ is independent of $j$. But since
\begin{equation}\label{eq_loc1FkFwass}
|F _j(A)-F(A)|\leq W_1(\varphi^- _j,\varphi),
\end{equation}
where $W_1$ stands for the Kantorovich-Wasserstein distance between measures,
we get
\begin{align*}
F _j(M _j) & \leq F(M) + W_1(\varphi^- _j,\varphi) + C d_H(M _j,M)\\
& =
\min_A F(A) + W_1(\varphi^- _j,\varphi) + C d_H(M _j,M).
\end{align*}
But, again, by~\eqref{eq_loc1FkFwass},
\[
|\min_A F _j (A)- \min_A F(A)|\leq W_1(\varphi^- _j,\varphi),
\]
and thus
\[
F _j(M _j)\leq \min_A F _j (A) + 2 W_1(\varphi^- _j,\varphi) + C d_H(M _j,M),
\]
which concludes the proof by setting
\[
e _j:=2 W_1(\varphi^- _j,\varphi) + C d_H(M _j,M)
\]
and keeping in mind  that $W_1(\varphi^- _j,\varphi^-)\to 0$ (since $\varphi^- _j\rightharpoonup \varphi^-$) and $d_H(M _j,M)\to 0$ (by Lemma~\ref{lm_loc1Gamloc_MkM}) as $j\to \infty$.
\end{proof}

From now on we fix a sequence $j_k\to\infty$ of indices and consider only the sets $M_{j_k}$. This sequence will be chosen so as to guarantee that the convergence $e_{j_k}\to 0$ is quick enough, according to some criteria to be made precise later.

Let now
\begin{align*}
r_i & := r_{x_i},\\
\varepsilon_i^k & := \dist(x_i, \Sigma_k)-\dist(x_i, M_{j_k})
\end{align*}
for every $i\in I^j$. The following statement holds true (independently of the convergence speed).

\begin{lemma}\label{lm_loc1Gamloc_A1add}
Letting
\begin{align*}
\varepsilon_k^\pm&:=\max\left\{
(\dist(y,\Sigma_k)-\dist(y,M_{j_k}))^\pm\,:\,y\in\supp\varphi^-_{j_k}\right\},
\end{align*}
where $f(\cdot)^\pm$ stand for the positive and negative part of the function
$f(\cdot)$ respectively,
we have $\varepsilon_k^\pm\to0$ as $k\to\infty$.
\end{lemma}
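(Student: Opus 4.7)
The plan is to evaluate $\dist(x_i,M_{j_k})$ exactly for every atom $x_i$ of $\varphi^-_{j_k}$, which will reduce the claim to the Hausdorff convergence $\Sigma_k\to M'$. The key identity I want is
\[
\dist(x_i, M_{j_k}) = r_{x_i} = \dist(x_i, M')\qquad\text{for every } i\in I^{j_k}.
\]
If this holds, then since $\dist(\cdot,\Sigma_k)$ and $\dist(\cdot,M')$ are $1$-Lipschitz and $d_H(\Sigma_k,M')\to 0$, we get the uniform bound
\[
\bigl|\dist(x_i,\Sigma_k)-\dist(x_i,M_{j_k})\bigr|=\bigl|\dist(x_i,\Sigma_k)-\dist(x_i,M')\bigr|\le d_H(\Sigma_k,M'),
\]
valid simultaneously for all $i\in I^{j_k}$. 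Taking the maximum over $i$ gives $\varepsilon_k^\pm\le d_H(\Sigma_k,M')\to 0$, which is exactly the claim; note that this bound is uniform in $i$ (so the growth of $|I^{j_k}|$ with $k$ is harmless).

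To prove the identity I would use a double inclusion argument. The inequality $\dist(x_i, M_{j_k})\ge r_{x_i}$ is immediate from the definition $M_{j_k}\subset B^c_{r_{x_i}}(x_i)$. For the reverse inequality, observe that since $\{x_i\}_{i\in I^{j_k}}\subset \supp\varphi^-$, the finite intersection defining $M_{j_k}$ involves fewer sets than the intersection defining $M$, so $M\subset M_{j_k}$ and hence $\dist(x_i, M_{j_k})\le\dist(x_i,M)$. Finally, since $M$ is canonical with respect to $M'$, Proposition~\ref{prop_loc1Mprime} (specifically equation~\eqref{eq_loc1dist1}) gives $\dist(y,M)=\dist(y,M')$ for every $y\in\supp\varphi^-$, so $\dist(x_i,M)=r_{x_i}$, yielding the identity.

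There is no real obstacle: the explicit ball-complement definition of $M_{j_k}$ was tailor-made so that $x_i$ sits exactly at distance $r_{x_i}$ from it, and the canonicality of $M$ is precisely what aligns this distance with $\dist(x_i,M')$. The only mild subtlety worth emphasising in the write-up is that the Lipschitz bound $d_H(\Sigma_k,M')$ does not depend on $i$, so one may safely pass to the maximum even though the number of atoms $|I^{j_k}|$ diverges with $k$.
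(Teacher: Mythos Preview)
Your argument is correct and in fact cleaner than the paper's. The paper argues by contradiction: assuming $\varepsilon_k^+\not\to 0$, it extracts a subsequence $x_k\in\supp\varphi^-_{j_k}$ with $x_k\to x\in\supp\varphi^-$, uses the two Hausdorff convergences $\Sigma_k\to M'$ and $M_{j_k}\to M$ to pass to the limit in the inequality $\dist(x_k,\Sigma_k)\ge\dist(x_k,M_{j_k})+d$, and then contradicts~\eqref{eq_loc1dist1}. You instead exploit the explicit ball-complement form of $M_{j_k}$ to establish the exact identity $\dist(x_i,M_{j_k})=r_{x_i}=\dist(x_i,M')$, which removes $M_{j_k}$ from the picture entirely and reduces the claim to the single quantitative bound $\varepsilon_k^\pm\le d_H(\Sigma_k,M')$. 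This buys you an explicit rate and avoids the compactness/subsequence machinery altogether; the only price is that you need the inclusion $\supp\varphi^-_{j_k}\subset\supp\varphi^-$ rather than the weaker closedness property noted in the Remark following the Lemma, but the paper's construction does satisfy this stronger inclusion. One cosmetic point: in your write-up the sentence ``so $\dist(x_i,M)=r_{x_i}$'' is tautological (that is the definition of $r_{x_i}$); what Proposition~\ref{prop_loc1Mprime} actually supplies is the final link $\dist(x_i,M)=\dist(x_i,M')$.
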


\begin{proof}
Suppose first that, up to a subsequence (not relabeled), $\varepsilon_k^+\to 2d>0$ as $k\to\infty$. This means the existence of $x_k\in\supp\varphi^-_{j_k}$ such that
\[
\dist(x_k, \Sigma_k)\geq \dist(x_k,  M_{j_k})+d
\]
for all sufficiently large $k\in\N$. Again up to a subsequence (not relabeled) one has $x_k\to x\in\supp\varphi^-$, and hence, keeping in mind  the convergence of $\Sigma_k$ to $M'$ and of $ M_{j_k}$ to $M$ in the Hausdorff distance and passing to a limit in the above inequality as $k\to\infty$, we get
\[
\dist(x,M')\ge\dist(x,M)+d,
\]
which is impossible since $\dist(y,M')=\dist(y,M)$ for all $y\in\supp\varphi^-$ by Proposition~\ref{prop_loc1Mprime}. This contradiction proves $\varepsilon_k^+\to0$ as $k\to\infty$. The proof of $\varepsilon_k^-\to0$ is completely symmetric.
\end{proof}

\begin{remark}
In our construction one has $\supp\varphi^-_{j}\subset\supp\varphi^-$. However, it is worth noting that the proof of the above Lemma uses only a milder property, namely, if $x_k\in \supp\varphi^-_{j}$ and $x_k\to x$ as $k\to \infty$, then $x\in\supp\varphi^-$, and hence the statement is still true in this case.
\end{remark}

Let us also let
\begin{align*}
\hat{\Sigma}_k & := \pi_{M_{j_k}}(\Sigma_k) \cup \partial M_{j_k}
\cup \left(\bigcup_i \left( \bar B_{r_i+\varepsilon_i^k}(x_i) \setminus B_{r_i}(x_i) \right)
\cap M_{j_k}\right),\\
\hat{M}_k &:=\bigcap_i B_{r_i+\varepsilon_i^k}(x_i)^c,
\end{align*}
see Figure \ref{loc1_fig1}.

\begin{center}
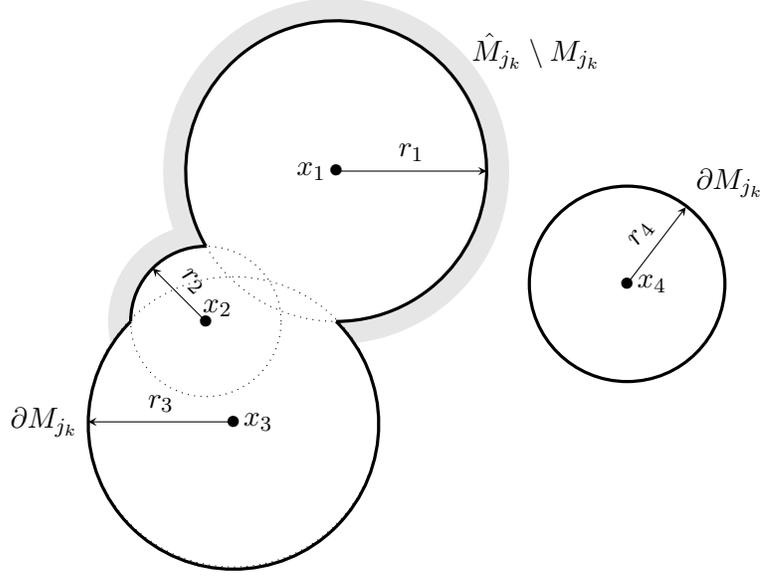
\begin{figure}
\begin{tikzpicture}
\fill[color=gray!20]  (0,-1) circle (1.3) ;
\fill[color=gray!20]  (1.732,1) circle (2.3) ;
\fill[white]  (0,-1) circle (1) ;
\fill[white]  (1.732,1) circle (2) ;
\fill[white] (0.366,-2.336) circle (1.931) ;

\draw (3.4,2.6) node[right] {$\hat M_{j_k}\setminus M_{j_k}$};

\draw[very thick] (0,0) arc (90:180:1) ;
\draw [dotted] (0,-1) circle (1) ;
\draw [->,>=stealth] (0,-1) -- (-0.707, -0.293)  node[midway,above,sloped] {$r_2$} node[near start,right] {$x_2$};
\draw (0,-1)  node{$\bullet$};

\draw[very thick]  (0,0) arc (210:-90:2) ;
\draw  [dotted]  (1.732,1) circle (2) ;
\draw [->,>=stealth]  (1.732,1)  --  (3.732,1) node[midway,above,sloped] {$r_1$};
\draw (1.732,1) node[left] {$x_1$} node{$\bullet$};

\draw[very thick]   (1.732,-1) arc (45:-225:1.931) ;
\draw  [dotted] (0.366,-2.336) circle (1.931) ;
\draw [->,>=stealth]  (0.366,-2.336)  --  (-1.565, -2.336) node[midway,above,sloped] {$r_3$} ;
\draw  (0.366,-2.336) node[right] {$x_3$} node{$\bullet$};
\draw (-1.565, -2.336) node[left] {$\partial M_{j_k}$};

\draw[very thick] (5.6,-0.5) circle (1.3) ;
\draw [->,>=stealth]  (5.6,-0.5)  --  (6.38, 0.52)  node[midway,above,sloped] {$r_4$};
\draw (5.6,-0.5) node[right] {$x_4$} node{$\bullet$};
\draw (6.38, 0.52) node[above right] {$\partial M_{j_k}$};

\end{tikzpicture}
\caption{The construction of $\hat M_k$}
\label{loc1_fig1}
\end{figure}
\end{center}

We need the following auxiliary statement.

\begin{lemma}\label{lm_loc1Gamloc_A2}
One has
\[
F_{j_k}(\Sigma_k)-  F_{j_k}(\hat{\Sigma}_k) \geq
F_{j_k}(\hat{M}_k)-  F_{j_k}(M_{j_k}).
\]
In particular, keeping in mind  Lemma~\ref{lm_loc1Gamloc_FkF}, one has
\[
F_{j_k}(\Sigma_k)-F_{j_k}(\hat{\Sigma}_k)\ge-e_{j_k}.
\]
\end{lemma}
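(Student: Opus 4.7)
The plan is to rewrite the desired inequality as $F_{j_k}(\Sigma_k)+F_{j_k}(M_{j_k}) \geq F_{j_k}(\hat\Sigma_k)+F_{j_k}(\hat M_k)$, i.e.\ $\int g\,d(\varphi^+-\varphi^-_{j_k}) \geq 0$ for
\[
g(x) := \dist(x,\Sigma_k) + \dist(x,M_{j_k}) - \dist(x,\hat\Sigma_k) - \dist(x,\hat M_k),
\]
and to show that $g \geq 0$ holds pointwise while $g \equiv 0$ on the support of $\varphi^-_{j_k}$. A preliminary observation that makes this clean is the identity $\dist(x_i,M_{j_k}) = r_i$: indeed $M_{j_k} \supset M$ (since $M_{j_k}$ is built from strictly fewer balls than $M$) yields $\dist(x_i,M_{j_k}) \leq r_i$, while $M_{j_k} \subset B^c_{r_i}(x_i)$ yields the reverse inequality. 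Consequently $r_i + \varepsilon_i^k = \dist(x_i,\Sigma_k)$, and $\Sigma_k \subset \hat M_k$ follows directly from the definition of $\hat M_k$.

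For the atomic contribution I would verify $g(x_i) = 0$ by two sandwich arguments. First, $\hat\Sigma_k \subset M_{j_k}$ combined with $\partial M_{j_k} \subset \hat\Sigma_k$, together with $x_i \notin M_{j_k}$, force $\dist(x_i,\hat\Sigma_k) = \dist(x_i,\partial M_{j_k}) = \dist(x_i,M_{j_k}) = r_i$. Second, $\Sigma_k \subset \hat M_k \subset B^c_{r_i+\varepsilon_i^k}(x_i)$ forces $\dist(x_i,\hat M_k) = \dist(x_i,\Sigma_k) = r_i+\varepsilon_i^k$. Substituting into $g(x_i)$ gives exact cancellation, hence $\int g\,d\varphi^-_{j_k} = 0$.

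The core of the argument is the pointwise inequality $g(x) \geq 0$, which I would establish by splitting according to whether $x \in \hat M_k \cap M_{j_k}$ or not. If $x \notin M_{j_k}$, the same sandwich as above (with $\pi_{M_{j_k}}(x) \in \partial M_{j_k} \subset \hat\Sigma_k$) gives $\dist(x,\hat\Sigma_k) = \dist(x,M_{j_k})$, and $g(x)$ collapses to $\dist(x,\Sigma_k) - \dist(x,\hat M_k) \geq 0$ (thanks to $\Sigma_k \subset \hat M_k$). If $x \in M_{j_k} \setminus \hat M_k$, then $x$ belongs to some shell $\bar B_{r_i+\varepsilon_i^k}(x_i) \setminus B_{r_i}(x_i)$ and hence to $\hat\Sigma_k$ by construction, so $\dist(x,\hat\Sigma_k) = 0 = \dist(x,M_{j_k})$ and $g$ again reduces to the same nonnegative difference. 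The remaining, delicate case is $x \in \hat M_k \cap M_{j_k}$, where $\dist(x,M_{j_k}) = \dist(x,\hat M_k) = 0$ and the target becomes $\dist(x,\hat\Sigma_k) \leq \dist(x,\Sigma_k)$. I plan to handle it by choosing $y \in \Sigma_k$ realizing $\dist(x,\Sigma_k)$: if $y \in M_{j_k}$ then $y = \pi_{M_{j_k}}(y) \in \hat\Sigma_k$ and the inequality is immediate; if $y \notin M_{j_k}$, the segment $[y,x]$ joins a point outside the closed set $M_{j_k}$ to a point inside it, so it meets $\partial M_{j_k} \subset \hat\Sigma_k$ at a first point $z$ satisfying $|x-z| \leq |x-y|$.

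The "in particular" part is then immediate from Lemma~\ref{lm_loc1Gamloc_FkF}: since $F_{j_k}(\hat M_k) \geq \min_A F_{j_k}(A) \geq F_{j_k}(M_{j_k}) - e_{j_k}$, combining with the inequality just proven yields $F_{j_k}(\Sigma_k) - F_{j_k}(\hat\Sigma_k) \geq -e_{j_k}$. The step I expect to be the main obstacle is the segment-crossing argument on $\hat M_k \cap M_{j_k}$: it is precisely what the somewhat intricate definition of $\hat\Sigma_k$ — forcing inclusion of the entire topological boundary $\partial M_{j_k}$ and of the shell slices inside $M_{j_k}$ — is engineered to make possible, and one must keep track of all four regions in the partition of $\R^n$ by $M_{j_k}$ and $\hat M_k$ in order to see the inequalities go through in each.
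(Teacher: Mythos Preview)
Your proposal is correct and follows essentially the same approach as the paper: the same equality on the atoms $x_i$ of $\varphi^-_{j_k}$, the same pointwise inequality elsewhere, and the same three-way case split according to the position of $x$ relative to $M_{j_k}$ and $\hat M_k$ (including the segment-crossing argument for $x\in M_{j_k}\cap\hat M_k$). The only difference is cosmetic: you package the inequality as $g\ge 0$, and in the case $x\in M_{j_k}\setminus\hat M_k$ you justify $\dist(x,\hat\Sigma_k)=0$ a bit more explicitly via the shell inclusion than the paper does.
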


\begin{proof}
If $x_i\in \supp\varphi^-_k$, then
$$\left\{
\begin{array}{ll}
(r_i+\varepsilon_i^k)-r_i=
\dist(x_i, \Sigma_k)-\dist(x_i, \hat{\Sigma}_k)\\
(r_i+\varepsilon_i^k)-r_i=
\dist(x_i, \hat{M}_k)-\dist(x_i, M_{j_k}),
\end{array}
\right.$$
hence
\[
\dist(x_i, \Sigma_k)-\dist(x_i, \hat{\Sigma}_k)=
\dist(x_i, \hat{M}_k)-\dist(x_i, M_{j_k}).
\]
It suffices hence to verify
\begin{equation}\label{eq_loc1A2a}
\dist(x, \Sigma_k)-\dist(x, \hat{\Sigma}_k)\geq
\dist(x, \hat{M}_k)-\dist(x, M_{j_k})
\end{equation}
for all $x\in \varphi^+$.
For this purpose consider the following possible cases.

{\sc Case A.} If $x\not\in M_{j_k}$, then $\dist(x, \hat{\Sigma}_k)=\dist(x, M_{j_k})$,
because $\partial M_{j_k}\subset \hat{\Sigma}_k\subset M_{j_k}$.
But $\Sigma_k\subset \hat{M}_k$; hence $\dist(x, \Sigma_k)\geq \dist(x, \hat{M}_k)$, which shows~\eqref{eq_loc1A2a} for this case.

{\sc Case B.} If $x\in M_{j_k}\cap \hat{M}_k$, then $\dist(x, \hat{M}_k)=\dist(x, M_{j_k})=0$. Let $y\in \Sigma_k$ be such that
$\dist(x, \Sigma_k)=|x-y|$.
If $y\in M_{j_k}$, then by the definition of $\hat{\Sigma}_k$ one has $y\in \hat{\Sigma}_k$, which implies
$\dist(x, \hat{\Sigma}_k)\leq |x-y|=\dist(x, \Sigma_k)$,
which shows~\eqref{eq_loc1A2a}. Otherwise, if
$y\not\in M_{j_k}$, then
\[
\dist(x,\partial M_{j_k})\leq |x-y|=\dist(x,\Sigma_k),
\]
and hence, since $\partial M_{j_k}\subset\hat{\Sigma}_k$,
\[
\dist(x,\hat{\Sigma}_k)\le\dist(x,\partial M_{j_k})\le\dist(x,\Sigma_k),
\]
which again shows~\eqref{eq_loc1A2a}.

{\sc Case C.} Finally, if $x\in M_{j_k}\setminus\hat{M}_k$, then $\dist(x, \hat{\Sigma}_k)=\dist(x, M_{j_k})=0$, and to prove~\eqref{eq_loc1A2a}
it suffices to verify $\dist(x,\Sigma_k)\geq \dist(x,\hat{M}_k)$.
The latter relationship is however valid because
$\Sigma_k\subset \hat{M}_k$ by construction.
This completes the proof.
\end{proof}

We are now able to prove Proposition~\ref{prop_loc1Gamloc_liminf}.

\begin{proof}[Proof of Proposition~\ref{prop_loc1Gamloc_liminf}]
We construct new sets $\tilde{\Sigma}_k$ with $\#\tilde{\Sigma}_k <\infty$
which ``approximate well'' the sets $\hat{\Sigma}_k$ (since the latter are not finite sets). This will be done as follows. Let $\varepsilon_k:=\sup_{i\in I^k}(\varepsilon_i^k)^+$ (recall that $\varepsilon_k\to0$ as $k\to\infty$ by Lemma~\ref{lm_loc1Gamloc_A1add}), and let $\delta_k\ge0$ and $\alpha>0$ to be chosen later. According to Corollary~\ref{lm_loc1Gamloc_A3} there exists a $k^{-\alpha}$-net $\mathcal{G}_{Surf}(M_{j_k},k^{-\alpha})$ inside $\partial M_{j_k}$ such that
\[
\# \mathcal{G}_{Surf}(M_{j_k}, k^{-\alpha})\leq Ck^{-\alpha(n-1)}.
\]
Further, by Lemma~\ref{lm_loc1Gamloc_A4} we may construct inside the set
\[
D:=\bigcup \left( \bar B_{r_i+\varepsilon_i^k}(x_i)\right) \setminus \bigcup\left( B_{r_i}(x_i) \right)
\]
a $\delta_k$-net $\mathcal{G}_{2}\subset\mathcal{G}_{Vol}(M_{j_k},\varepsilon_k, \delta_k)$ satisfying
\[
\# \mathcal{G}_{2}\leq \frac{C(\varepsilon_k+\delta_k)}{\delta_k^n}.
\]
Here and below $C>0$ will stand for a constant independent on $k$ possibly differing from line to line.

After defining these two grids we need to handle the projection part in the definition of $\hat{\Sigma}_k$. In order to preserve the same number of points of $\Sigma_k$ and to reproduce the measure $(\pi_{M_{j_k}})_\#\mu_{\Sigma_k}$, this projection will be replaced in $\tilde{\Sigma}_k$ with a set $\Sigma_k'$, obtained by adding to $\pi_{M_{j_k}}(\Sigma_k)$ a finite set of  points in the following way. Take $\Sigma_k'$ as the union of $\pi_{M_k}(\Sigma_k)$
with some finite sets $D_k(y)$ of cardinality
$\#\pi_{M_k}^{-1}(y)-1$ arbitrarily chosen very close to $y$ for each
$y\in \pi_{M_k}(\Sigma_k)$ such that $\#\pi_{M_k}^{-1}(y)>1$ (in particular we take them in $B_{1/k^2}(y)\cap \partial M_k$)
in such a way that $D_k(y)\cap \pi_{M_k}(\Sigma_k)=\emptyset$. It is clear that in this way we can guarantee
 \begin{equation}\label{eq_loc1_sk1}
W_1(\mu_{\Sigma_k'}, (\pi_{M_{j_k}})_\#\mu_{\Sigma_k}) \leq 1/k. 
\end{equation}
In order to justify~\eqref{eq_loc1_sk1}, just take an arbitrary $u\in \mbox{Lip}_1(U)$, where $U$ is the tubular neighborhood
of $\partial M_k$, we have
\begin{align*}
 \left|\int_U u\, d \mu_{\Sigma_k'}- \int_U u\,\pi_{M_k\#}(\mu_{\Sigma_k})
 \right| &=
 \left|\sum_{\overset{y\in \pi_{M_k}(\Sigma_k),}{ \#\pi_{M_k}^{-1}(y)>1}}\;
 \sum_{x\in D_k(y)} (u(x)-u(y))
 \right|\\
 &\leq \sum_{\overset{y\in \pi_{M_k}(\Sigma_k),}{ \#\pi_{M_k}^{-1}(y)>1}}\;
 \sum_{x\in D_k(y)} \left| \,u(x)-u(y)\right|\\
  &\leq \sum_{\overset{y\in \pi_{M_k}(\Sigma_k),}{ \#\pi_{M_k}^{-1}(y)>1}}\; Ê \sum_{x\in D_k(y)} \frac{1}{k^2}\leq k \frac{1}{k^2}= \frac 1 k.
\end{align*}


We substitute now, in the construction of $\hat{\Sigma}_k$, the set $D$ with
$\mathcal{G}_2$ and $\partial M_{j_k}$ with $\mathcal{G}_{Surf}( M_{j_k}, k^{-\alpha})$, namely,
\begin{align*}
\tilde{\Sigma}_k & := \Sigma_k' \cup \mathcal{G}_{Surf}(M_{j_k}, k^{-\alpha})
\cup (\mathcal{G}_2\cap M_{j_k}).
\end{align*}
Hence, using the estimates on $\# \mathcal{G}_{Surf}(M_{j_k}, k^{-\alpha})$ and on
$\# \mathcal{G}_2$,
we get
\[
\#\tilde{\Sigma}_k \leq \#\Sigma_k + C k^{\alpha(n-1)} +
C\frac{\varepsilon_k+\delta_k}{\delta_k^n}.
\]
Choose now $\delta_k$ such that $k\delta_k^n=\sqrt{\varepsilon_k+\delta_k}$. This equation admits, for fixed $k$ and $\varepsilon_k$, a unique solution $\delta_k\geq 0$. One also easily checks that it implies
\[
\delta_k\leq \varepsilon_k\vee C k^{1/(1/2-n)}
\]
(since either $\delta_k\leq \varepsilon_k$, or $k\delta_k^n\leq \sqrt{2\delta_k}$),
and in particular $\lim_{k\to \infty} \delta_k=0$. In this way one has
\begin{equation}\label{cond on k e deltak}
\frac {1}{k} \frac{\varepsilon_k+\delta_k}{\delta^n_k}=\sqrt{\varepsilon_k+\delta_k}\to 0,\;\mbox{ and }\; k^{1/n}\delta_k=(\varepsilon_k+\delta_k)^{1/2n}\to 0.
\end{equation}
Notice hence that for every $0<\alpha<1/(n-1)$ one has
\begin{equation}\label{eq_loc1_ok1}
\#\mathcal{G}_{Surf}(M_{j_k},k^{-\alpha})\leq Ck^{\alpha(n-1)} = o(k),\mbox{ and }
\#\mathcal{G}_2\leq\frac{C(\varepsilon_k+\delta_k)}{\delta_k^n}=o(k),
\end{equation}
which gives
\begin{equation}\label{eq_loc1_ok2}
\#\tilde{\Sigma}_k = \#\Sigma_k + o(k),
\end{equation}
as $k\to \infty$. By Lemma~\ref{lm_loc1Gamloc_A5} below combined
with~\eqref{eq_loc1_sk1}, one has then
\begin{equation}\label{eq_loc1_ok3}
\mu_{\tilde{\Sigma}_k}\rightharpoonup \pi_{M\#}\mu=\mu
\end{equation}
(the latter equality being justified by the fact that $\mu$ is concentrated on $M$).
On the other hand,
\[
 F_{j_k}(\Sigma_k)-  F_{j_k}(\hat{\Sigma}_k) \geq
- e_k
\]
by Lemma~\ref{lm_loc1Gamloc_A2}, and
\[
F_{j_k}(\hat{\Sigma}_k)-F_{j_k}(\tilde{\Sigma}_k)\ge
-Ck^{-\alpha}-C\delta_k
\]
by construction. Keeping in mind  that
\begin{gather*}
| F_{j_k}(\Sigma_k)-F(\Sigma_k)|\leq e_{j_k},\\
F(M)=\min_A F(A)\leq \min_A F_{j_k}(A)+e_{j_k}\leq F_{j_k}(M_{j_k})+e_{j_k}
\end{gather*}
(see Lemma~\ref{lm_loc1Gamloc_FkF}), we get from the above inequalities
\begin{equation}\label{eq_loc1_ok1n}
k^{1/n}\big(F(\Sigma_k)-F(M)\big)
\ge k^{1/n}\big(F_{j_k}(\tilde{\Sigma}_k)-F_{j_k}(M_{j_k})\big)
+ k^{1/n}(e_k + c\delta_k + C k^{-\alpha}).
\end{equation}

Choose now $1/n<\alpha<1/(n-1)$ so that~\eqref{eq_loc1_ok1} and hence~\eqref{eq_loc1_ok2} and~\eqref{eq_loc1_ok3} still hold.
By choosing a fast enough subsequence $j_k$  we may assume
$k^{1/n} e_{j_k}\to 0$ as $k\to \infty$.
Recall also that $k^{1/n} \delta_k\to 0$ as a consequence of \eqref{cond on k e deltak}, and hence, combining~\eqref{eq_loc1_ok1n} with~\eqref{eq_loc1_ok2} we get
\begin{equation}\label{eq_loc1_G1}
\liminf_k k^{1/n}\left(
 F(\Sigma_k)-  F(M)
\right)\geq
\liminf_k (\#\tilde{\Sigma}_k)^{1/n} \left(
 F_{j_k}(\tilde{\Sigma}_k)-  F_{j_k}(M_{j_k})
\right).
\end{equation}
But
\begin{align*}
 F_{j_k}(\tilde{\Sigma}_k)-   F_{j_k}(M_{j_k})
= & \int_{M_{j_k}}\dist(x, \tilde{\Sigma}_k)\, d\varphi^+(x) + \\
& \int_{M_{j_k}^c}\left(\dist(x, \tilde{\Sigma}_k)-\dist(x, M_{j_k})\right)\, d(\varphi^+(x)-\varphi^-_{j_k}(x),
\end{align*}
and since
\[
|\dist(x, \tilde{\Sigma}_k)-\dist(x, M_{j_k})|\leq k^{-\alpha}
\]
for every $x\in M_{j_k}^c$
(because $\mathcal{G}_{Surf}(M_{j_k}, k^{-\alpha})\subset \tilde{\Sigma}_k$),
then
we have
\begin{equation}\label{eq_loc1_G2}
k^{1/n} \left( F_{j_k}(\tilde{\Sigma}_k)-  F_{j_k}(M_{j_k})\right)
\geq \int_{M_{j_k}}\dist(x, \tilde{\Sigma}_k)\, d\varphi^+(x) -
C k^{1/n - \alpha}.
\end{equation}
Plugging~\eqref{eq_loc1_G2} into~\eqref{eq_loc1_G1} and keeping in mind  that
$k^{1/n - \alpha}=o(1)$ as $k\to \infty$ by our choice of $\alpha$, we arrive at
the inequality
\[
\liminf_k k^{1/n} \left(
 F_{j_k}(\Sigma_k)-  F_{j_k}(M)
\right)\geq
\liminf_k (\# \tilde{\Sigma}_k)^{1/n} \int_{M_{j_k}}\dist(x, \tilde{\Sigma}_k)\, d\varphi^+(x).
\]

Since $M\subset M_{j_k}$ by construction, we have
$\varphi^+\res M_{j_k}\geq \varphi^+\res M$, and hence
\[
\liminf_k k^{1/n} \left(
 G_k(\Sigma_k)-  G_k(M)
\right)\geq
\liminf_k (\# \tilde{\Sigma}_k)^{1/n} \int_M\dist(x, \tilde{\Sigma}_k)\, d\varphi^+(x).
\]
By Lemma~\ref{lm_TilliMoscGammaloc} applied with $\tilde\Sigma_k$ in place
of $\Sigma_k$, one has
\[
\liminf_k (\# \tilde{\Sigma}_k)^{1/n} \int_M\dist(x, \tilde{\Sigma}_k)\, d\varphi^+(x) \geq \Phi(\mu,M).
\]
Hence,
\begin{align*}
\liminf_k k^{1/n} & \left(
F(\Sigma_k)- \inf_A F(A)
\right)\geq\\
& \inf\left\{ \Phi(\mu, A)\,:\, \supp\mu\subset A, \, A \mbox{ is a canonical minimizer to~\eqref{eq_loc1lim0P}}
\right\}.
\end{align*}
It remains to invoke Lemma~\ref{lm_loc1_canonFinf1}, which gives the possibility to write
\begin{align*}
\liminf_k k^{1/n} & \left(
F(\Sigma_k)- \inf_A F(A)
\right)\geq
G_\infty (\mu),
\end{align*}
thus concluding the proof.
\end{proof}

\begin{lemma}\label{lm_loc1Gamloc_A5}
If $\mu_k:=\mu_{\Sigma_k}\rightharpoonup\mu$, then $\pi_{M_{j_k}\#}\mu_k\rightharpoonup\pi_{M\#}\mu$.
\end{lemma}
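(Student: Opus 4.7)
My plan is to test the weak convergence of the pushforwards against an arbitrary bounded continuous function $\phi$ and reduce everything to the weak convergence $\mu_k\rightharpoonup\mu$ already in hand. First I would note that $\mu$ is concentrated on $M'$ (as the $*$-weak limit of probability measures supported on $\Sigma_k$ with $\Sigma_k\to M'$ in Hausdorff distance), and since $M'\subset M$ by Proposition~\ref{prop_loc1Mprime}, $\mu$ is also concentrated on $M$, where $\pi_M$ acts as the identity. Thus $\pi_{M\#}\mu=\mu$ and the task reduces to showing $\pi_{M_{j_k}\#}\mu_k\rightharpoonup\mu$.

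The key quantitative step is that $\pi_{M_{j_k}}$ barely moves points of $\Sigma_k$: writing
$$\sup_{y\in\Sigma_k}|y-\pi_{M_{j_k}}(y)|=\sup_{y\in\Sigma_k}\dist(y,M_{j_k})\leq \sup_{y\in\Sigma_k}\dist(y,M')+d_H(M,M_{j_k}),$$
both right-hand terms tend to zero: the first because $\Sigma_k\to M'$ in Hausdorff distance together with the inclusion $M'\subset M$ (which gives $\dist(\cdot,M)\leq\dist(\cdot,M')$), and the second by Lemma~\ref{lm_loc1Gamloc_MkM}. Since all sets in sight lie in a fixed ball (by Lemma~\ref{lm_F_lim}), any bounded continuous test function can be taken uniformly continuous on the region containing every $\Sigma_k$.

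Granted this uniform displacement estimate, I would split
$$\int\phi\,d(\pi_{M_{j_k}\#}\mu_k)=\int\phi\,d\mu_k+\int\bigl(\phi\circ\pi_{M_{j_k}}-\phi\bigr)\,d\mu_k,$$
where the first term converges to $\int\phi\,d\mu$ by the hypothesis $\mu_k\rightharpoonup\mu$, and the second is bounded in absolute value by the modulus of continuity of $\phi$ evaluated at $\sup_{y\in\Sigma_k}|y-\pi_{M_{j_k}}(y)|\to 0$, since $\mu_k$ is supported on $\Sigma_k$. Combining the two yields $\int\phi\,d(\pi_{M_{j_k}\#}\mu_k)\to\int\phi\,d\mu=\int\phi\,d(\pi_{M\#}\mu)$, which is precisely the claim.

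The only delicate point is the uniform smallness of the projection displacement on $\supp\mu_k$: this rests crucially on the inclusion $M'\subset M$ coming from the canonical-minimizer construction of Proposition~\ref{prop_loc1Mprime}, without which $\Sigma_k$ could be far from $M$ and the above estimate would fail.
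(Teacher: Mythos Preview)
Your argument is correct and in fact slightly more direct than the paper's. The paper proves the lemma by first establishing that the projection maps $\pi_{M_{j_k}}$ converge \emph{uniformly on compact sets} to $\pi_M$ (using the $R$-uniform external ball condition to guarantee uniqueness of the limiting projection on a tubular neighborhood $U$ of $\partial M$, together with Lemma~\ref{lm_loc1Gamloc_A1add} to ensure $\supp\mu_k\subset U$), and then passes to the limit in $\int f\circ\pi_{M_{j_k}}\,d\mu_k$. You instead bypass the uniform convergence of the projection maps altogether: you observe that on $\supp\mu_k=\Sigma_k$ the displacement $|y-\pi_{M_{j_k}}(y)|=\dist(y,M_{j_k})$ tends to zero uniformly, simply because $\Sigma_k\to M'\subset M$ in Hausdorff distance and $d_H(M,M_{j_k})\to 0$, and this alone forces $\pi_{M_{j_k}\#}\mu_k-\mu_k\rightharpoonup 0$. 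Combined with $\pi_{M\#}\mu=\mu$ (since $\mu$ is concentrated on $M'\subset M$), this gives the claim. Your route avoids any discussion of uniqueness of projections or tubular neighborhoods; the paper's route, on the other hand, yields the stronger intermediate statement $\pi_{M_{j_k}}\to\pi_M$ uniformly on compacta, which is of some independent interest but is not needed here. One cosmetic point: for $y\in\Sigma_k$ the projection $\pi_{M_{j_k}}(y)$ need not be unique, but since $\Sigma_k$ is finite you can simply choose any nearest point, and your displacement estimate is unaffected by the choice.
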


\begin{proof}
There is an $R>0$ such that all $M_{j_k}$ and $M$ satisfy the $R$-uniform external ball condition. Let $U$ be an $R/2$-tubular neighborhood of $M$, i.e. $U:=(M)_{R/2}\setminus M$. In view of Lemma~\ref{lm_loc1Gamloc_A1add} one has $\varepsilon_k^-\to 0$ as $k\to \infty$, and therefore $\supp\mu_k\subset U$ for all sufficiently large $k\in\N$.
Further, the projection maps $\pi_{M_{j_k}}$ 
defined over $U$ converge uniformly over compact sets to $\pi_M$. To prove that it is enough to consider a sequence $x_k\to x$ and remark that $\pi_{M_{j_k}}(x_k)$ has a limit up to subsequences (since it is bounded). Call $y$ such a limit: it satisfies
$$d(x,M)=\lim_{k\to\infty} d(x_k,M_{j_k}))=\lim_{k\to\infty} |x_k-\pi_{M_{j_k}}(x_k)|=|x-y|,$$
which proves $y=\pi_M(x)$ (because the projection is unique for $x\in \bar U$). Hence, the limit being unique, we proved $\pi_{M_{j_k}}(x_k)\to \pi_M(x)$, which is equivalent to the uniform convergence over compact sets.

Thus, recalling that all
$\mu_k$ are assumed to be
concentrated over some ball, for every bounded $f\in C(\bar U)$ one has
\begin{align*}
\int_U f(x)\, d \pi_{M_{j_k}\#}\mu_k (x)&= \int_U f(\pi_{M_{j_k}}(x))\, d \mu_k(x)\to\\
& \int_U f(\pi_M(x))\, d \mu(x) = \int_U f(x)\, d \pi_{M\#}\mu (x),
\end{align*}
as $k\to \infty$, which is the desired assertion.
\end{proof}

\subsection{$\Gamma-\limsup$ inequality}\label{subsec_loc1limsup}

Now we prove the inequality for $\Gamma-\limsup$.

\begin{proposition}\label{prop_loc1Gamloc_limsup}
Assume $n\ge2$ and $\varphi\ll{\mathcal L}^n$.
Then, for every fixed $\mu$, there exists a sequence $\Sigma_k$ such that $\mu_{\Sigma_k}\deb\mu$ and
\begin{align*}
\limsup_k k^{1/n} & \left(
F(\Sigma_k)- \inf_A F(A)
\right)\leq G_\infty(\mu).
\end{align*}
\end{proposition}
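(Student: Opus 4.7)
The plan is to mirror the strategy of the $\Gamma-\liminf$ proof. The engine is Lemma \ref{lm_TilliMoscGammaloc} applied inside a well-chosen minimizer $M$ of \eqref{eq_loc1lim0P}, corrected by a surface grid on $\partial M$ which makes the contribution from $M^c$ negligible at the scale $k^{-1/n}$.

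We may assume $G_\infty(\mu)<+\infty$; otherwise any sequence $\Sigma_k$ with $\mu_{\Sigma_k}\deb\mu$ satisfies the bound trivially. Fix $\eta>0$ and, using Lemma \ref{lm_loc1_canonFinf1}, select a canonical minimizer $M$ of \eqref{eq_loc1lim0P} with $\supp\mu\subset M$ and $\Phi(\mu,M)\le G_\infty(\mu)+\eta$; under the standing hypothesis of Theorem \ref{th_loc1Gamloc}, Proposition \ref{prop_loc1extballLim0} gives that $M$ satisfies the uniform external ball condition. Apply Lemma \ref{lm_TilliMoscGammaloc} with $\varphi^+\res M$ in place of $\varphi^+$ to obtain sets $\Sigma'_m\subset M$ with $\#\Sigma'_m=m$ (by a routine adjustment of cardinality), $\mu_{\Sigma'_m}\deb\mu$, and
\[
\limsup_{m\to\infty}m^{1/n}\int_M\dist(x,\Sigma'_m)\,d\varphi^+(x)\le\Phi(\mu,M).
\]

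Now fix $\alpha\in(1/n,\,1/(n-1))$ and, by Corollary \ref{lm_loc1Gamloc_A3}, extract $\mathcal G_k\subset\partial M$ which is $k^{-\alpha}$-dense in $\partial M$ and has $\#\mathcal G_k\le Ck^{\alpha(n-1)}=o(k)$. Set $m_k:=k-\#\mathcal G_k$ and define $\Sigma_k:=\Sigma'_{m_k}\cup\mathcal G_k$, adjusting by at most one point so that $\#\Sigma_k=k$. Only $o(k)$ atoms have been added, so the weak-$*$ convergence $\mu_{\Sigma_k}\deb\mu$ is preserved. To estimate $F(\Sigma_k)-F(M)$, decompose it as
\[
F(\Sigma_k)-F(M)=\int_M\dist(x,\Sigma_k)\,d\varphi(x)+\int_{M^c}\bigl(\dist(x,\Sigma_k)-\dist(x,M)\bigr)\,d\varphi(x).
\]
Since $\mathcal G_k\subset\Sigma_k\subset M$, for $\mathcal L^n$-a.e.\ $x\in M^c$ the projection $\pi_M(x)\in\partial M$ is well defined and
\[
\dist(x,M)\le\dist(x,\Sigma_k)\le|x-\pi_M(x)|+\dist(\pi_M(x),\mathcal G_k)\le\dist(x,M)+k^{-\alpha};
\]
hence the $M^c$ integral is bounded in absolute value by $k^{-\alpha}|\varphi|(M^c)=o(k^{-1/n})$ using $\varphi\ll\mathcal L^n$. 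In the $M$-integral, $-\int_M\dist(x,\Sigma_k)\,d\varphi^-\le0$ may be discarded, while $\int_M\dist(x,\Sigma_k)\,d\varphi^+\le\int_M\dist(x,\Sigma'_{m_k})\,d\varphi^+$ since $\Sigma'_{m_k}\subset\Sigma_k$. Combined with $m_k/k\to1$, the Tilli--Mosconi bound yields $\limsup_k k^{1/n}\int_M\dist(x,\Sigma_k)\,d\varphi^+\le\Phi(\mu,M)\le G_\infty(\mu)+\eta$.

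Assembling the two estimates and using $F(M)=\inf_A F(A)$ gives $\limsup_k k^{1/n}(F(\Sigma_k)-\inf_A F(A))\le G_\infty(\mu)+\eta$, and a standard diagonal argument as $\eta\to0$ concludes. The delicate step is the design of the boundary grid: $\alpha$ must lie in the narrow window $(1/n,1/(n-1))$, which is non-empty for every $n\ge2$ and simultaneously makes $k^{-\alpha}=o(k^{-1/n})$ (so the $M^c$ perturbation does not enter at leading order) and $\#\mathcal G_k=o(k)$ (so the Tilli–Mosconi leading constant on $M$ is not corrupted and the empirical measures still converge weakly-$*$ to $\mu$).
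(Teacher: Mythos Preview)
Your proof follows essentially the same strategy as the paper's: pick a near-optimal canonical minimizer $M$, apply the Tilli--Mosconi lemma inside $M$, add a boundary grid with spacing $k^{-\alpha}$ for $\alpha\in(1/n,1/(n-1))$, and show that the contribution from $M^c$ is $o(k^{-1/n})$. The decomposition, the choice of $\alpha$, and the final diagonal argument are all the same.

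The one place where you diverge is that you put the surface grid directly on $\partial M$ by invoking Corollary~\ref{lm_loc1Gamloc_A3}. As stated, however, that corollary applies only to sets of the form $\big(\bigcup_{x\in K}B_{r_x}(x)\big)^c$ with $K$ \emph{finite}, whereas the canonical minimizer $M$ has $K=\supp\varphi^-$, which need not be finite. The paper handles this by routing through the finite-ball approximations $M_{j_k}$ built in Subsection~\ref{subsec_loc1liminf}: one chooses $j_k$ fast enough that $d_H(\partial M_{j_k},\partial M)\le k^{-\alpha}$, places $\mathcal G_{Surf}$ on $\partial M_{j_k}$ (where the corollary applies verbatim), and then the bound on $M^c$ becomes $|\dist(x,\Sigma_k)-\dist(x,M)|\le 2k^{-\alpha}$ instead of $k^{-\alpha}$. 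Your shortcut is morally fine---the perimeter bound of Corollary~\ref{co_loc1Gamloc_A3_0a} does extend to compact $K$, and the counting argument of Lemma~\ref{lm_loc1Gamloc_A4} goes through with it---but as written you are invoking a lemma outside its stated hypotheses, so you should either justify the extension in one line or pass through $M_{j_k}$ as the paper does.
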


\begin{proof}
Recall that due to Lemma~\ref{lm_loc1_canonFinf1}
one has
\begin{align*}
G_\infty(\mu) & = \inf\left\{ \Phi(\mu, A)\,:\, \supp\mu\subset A, \, A \mbox{ is a canonical minimizer to~\eqref{eq_loc1lim0P}}
\right\},
\end{align*}
and choose a canonical minimizer $M$ to~\eqref{eq_loc1lim0P} such that
\[
\Phi(\mu, M)\leq G_\infty(\mu)+\varepsilon.
\]
We make use of the constructions of the sets $M_{j}$ made in Subsection~\ref{subsec_loc1liminf}. Choose (up to passing to a subsequence of $k$) the sets $M_{j_k}$ such that
\[
d_H(\partial M_{j_k}, \partial M)\leq k^{-\alpha}.
\]
Let $\tilde\Sigma_k\subset M$ be such that
\[
\limsup_k (\#(\tilde\Sigma_k)^{1/n} \int_M \dist(x, \tilde\Sigma_k)\, d\varphi^+(x)\leq
\Phi(\mu, M),
\]
where $\#(\tilde\Sigma_k)\to\infty $ will be chosen in a moment.
Let also
\[
\Sigma_k:=\tilde\Sigma_k \cup \mathcal{G}_{Surf}(M_{j_k}, k^{-\alpha}),
\]
where $ \mathcal{G}_{Surf}(M_{j_k}, k^{-\alpha})$ is a $k^{-\alpha}$-net inside
$\partial M_{j_k}$ constructed again according to Lemma~\ref{lm_loc1Gamloc_A3}.

Take now  $\tilde\Sigma_k$ with $\#\tilde\Sigma_k$ being such that
\[
\#\left(\tilde\Sigma_k\cup  \mathcal{G}_{Surf}(M_{j_k}, k^{-\alpha})\right)=k.
\]
Since $\#\mathcal{G}_{Surf}(M_{j_k}, k^{-\alpha}=o(k)$, this implies $\#(\tilde\Sigma_k)=k-o(k)$ and hence we also have
\[
\limsup_k k^{1/n} \int_M \dist(x, \tilde\Sigma_k)\, d\varphi^+(x)\leq
\Phi(\mu, M).
\]

By construction then
\[
|\dist(x, \Sigma_k)-\dist(x,M)|\leq 2 k^{-\alpha}
\]
for all $x\not \in M$. Hence,
$$
k^{1/n}\left|\int_{M^c}(\dist(x,\Sigma_k)-\dist(x,M))\,\varphi(x)\right|
\le Ck^{1/n-\alpha}=o(1)
$$
as $k\to\infty$. Thus
\begin{align*}
k^{1/n}\left(F(\Sigma_k)-\inf_A F(A)\right)
&=k^{1/n}\big(F(\Sigma_k)-F(M)\big)\\
&=k^{1/n}\int_M \dist(x, \tilde\Sigma_k)\,d\varphi^+(x)\\
&\quad+k^{1/n}\int_{M^c}(\dist(x,\Sigma_k)-\dist(x,M))\,\varphi(x)\\
&=k^{1/n}\int_M\dist(x,\tilde\Sigma_k)\,d\varphi^+(x)+o(1)
\end{align*}
as $k\to\infty$.
\end{proof}

%
%
%
%
%

\appendix
\section{Some properties of sets satisfying a uniform \\ external ball condition}\label{sec_loc1unifball}

In this section we collect some properties of sets satisfying a uniform external ball condition, specifically of those we are dealing in this paper.

\begin{definition}\label{def_loc1_uball}
We say that a closed set $M\subset\R^n$ satisfies the uniform $R$-external ball condition, for given $R>0$, if for every $x\in\partial M$ there is a ball $B_R(z)$ of radius $R$ touching $M$ at $x$, i.e. such that $\bar B_R(z)\cap M=\{x\}$.
If not necessary, the reference to $R$ will be omitted and we just speak about uniform external ball condition.
\end{definition}

We start with a rather weak result which however is proven here for the sake of completeness.

\begin{lemma}\label{lm_loc1_ub0}
Let $M$ be a set satisfying the $R$-uniform external ball condition. Then ${\mathcal L}^n(\partial M)=0$.
\end{lemma}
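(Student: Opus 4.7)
The plan is to use a Lebesgue density argument. The external ball condition will give a uniform \emph{upper} bound on the density of $M$ at every boundary point, while the Lebesgue density theorem gives a \emph{lower} bound (density $1$) at almost every point of $M$; since $\partial M\subset M$, these two estimates are compatible only if $\mathcal{L}^n(\partial M)=0$.

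More precisely, first I would fix $x\in\partial M$ and, by the $R$-uniform external ball condition, pick a ball $B_R(z_x)$ with $\bar B_R(z_x)\cap M=\{x\}$, so in particular $B_R(z_x)\subset M^c$. For every $0<r<R$ one then has
\[
|B_r(x)\cap M^c|\ge |B_r(x)\cap B_R(z_x)|.
\]
The right-hand side is the volume of a spherical cap of the ball $B_R(z_x)$ cut out by $B_r(x)$; an elementary computation (or simply a blow-up argument: after rescaling by $1/r$, the ball $B_R(z_x)$ converges to a half-space through $x$ with inner normal $(z_x-x)/R$) shows
\[
\lim_{r\to 0^+}\frac{|B_r(x)\cap B_R(z_x)|}{|B_r(x)|}=\frac{1}{2}.
\]
Consequently the upper density of $M$ at every point $x\in\partial M$ satisfies
\[
\limsup_{r\to 0^+}\frac{|B_r(x)\cap M|}{|B_r(x)|}\le \frac{1}{2}.
\]

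Next I would invoke the Lebesgue density theorem applied to the measurable set $M$ (which is closed, hence measurable): at $\mathcal{L}^n$-a.e.\ $x\in M$ the density of $M$ equals $1$. Since $M$ is closed, $\partial M\subset M$, so if it were the case that $\mathcal{L}^n(\partial M)>0$, then $\mathcal{L}^n$-a.e.\ $x\in\partial M$ would have Lebesgue density $1$ in $M$, contradicting the upper bound $\tfrac{1}{2}$ obtained above. Therefore $\mathcal{L}^n(\partial M)=0$.

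The only nontrivial step is the density bound at boundary points; everything else is a routine application of standard measure-theoretic tools. There is no real obstacle: the external ball gives an explicit subset of $M^c$ inside every small $B_r(x)$, and the half-space blow-up makes the $\tfrac{1}{2}$-density bound transparent.
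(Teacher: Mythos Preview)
Your argument is correct and is essentially identical to the paper's proof: the paper also shows that the upper density of $M$ at any $x\in\partial M$ is at most $\tfrac12$ (via $B_r(x)\cap M\subset B_r(x)\setminus B_R(z_x)$) and then concludes by the Lebesgue density theorem that $\mathcal{L}^n(\partial M)=0$. Your write-up is slightly more explicit in spelling out the contradiction, but the idea and the key estimate are the same.
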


\begin{proof}
For every $x\in\partial M$, denoting by $B_R(0)$ the ball of radius $R$ touching $\partial M$ in $x$, we have
\[
\limsup_{r\to 0^+} \frac{{\mathcal L}^n (B_r(x)\cap M)}{\omega_n r^n}\leq
\limsup_{r\to 0^+} \frac{{\mathcal L}^n (B_r(x)\setminus B_R(0))}{\omega_n r^n} = \frac 1 2,
\]
which means that $x$ is not a Lebesgue point of the characteristic function $1_M$, and thus shows the claim.
\end{proof}

%
%
Further throughout this section let $K\subset \R^n$ be a compact set, and
let
\[
M:= \left(\bigcup_{x\in K} B_{r_x}(x)\right)^c,
\]
where $r_x:= \dist(x, M)$ satisfies the estimate
$r_x\geq R>0$ for some $R>0$ and for all $x\in K$.
Clearly $M$ satisfies the $R$-uniform external ball condition.

Let $\sigma_k\searrow 0$, and let $\{x_i\}_{i\in I^k}\subset K$ be
finite $\sigma_k$-nets of $K$.
\[
M_k:=\left(\bigcup_{i\in I^k} B^c_{r_{x_i}} (x_i)\right)^c.
\]
The following assertions hold.

\begin{lemma}\label{lm_loc1Gamloc_MkM}
One has $M_k \to M$ in the sense of Hausdorff as $k\to \infty$.
\end{lemma}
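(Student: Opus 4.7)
The plan is to split the Hausdorff distance into two halves, one of which is trivial by a monotonicity observation, and the other of which I would establish by a bounded compactness and contradiction argument exploiting the $\sigma_k$-net property.

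First, since $\{x_i\}_{i\in I^k}\subset K$, the monotonicity
\[
\bigcup_{i\in I^k}B_{r_{x_i}}(x_i)\subset \bigcup_{x\in K}B_{r_x}(x)
\]
holds, which (after taking complements) gives $M\subset M_k$ for every $k$. Thus $\sup_{z\in M}\dist(z,M_k)=0$ and the whole content reduces to proving $\sup_{y\in M_k}\dist(y,M)\to 0$. Next, I would observe that $M\ne\emptyset$ (implicit in $r_x\geq R>0$) and that by $1$-Lipschitzianity of $\dist(\cdot,M)$ together with compactness of $K$ the radii $r_x=\dist(x,M)$ are uniformly bounded on $K$; hence $\bigcup_{x\in K}B_{r_x}(x)$ is bounded, and it contains $M_k\setminus M$ for every $k$.

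I would then proceed by contradiction. Suppose $d_H(M_k,M)\not\to 0$: along a subsequence (not relabeled) there exist $y_k\in M_k$ with $\dist(y_k,M)\geq\varepsilon>0$. Since $y_k\notin M$, each $y_k$ lies in the bounded region above, so up to passing to a further subsequence $y_k\to y$. Continuity of $\dist(\cdot,M)$ yields $\dist(y,M)\geq\varepsilon$, so $y\notin M$, and hence there exists $x\in K$ with $|y-x|<r_x$ strictly (the balls in the union are open).

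Finally, I would close the contradiction using the net. Choose $x_{i(k)}\in\{x_i\}_{i\in I^k}$ with $|x-x_{i(k)}|\leq\sigma_k\to 0$; then $x_{i(k)}\to x$ and by $1$-Lipschitzianity $r_{x_{i(k)}}=\dist(x_{i(k)},M)\to r_x$. On the other hand $|y_k-x_{i(k)}|\to|y-x|<r_x$, so for $k$ large enough
\[
|y_k-x_{i(k)}|<r_{x_{i(k)}},
\]
i.e.\ $y_k\in B_{r_{x_{i(k)}}}(x_{i(k)})\subset M_k^c$, contradicting $y_k\in M_k$. The only delicate point in the argument is to preserve the strict inequality $|y-x|<r_x$ (which requires the union in the definition of $M$ to be of open balls) and to ensure the bounded region is large enough to keep a compactness of the sequence $y_k$ available; both are handled by the preliminary boundedness step.
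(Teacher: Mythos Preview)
Your argument is correct and follows essentially the same route as the paper: both use $M\subset M_k$ for one half, and for the other half pick for a given $x\in K$ a net point $x_{i(k)}$ with $x_{i(k)}\to x$, exploit the $1$-Lipschitz continuity of $x\mapsto r_x=\dist(x,M)$ to get $r_{x_{i(k)}}\to r_x$, and pass to the limit in the defining inequality for $M_k$. The only cosmetic difference is that the paper argues directly (showing that any convergent sequence $y_k\in M_k$ has its limit in $M$), while you phrase it as a contradiction; your version is in fact slightly more careful, since you make explicit the boundedness of $M_k\setminus M$ that guarantees the extraction of a convergent subsequence.
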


\begin{proof}
Let $y_k\in M_k$ and $y_k\to y$ as $k\to\infty$. Clearly, then $d(y_k,x_i)\geq r_{x_i}$ for all $i\in I^k$. But for all $x\in K$ there is an $x_{i_k}$ with $i_k\in I^k$ such that $d(x,x_{i_k})\leq \sigma_k$. Hence, $d(y_k,x_{i_k})\geq r_{x_{i_k}}$, and passing to a limit as $k\to\infty$ (mind that $x\mapsto r_x$ is continuous), we get
\[
|y-x|\geq r_x,
\]
for all $x\in K$,
which means that $y\in M$. To conclude the proof it remains to observe that
$M\subset M_k$.
\end{proof}

%

\begin{proposition}\label{prop_loc1Gamloc_A3_0a}
Let
\[
A:=\left(\bigcup_{x\in K} B_{r_x}(x)\right)^c,
\]
where $K\subset \R^n$ is a finite set (i.e.\ $\# K <\infty$)
and $r_x\geq R$ for some $R>0$, and for all $x\in K$.
Then
\[
\HH^{n-1} (\partial A)\leq C:=n\omega_n(\diam K+R')^n/R,
\]
where $R':=\max_{z\in K} r_z$ and $\omega_n$ stands for the volume
of the unit ball in $\R^n$.
\end{proposition}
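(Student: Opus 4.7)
The strategy is a tubular neighborhood / area-formula computation, exploiting the external ball condition which $A$ inherits from the assumption $r_x\ge R$. The geometric picture is the following: at every smooth point $y\in\partial A$ there is an inward (into $A^c$) normal segment of length $R$ that lies inside the tangent external ball; the map which traces out this segment is injective and has a quantitative lower bound on its Jacobian, so the volume of $A^c$ (which sits inside a ball of radius $\diam K + R'$) controls $\HH^{n-1}(\partial A)$.

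First, I would note that $\partial A\subset\bigcup_{x\in K}\partial B_{r_x}(x)$ and that the singular locus (points that lie on two or more of the spheres $\partial B_{r_x}(x)$) is a finite union of sets of dimension at most $n-2$, hence $\HH^{n-1}$-negligible. Call $\Gamma$ the smooth part of $\partial A$; at $y\in\Gamma\cap\partial B_{r_x}(x)$ the outward normal from $A$ is $\nu(y)=(x-y)/r_x$, and the ball $B_R(y+R\nu(y))$ is contained in $B_{r_x}(x)\subset A^c$ and meets $A$ only at $y$ by construction of $A$ (in particular $A$ satisfies the $R$-uniform external ball condition). Define the tube map
\[
\Phi:\Gamma\times[0,R]\to A^c,\qquad \Phi(y,s)=y+s\nu(y).
\]
Since the closed ball $\bar B_R(y+R\nu(y))$ meets $A$ exactly at $y$, an elementary check (based on the internal tangency of $\partial B_s(\Phi(y,s))$ and $\partial B_R(y+R\nu(y))$ at $y$) yields $\dist(\Phi(y,s),A)=s$ and shows that $y$ is the unique nearest point in $A$. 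Consequently $\Phi$ is injective on $\Gamma\times[0,R]$: from $z=\Phi(y,s)$ one recovers $s=\dist(z,A)$ and $y=\pi_A(z)$.

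Next I would compute the Jacobian of $\Phi$. At a smooth point $y\in\Gamma\cap\partial B_{r_x}(x)$ the map $\Phi(\cdot,s)$ sends $\Gamma$ locally onto the parallel sphere $\partial B_{r_x-s}(x)$, uniformly contracting tangential lengths by the factor $(1-s/r_x)$; together with the unit speed in the $s$-direction this gives the $n$-dimensional Jacobian
\[
J_\Phi(y,s)=(1-s/r_x)^{n-1}\ \ge\ (1-s/R)^{n-1},
\]
where the inequality follows from $r_x\ge R$. By the area formula and injectivity of $\Phi$,
\[
\vol\bigl(\Phi(\Gamma\times[0,R])\bigr)=\int_\Gamma\!\int_0^R J_\Phi(y,s)\,ds\,d\HH^{n-1}(y)\ \ge\ \HH^{n-1}(\Gamma)\int_0^R(1-s/R)^{n-1}ds=\frac{R}{n}\HH^{n-1}(\partial A).
\]

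To conclude, I would bound the left-hand side by $\vol(A^c)$, and $\vol(A^c)$ in turn by the volume of an enclosing ball. Fixing any $x_0\in K$, each $B_{r_x}(x)\subset B_{|x-x_0|+r_x}(x_0)\subset B_{\diam K+R'}(x_0)$, so $A^c\subset B_{\diam K+R'}(x_0)$ and $\vol(A^c)\le\omega_n(\diam K+R')^n$. Combining with the previous inequality,
\[
\HH^{n-1}(\partial A)\ \le\ \frac{n}{R}\vol(A^c)\ \le\ \frac{n\omega_n(\diam K+R')^n}{R}.
\]

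The principal technical point is verifying the injectivity of $\Phi$ and the identity $\dist(\Phi(y,s),A)=s$ using only the $R$-external ball condition; once this is in place, the Jacobian bound is forced by the curvature of a sphere of radius $\ge R$ and the area formula delivers the estimate directly. Handling the singular part of $\partial A$ is a non-issue since it has $\HH^{n-1}$-measure zero and can simply be discarded.
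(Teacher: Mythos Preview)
Your proof is correct and rests on the same geometric idea as the paper's: sweep $\partial A$ inward into $A^c$ along radial segments and compare the swept volume with $\mathcal{L}^n(A^c)\le\omega_n(\diam K+R')^n$, producing the factor $R/n$. The paper's execution, however, is more elementary and exploits the explicit ball structure: it decomposes $\partial A$ into the spherical caps $S_z:=\partial B_{r_z}(z)\setminus\bigcup_{u\ne z}B_{r_u}(u)$, notes that the solid cones $T(S_z)=\bigcup_{y\in S_z}[z,y]$ have pairwise disjoint interiors contained in $A^c$, and uses the exact cone--volume formula $\mathcal{L}^n(T(S_z))=r_z\HH^{n-1}(S_z)/n\ge (R/n)\HH^{n-1}(S_z)$ to conclude. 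You replace this by the tube map $\Phi(y,s)=y+s\nu(y)$ on $\Gamma\times[0,R]$, trading the disjointness of cones for the injectivity of $\Phi$ (obtained via uniqueness of the nearest-point projection under the $R$-external ball condition) and the cone formula for the Jacobian bound $(1-s/r_x)^{n-1}\ge(1-s/R)^{n-1}$ together with the area formula. The net effect is the same inequality; the paper's route avoids any differential-geometric machinery, while yours uses only the uniform external ball condition and would therefore carry over verbatim to any closed set satisfying it, not just complements of finitely many balls.
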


\begin{proof}
For every set $S\subset \partial B_r(x)$ the volume ${\mathcal L}^n (T(S))$ of
the conical segment
\[
T(S):= \bigcup_{y\in S} [x,y]
\]
is given by
\[
{\mathcal L}^n (T(S)) = \HH^{n-1}(S)r/n.
\]
Letting
\[
S_z:=\partial B_{r_z}(z)\setminus \bigcup_{u\in K, u\neq z} B_{r_u}(u),
\]
we have that all the internal parts of $T(S_z)$ are disjoint and
\[
\bigcup_{z\in K} T(S_z)\subset A^c=
\bigcup_{z\in K} B_{r_z}(z)\subset B_{\diam K+R'}(x),
\]
where $x\in K$ is some (arbitrary) point of $K$.
Thus
\begin{align*}
\frac{R\HH^{n-1}(\partial A)}{n} & =\frac{R}{n} \sum_{z\in K} \HH^{n-1}(S_z)
\leq
\sum_{z\in K} \frac{ r_z \HH^{n-1}(S_z)}{n}\\
& ={\mathcal L}^n\left( \bigcup_{z\in K} T(S_z)\right)
\leq \omega_n(\diam K+R')^n,
\end{align*}
which gives the desired claim.
\end{proof}

\begin{lemma}\label{lm_loc1Gamloc_A7a}
One has $1_{M_k}(x)\to 1_M(x)$ for all $x\not\in\partial M$ (hence for a.e. $x\in\R^n$).
\end{lemma}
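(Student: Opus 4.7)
The plan is to split by whether $x$ lies in the interior of $M$ or in $M^c$; since $x\notin\partial M$, these are the only possibilities. The inclusion $M\subset M_k$, already observed at the end of Lemma~\ref{lm_loc1Gamloc_MkM}, handles the first case instantly: if $x\in M$, then $x\in M_k$ for every $k$, so $1_{M_k}(x)=1=1_M(x)$ with no limit needed.

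The substantive case is $x\in M^c$, which is open because $M$ is closed. Then $x\in\bigcup_{y\in K}B_{r_y}(y)$, so some $y^*\in K$ satisfies $\delta:=r_{y^*}-|x-y^*|>0$. Using that $\{x_i\}_{i\in I^k}$ is a $\sigma_k$-net of $K$, pick $x_{i_k}$ with $|y^*-x_{i_k}|\le\sigma_k$. Since $z\mapsto r_z=\dist(z,M)$ is $1$-Lipschitz, $r_{x_{i_k}}\ge r_{y^*}-\sigma_k$, and therefore
\[
r_{x_{i_k}}-|x-x_{i_k}|\ge (r_{y^*}-\sigma_k)-(|x-y^*|+\sigma_k)=\delta-2\sigma_k>0
\]
as soon as $\sigma_k<\delta/2$. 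Hence $x\in B_{r_{x_{i_k}}}(x_{i_k})\subset M_k^c$ for all large $k$, giving $1_{M_k}(x)=0=1_M(x)$ eventually.

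The ``hence for a.e.\ $x$'' clause follows immediately from Lemma~\ref{lm_loc1_ub0}, which gives ${\mathcal L}^n(\partial M)=0$ because $M$ satisfies the uniform external ball condition. I do not foresee a real obstacle: the argument rests solely on the $1$-Lipschitz regularity of $z\mapsto\dist(z,M)$ (automatic from the definition of $r_z$) and on the net property $\sigma_k\to0$.
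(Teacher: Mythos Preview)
Your proof is correct and follows the same decomposition as the paper: split according to whether $x$ lies in $M$ or in $M^c$, use the inclusion $M\subset M_k$ for the first case, and Lemma~\ref{lm_loc1_ub0} for the a.e.\ conclusion. The only difference is in the handling of $x\in M^c$: the paper simply invokes the Hausdorff convergence $M_k\to M$ from Lemma~\ref{lm_loc1Gamloc_MkM} (if $\dist(x,M)>0$ and $d_H(M_k,M)<\dist(x,M)$ then $x\notin M_k$), whereas you unwind this into an explicit $\sigma_k$-net argument using the $1$-Lipschitz property of $z\mapsto r_z$. Your version is a bit more self-contained but is essentially reproving, pointwise, the relevant half of Lemma~\ref{lm_loc1Gamloc_MkM}; either way the content is the same.
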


\begin{proof}
To show the first statement, denote
\[
1_A(x):=\limsup_k 1_{M_k}(x),\qquad
1_B(x):=\liminf_k 1_{M_k}(x).
\]
Consider now a point $x\not \in M$; from the Hausdorff convergence $M_k\to M$ we deduce that, for all sufficiently large values of $k$, $x\notin M_k$, hence
$1_B(x)=1_A(x)=1_M(x)=0$, which implies $B\subset A\subset M$. On the other hand, if $x\in M\setminus B$, then $x\not \in M_k$ for an infinite sequence
of $k$, hence $x$ cannot belong to the inner part of $M$, i.e. $x\in\partial M$. This shows $M\setminus B\subset\partial M$. Hence, for all $x\not \in \partial M$, one has
\[
\lim_k 1_{M_k}(x) =1_M(x)
\]
as claimed. The fact that this convergence is true a.e. over $\R^n$ follows from Lemma~\ref{lm_loc1_ub0}.
\end{proof}

\begin{corollary}\label{co_loc1Gamloc_A3_0a}
One has $1_{M_k^c}\to 1_{M^c}$ strongly in $L^1(\R^n)$ and weakly in $BV(\R^n)$ as $k\to\infty$. In particular, ${\mathcal L}^n(M_k\Delta M)\to0$. Setting $R':=\max_{z\in K}\dist(z,M)$, we have as a consequence
\[
\HH^{n-1} (\partial M)\leq C:=n\omega_n(\diam K+R')^n/R.
\]
\end{corollary}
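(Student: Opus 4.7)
The plan is to establish the four assertions in turn, leveraging the approximation $M_k$ together with the bound already proved for finite point clouds in Proposition~\ref{prop_loc1Gamloc_A3_0a}.

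For the strong $L^1$-convergence, Lemma~\ref{lm_loc1Gamloc_A7a} already delivers $1_{M_k^c}\to 1_{M^c}$ pointwise almost everywhere on $\R^n$. Moreover, since every point of $M_k^c$ belongs to some ball $B_{r_{x_i}}(x_i)$ with $x_i\in K$ and $r_{x_i}\le R'$, one has $M_k^c\subset B_{\diam K+R'}(x_0)$ for any fixed $x_0\in K$, uniformly in $k$. The sequence $(1_{M_k^c})$ is thus dominated by the indicator of a single bounded ball, and the dominated convergence theorem gives strong $L^1$-convergence. This is exactly the assertion ${\mathcal L}^n(M_k\Delta M)\to 0$.

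For the weak $BV$-convergence, I would apply Proposition~\ref{prop_loc1Gamloc_A3_0a} to each $M_k$: the finite set $\{x_i\}_{i\in I^k}\subset K$ has diameter at most $\diam K$ and radii in $[R,R']$, which gives
\[
\HH^{n-1}(\partial M_k)\le\frac{n\omega_n(\diam K+R')^n}{R}=C
\]
uniformly in $k$. Because $M_k^c$ is a finite union of open balls, its total variation equals its perimeter and satisfies $|D 1_{M_k^c}|(\R^n)=\HH^{n-1}(\partial^* M_k^c)\le\HH^{n-1}(\partial M_k)\le C$. The sequence $(1_{M_k^c})$ has therefore uniformly bounded $BV$-norm, and combined with the strong $L^1$-convergence already obtained, the standard $BV$-compactness theorem (Theorem~3.23 from~\cite{AmbrFuscoPall00}) yields $1_{M_k^c}\rightharpoonup 1_{M^c}$ weakly in $BV(\R^n)$.

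For the estimate on $\HH^{n-1}(\partial M)$, lower semicontinuity of the total variation under $L^1$-convergence gives
\[
\mathrm{Per}(M^c)=|D 1_{M^c}|(\R^n)\le\liminf_k|D 1_{M_k^c}|(\R^n)\le C.
\]
The main obstacle now is to upgrade this perimeter bound, which a priori controls only the reduced boundary $\partial^*M^c$, to the desired bound on the topological boundary $\partial M$. This is exactly where the uniform $R$-external ball condition on $M$ enters: every $x\in\partial M$ lies on the boundary of some $B_R(z)\subset M^c$, which provides a uniform lower density bound of $M^c$ at $x$ and prevents any cuspidal behaviour, so that $\HH^{n-1}(\partial M\setminus\partial^*M^c)=0$. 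With this identification one has $\HH^{n-1}(\partial M)=\mathrm{Per}(M^c)\le C$, completing the proof.
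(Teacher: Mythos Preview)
Your proof is correct and follows essentially the same strategy as the paper: bound the perimeters of the $M_k$ uniformly via Proposition~\ref{prop_loc1Gamloc_A3_0a}, obtain $L^1$ and weak-$BV$ convergence, and pass to the limit using lower semicontinuity of the total variation. The only noteworthy difference is that you derive the $L^1$ convergence directly from Lemma~\ref{lm_loc1Gamloc_A7a} and dominated convergence, whereas the paper first invokes $BV$ compactness to get $L^1$ precompactness and then identifies the limit; your route is slightly more direct, and you also make explicit the passage from $\mathrm{Per}(M^c)$ to $\HH^{n-1}(\partial M)$ via the external ball condition, a step the paper simply asserts.
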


\begin{proof}
Observe that
\[
M_k^c\subset \Omega:=B_{\diam K+R'}(y),
\]
where $R':=\max_{z\in K} \dist(z,M)$ and
$y\in K$ is an arbitrary point, so that
\[
\|1_{M_k^c}\|_1\leq C.
\]
Keeping in mind  that
\[
|D1_{M_k^c}|(\R^n)= \HH^{n-1} (\partial M_k)\leq C,
\]
by Proposition~\ref{prop_loc1Gamloc_A3_0a}, one has that the sequence $\{1_{M_k^c}\}$ is (weakly) compact in $BV(\Omega)$, hence strongly compact in $L^1(\Omega)$. Consider any convergent subsequence (not relabeled). It is converging in $L^1(\Omega)$, and the limit has to be a characteristic function of some set, which, as just proved, must be $1_{M^c}$. Hence the whole sequence $\{1_{M_k^c}\}$ is converging in $L^1(\Omega)$ to $1_{M^c}$, which is the first claim of the statement being proven.

As for the second claim, we use lower semicontinuity of the total variation, obtaining
\begin{align*}
\HH^{n-1} (\partial M)=|D1_{M^c}|(\R^n)&\leq \liminf_k |D1_{M_k^c}|(\R^n)\\ & =\liminf_k\HH^{n-1}(\partial M_k)\\
&\le\liminf_k n\omega_n(\diam K+R'_k)^n/R\\
&=n\omega_n(\diam K+R')^n/R,
\end{align*}
where $R'_k:=\max_{i\in I^k} r_{x_i}$.
\end{proof}

\begin{proposition}\label{prop_loc1Gamloc_A3_0b}
Assume that $\tilde M_k$ satisfy $\tilde M_k= \left(\bigcup_{x\in K} B_{\dist(x,\tilde M_k)}(x)\right)^c$, where for every $x\in K$ one has $\dist(x,\tilde M_k)\ge R$,
and $\tilde M_k\to \tilde M$ in Hausdorff distance as $k\to \infty$, for some $\tilde M\subset\R^n$. Let then
\[
\hat M:=\left(\bigcup_{x\in K}B_{r_x}(x)\right)^c,
\]
where $r_x:= \dist(x, \tilde M)$.
Then $r_x\geq R>0$ for all $x\in K$ and
\begin{itemize}
\item[(i)] $\tilde M\subset \hat M$, while $\hat M\setminus \tilde M\subset \partial\hat M$ (so in particular, $\mathcal{L}^n(\hat M\setminus \tilde M)=0)$;

\item[(ii)] $1_{\tilde M_k}(x)\to 1_{\tilde M}(x)$
for all $x\not\in \partial \tilde M$ (hence for a.e. $x\in \R^n$).
\end{itemize}
\end{proposition}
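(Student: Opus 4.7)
The plan is to deduce~(i) by combining elementary properties of Hausdorff limits with the particular structure of $\tilde M_k$, and then derive~(ii) as a consequence. First, since Hausdorff convergence of closed sets yields uniform convergence of the associated distance functions on compact sets, one has $r_x=\lim_k\dist(x,\tilde M_k)\ge R$ for every $x\in K$, and $\tilde M$ is itself closed. For any $y\in\tilde M$ and any $x\in K$, $|y-x|\ge\dist(x,\tilde M)=r_x$, so $y\notin B_{r_x}(x)$ for every $x\in K$, whence $\tilde M\subset\hat M$.

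The main content of~(i) is to show $\hat M\setminus\tilde M\subset\partial\hat M$. Fix $y\in\hat M\setminus\tilde M$; since $\tilde M$ is closed, $\dist(y,\tilde M)>0$, and Hausdorff convergence gives $y\notin\tilde M_k$ for all large $k$. By the defining formula for $\tilde M_k$, there exists $x_k\in K$ with $|y-x_k|<\dist(x_k,\tilde M_k)$. Compactness of $K$ allows us to extract $x_k\to x_\infty\in K$; passing to the limit, using continuity of $x\mapsto\dist(x,\tilde M)$ together with $\dist(x_k,\tilde M_k)\to\dist(x_\infty,\tilde M)=r_{x_\infty}$, yields $|y-x_\infty|\le r_{x_\infty}$. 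Since $y\in\hat M$ forces $|y-x_\infty|\ge r_{x_\infty}$, we obtain equality, so $y\in\partial B_{r_{x_\infty}}(x_\infty)$; the points $y+t(x_\infty-y)$ with small $t>0$ lie in $B_{r_{x_\infty}}(x_\infty)\subset\hat M^c$, hence $y\in\partial\hat M$. As $\hat M$ satisfies the $R$-uniform external ball condition by construction (every boundary point belongs to some $\partial B_{r_x}(x)$ with $r_x\ge R$), Lemma~\ref{lm_loc1_ub0} gives ${\mathcal L}^n(\partial\hat M)=0$, and in particular ${\mathcal L}^n(\hat M\setminus\tilde M)=0$.

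For~(ii), if $y\notin\tilde M$ then $\dist(y,\tilde M)>0$ and Hausdorff convergence yields $y\notin\tilde M_k$ eventually, so $1_{\tilde M_k}(y)=0=1_{\tilde M}(y)$. If $y\in\mathrm{int}(\tilde M)$, suppose for contradiction that $y\notin\tilde M_k$ along a subsequence; repeating the argument of~(i) produces $x_\infty\in K$ with $|y-x_\infty|=r_{x_\infty}$, so $y\in\partial\hat M$. But from $\hat M\setminus\tilde M\subset\partial\hat M$ we deduce $\mathrm{int}(\hat M)\subset\tilde M$, and since $\mathrm{int}(\hat M)$ is open, in fact $\mathrm{int}(\hat M)\subset\mathrm{int}(\tilde M)$. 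Combined with $\tilde M\subset\hat M$ this gives $\partial\tilde M\subset\partial\hat M$, so $y\in\mathrm{int}(\tilde M)$ rules out $y\in\partial\hat M$, a contradiction. The ``a.e.\ in $\R^n$'' part of~(ii) then follows from ${\mathcal L}^n(\partial\tilde M)\le{\mathcal L}^n(\partial\hat M)=0$.

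The main obstacle is the geometric argument in the second paragraph: extracting a limit point $x_\infty\in K$ and showing that $y$ must sit exactly on the bounding sphere $\partial B_{r_{x_\infty}}(x_\infty)$, which is what transfers the information from the Hausdorff limit $\tilde M$ to the set $\hat M$ defined through its distances. Once this identification is in place, all remaining claims follow from soft set-theoretic manipulations and the regularity of sets satisfying the uniform external ball condition.
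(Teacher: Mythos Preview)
Your argument is correct. For part~(i) you follow essentially the paper's route: take $y\in\hat M$, find witnesses $x_k\in K$ along a subsequence where $y\notin\tilde M_k$, extract a limit $x_\infty\in K$, and pin $y$ onto $\partial B_{r_{x_\infty}}(x_\infty)\subset\partial\hat M$. The only cosmetic difference is that you start directly from $y\in\hat M\setminus\tilde M$ (using that $y\notin\tilde M$ forces $y\notin\tilde M_k$ eventually), whereas the paper considers any $y\in\hat M$ and splits into the cases ``$y\in\tilde M_k$ along a subsequence'' versus ``$y\notin\tilde M_k$ eventually''.

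For part~(ii) your treatment is somewhat different and in fact more self-contained than the paper's. The paper simply says to repeat the proof of Lemma~\ref{lm_loc1Gamloc_A7a} verbatim and then invokes the $BV$ compactness machinery of Corollary~\ref{co_loc1Gamloc_A3_0a} to obtain $\HH^{n-1}(\partial\tilde M)<\infty$ and hence ${\mathcal L}^n(\partial\tilde M)=0$. You instead supply the explicit argument for the interior case (reusing the limiting-sphere idea from~(i) to reach a contradiction when $y\in\mathrm{int}(\tilde M)$ yet $y\notin\tilde M_k$ along a subsequence), and you obtain the ``a.e.'' conclusion via the purely set-theoretic inclusion $\partial\tilde M\subset\partial\hat M$ together with Lemma~\ref{lm_loc1_ub0}, avoiding $BV$ altogether. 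This is a cleaner and more elementary route; the small detour through $\partial\tilde M\subset\partial\hat M$ is not needed for the contradiction itself (the inclusion $\mathrm{int}(\tilde M)\subset\mathrm{int}(\hat M)$, immediate from $\tilde M\subset\hat M$, already rules out $y\in\partial\hat M$), but you do need it for the final measure-zero claim, so no harm done.
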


\begin{proof} The inclusion $\tilde M\subset \hat M$ is immediate by definition of $\hat M$.

Consider now an arbitrary $y\in \hat M$. One has $y\in B_{r_x}(x)^c$, hence
\[
d(y,x)\ge r_x\mbox{ for all }x\in K.
\]
Then
\begin{itemize}
\item[(a)] either $y\in \tilde M_k$ for a subsequence of $k$ (not relabeled), hence $y\in \tilde M$,
\item[(b)] or $y\not \in \tilde M_k$ (for all sufficiently large $k\in \N$), that is,
\[
d(y,x_k)< \dist (x_k, M_k)
\]
for some $x_k\in K$. Passing to a subsequence of $k$ (not relabeled), we have $x_k\to x\in K$, an hence, passing to a limit as $k\to\infty$ in the above estimate, we get
\[
d(y,x)\leq \dist (x, \tilde M)=\dist(x,\hat M)= r_x
\]
for some $x\in K$. Therefore, keeping in mind  that $y\in \hat M$, we get $d(y,x)=r_x$ for some $x\in K$ and $d(y,z)\geq r_z$ for all $z\in K$, which means $y\in \partial \hat M$.
\end{itemize}
This completes the proof of~(i).

To prove(ii), repeat word-for-word the proof of Lemma~\ref{lm_loc1Gamloc_A7a}
with $\tilde M_k$ instead of $M_k$ keeping in mind  that
now
\[
|D1_{\tilde M_k^c}|(\R^n)= \HH^{n-1} (\partial \tilde M_k)\leq C,
\]
by Proposition~\ref{prop_loc1Gamloc_A3_0a}, and acting
as in Corollary~\ref{co_loc1Gamloc_A3_0a} one shows
$\mathcal{H}^{n-1}(\tilde M)<+\infty$, and hence $\mathcal{L}^n(\tilde M)=0$.
\end{proof}

To clarify the above Proposition~\ref{prop_loc1Gamloc_A3_0b}, consider the following example showing that in general one cannot expect $\tilde M=\hat M$, but just $\tilde M\subset \hat M$.

\begin{example}\label{ex_loc1_Mhat}
Let $K$ be the boundary of a stadium with radius $1$, and $\tilde M_k$ be the complement of open stadia with radii larger than $2$, as drawn on Figure~\ref{stadium}: then the limit $\tilde M$ is the complement of the limit stadium of radius $2$, while $\hat M$ also includes the central segment.

\begin{center}
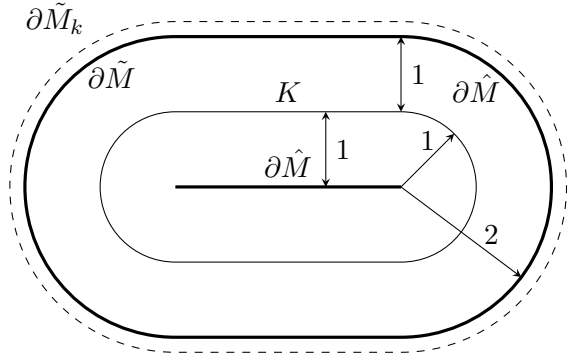
\begin{figure}[h]
\begin{tikzpicture}

\draw (0,0) arc(90:270:1) -- (3,-2) arc (-90:90:1) -- cycle;

\draw[very thick] (0,1) arc(90:270:2) -- (3,-3) arc (-90:90:2) -- cycle;

\draw[dashed] (0,1.2) arc(90:270:2.2) -- (3,-3.2) arc (-90:90:2.2) -- cycle;

\draw[very thick] (0,-1) -- (3,-1);

\draw [<->,>=stealth] (2,-1) -- (2,0) node[midway,right] {$1$};
\draw [<->,>=stealth] (3,0) -- (3,1) node[midway,right] {$1$};

\draw [->,>=stealth] (3,-1) -- (3.707,-0.293) node[midway,above] {$1$};
\draw [->,>=stealth] (3,-1) -- (4.6,-2.2) node[near end,above] {$2$};

\draw (1.5,0) node[above]{$K$};
\draw (1.5,-1) node[above]{$\partial \hat M$};
\draw (4,0) node[above]{$\partial \hat M$};
\draw (-1.6,0.9) node[above]{$\partial\tilde M_k$};
\draw (-1.3,0.5) node[right]{$\partial\tilde M$};

\end{tikzpicture}
\caption{An example with $\tilde M\neq \hat M$}
\label{stadium}
\end{figure}
\end{center}

\end{example}

\begin{lemma}\label{lm_loc1Gamloc_A4}
Let $A$ be as in Lemma~\ref{lm_loc1Gamloc_A3} and
\[
A_\varepsilon:=\left(A^c\right)_\varepsilon\cap A.
\]
Then, for every $\delta>0$ there is a $\delta$-net $\mathcal{G}_{Vol}(A,\varepsilon,\delta)\subset A_\varepsilon$
of $A_\varepsilon$,
such that
\[
\# \mathcal{G}_{Vol}(A,\varepsilon,\delta) \leq C\frac{\varepsilon+\delta}{\delta^{n}},
\]
for some $C>0$ depending only on $R$ and on $\diam \partial A$.
\end{lemma}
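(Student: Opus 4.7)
I would take $\mathcal{G}_{Vol}(A,\varepsilon,\delta)$ to be a maximal $\delta$-separated subset of $A_\varepsilon$; by maximality, this is automatically a $\delta$-net of $A_\varepsilon$. The balls $B_{\delta/2}(y_i)$ centered at its points are pairwise disjoint and each contained in $(A_\varepsilon)_{\delta/2}$, whence
\[
\#\mathcal{G}_{Vol}(A,\varepsilon,\delta)\cdot\omega_n(\delta/2)^n\le\mathcal{L}^n\bigl((A_\varepsilon)_{\delta/2}\bigr).
\]
The whole proof then reduces to the volume estimate $\mathcal{L}^n((A_\varepsilon)_{\delta/2})\le C(\varepsilon+\delta)$ with $C$ depending only on $R$ and $\diam\partial A$.

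To obtain this estimate I would split $(A_\varepsilon)_{\delta/2}$ according to which side of $\partial A$ each point lies on. The triangle inequality shows $(A_\varepsilon)_{\delta/2}\cap A\subset A_{\varepsilon+\delta/2}$: if $y\in A$ and $|y-y'|\le\delta/2$ for some $y'\in A_\varepsilon$, then $\dist(y,A^c)\le\delta/2+\dist(y',A^c)\le\varepsilon+\delta/2$. Moreover, any $y\in A^c$ within $\delta/2$ of a point $y'\in A_\varepsilon\subset A$ is joined to $y'$ by a segment crossing $\partial A$ at distance $\le\delta/2$ of $y$, so $(A_\varepsilon)_{\delta/2}\cap A^c\subset(\partial A)_{\delta/2}\cap A^c$. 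Hence the task reduces to two tube estimates, $\mathcal{L}^n(A_t)\le Ct$ and $\mathcal{L}^n((\partial A)_t\cap A^c)\le Ct$, which need only be proved for $t$ bounded by a universal multiple of $R$, since for larger $t$ both quantities are trivially controlled by the volume of a ball of radius depending on $\diam K$, $R'$, and $t$.

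For the outward tube I would partition $A$, up to the zero-measure ridge set, as $A=\bigsqcup_{x\in K}V_x$, where $V_x$ consists of those $y\in A$ whose closest point in $\overline{A^c}$ lies on $\partial B_{r_x}(x)$. Writing $y=x+s\nu$ in spherical coordinates at $x$, a point $y\in V_x\cap A_t$ has $\nu$ in the angular domain $\Omega_x\subset S^{n-1}$ projecting onto $S_x=\partial B_{r_x}(x)\cap\partial A$ and $s\in[r_x,r_x+t]$, whence
\[
\mathcal{L}^n(V_x\cap A_t)\le\mathcal{H}^{n-1}(\Omega_x)\int_{r_x}^{r_x+t}s^{n-1}\,ds\le(1+t/R)^{n-1}\,t\,\mathcal{H}^{n-1}(S_x),
\]
using $\mathcal{H}^{n-1}(S_x)=r_x^{n-1}\mathcal{H}^{n-1}(\Omega_x)$ and $r_x\ge R$. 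Summing over $x$ and invoking Proposition~\ref{prop_loc1Gamloc_A3_0a} to bound $\sum_x\mathcal{H}^{n-1}(S_x)=\mathcal{H}^{n-1}(\partial A)$ delivers $\mathcal{L}^n(A_t)\le Ct$. For the inward tube, I would use that the uniform $R$-external ball condition on $A$ is equivalent to $A^c$ having uniform internal $R$-ball; thus for $t<R$, almost every $y\in(\partial A)_t\cap A^c$ has a unique closest boundary point on some $S_x$ and lies on the inward normal segment $y=x+(r_x-s)\nu$ with $\nu\in\Omega_x$ and $s\in[0,t]$, so the analogous spherical-coordinate integral $\int_{\Omega_x}\int_0^t(r_x-s)^{n-1}\,ds\,d\sigma(\nu)\le t\,\mathcal{H}^{n-1}(S_x)$ yields, after summation, the same $Ct$ bound.

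The delicate point is the inward-tube estimate: one must attribute each $y\in(\partial A)_t\cap A^c$ to a single sphere piece $S_x$ without double counting arising from overlapping balls in $A^c$, and this is exactly what the $R$-uniform internal-ball condition on $A^c$ secures for $t<R$ by guaranteeing $\mathcal{L}^n$-a.e.\ unique nearest-point projection onto $\partial A$. Without this the naive bound summed ball by ball would involve $\#K$ and blow up as $\#K\to\infty$. Combining the two tube bounds yields $\mathcal{L}^n((A_\varepsilon)_{\delta/2})\le C(\varepsilon+\delta)$, from which the desired inequality $\#\mathcal{G}_{Vol}(A,\varepsilon,\delta)\le C'(\varepsilon+\delta)/\delta^n$ follows immediately.
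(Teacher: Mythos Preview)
Your proof is correct, but it takes a more laborious route than the paper's. Both arguments reduce to the volume estimate $\mathcal{L}^n(\text{tube around }\partial A)\le C(\varepsilon+\delta)$, but the paper obtains this in one stroke by introducing the monotone function
\[
h(t):=\mathcal{L}^n\Bigl(\bigcup_{x\in K}B_{r_x+t}(x)\Bigr),
\]
observing that $h'(t)=\mathcal{H}^{n-1}\bigl(\partial\bigcup_{x}B_{r_x+t}(x)\bigr)$ (a coarea/level-set computation), invoking Proposition~\ref{prop_loc1Gamloc_A3_0a} to bound $h'$ uniformly in $t$, and then applying the mean value theorem to $h(\varepsilon+\delta)-h(-\delta)$. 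This handles the inward and outward tubes simultaneously and sidesteps entirely the double-counting issue you flag as delicate. Your approach, by contrast, splits into the two tubes and integrates explicitly in spherical coordinates over each $S_x$, which is more elementary (no coarea formula needed) but requires the careful nearest-point-projection bookkeeping you describe; the paper's $h(t)$ device is what lets it avoid that. The choice between a cubic grid of step $\delta/\sqrt{n}$ (paper) and a maximal $\delta$-separated set (you) is immaterial.
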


\begin{proof}
Consider a uniform cubic grid of step $\delta/\sqrt{n}$. It is sufficient to estimate the number of cubes in this grid intersecting $A_\varepsilon$, since then one can define $\mathcal{G}_{Vol}(A,\varepsilon,\delta)$ by picking one point on  $A_\varepsilon$ for every cube such that the intersection is non-empty.
These cubes are all contained in the region $\left(\bigcup_{x\in K} B_{r_x+\varepsilon+\delta}(x)\right)\cap\left(\bigcup_{x\in K} B_{r_x-\delta}(x)\right)^c$. Hence, to estimate their number it is sufficient to estimate the volume of this region.

Now, consider the quantity
\[
h(t):=\mathcal{L}^n\left(\bigcup_{x\in K} B_{r_x+t}(x)\right).
\]
The volume we want to estimate is given by $h(\varepsilon+\delta)-h(-\delta)$. To study the function $h$, observe that its derivative is given by the perimeter of the same union of balls:
\[
h'(t)=\HH^{n-1}\left(\partial\left(\bigcup_{x\in K} B_{r_x+t}(x)\right)\right).
\]
Since $h(\delta+\varepsilon)-h(-\delta)=(\varepsilon+2\delta) h'(s)$ for some $s\in(-\delta,\delta)$, and since Proposition~\ref{prop_loc1Gamloc_A3_0a} gives a bound on $h'$ only depending on $R$ and $\diam A$, we may estimate the required volume by $C(\varepsilon+2\delta)$. This implies that the number of disjoint cubes of side $\delta/\sqrt{n}$ completely contained in such a region is bounded above by $C(\varepsilon+\delta)/\delta^{n}$, which concludes the proof.
\end{proof}

\begin{corollary}\label{lm_loc1Gamloc_A3}
Let $A:=\left(\bigcup_{x\in K}B_{r_x}(x)\right)^c$, where $K\subset\R^n$ is a finite set and $r_x\ge R$ for some $R>0$ and for all $x\in K$. Then there is a $\delta$-net $\mathcal{G}_{Surf}(A,\delta)\subset\partial A$ of the boundary $\partial A$ such that
\[
\# \mathcal{G}_{Surf}(A,\delta) \leq \frac{C}{\delta^{n-1}},
\]
for some $C>0$ depending only on $R$ and on $\diam A$ (with the continuous dependence on these parameters).
\end{corollary}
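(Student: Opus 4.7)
The plan is to derive the $\delta$-net of $\partial A$ by bootstrapping from Lemma~\ref{lm_loc1Gamloc_A4} via a short projection argument. The key observation is that $\partial A$ is contained in the ``skin region'' $A_\delta = (A^c)_\delta \cap A$ for every $\delta > 0$: since $A$ is closed, $\partial A \subset A$, while for every $y \in \partial A$ one has $\dist(y, A^c) = 0 < \delta$, and hence $y \in A_\delta$.

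The heart of the argument is then to apply Lemma~\ref{lm_loc1Gamloc_A4} with both the enlargement parameter $\varepsilon$ and the mesh parameter equal to $\delta/2$. This produces a $(\delta/2)$-net $\mathcal{G}_{Vol}(A,\delta/2,\delta/2)$ of $A_{\delta/2}$ satisfying
\[
\# \mathcal{G}_{Vol}(A,\delta/2,\delta/2) \leq C\,\frac{\delta/2+\delta/2}{(\delta/2)^n} = \frac{2^{n}\,C}{\delta^{n-1}},
\]
which is already of the correct order. The constant $C$ depends continuously on $R$ and on $\diam A$ via the perimeter bound of Proposition~\ref{prop_loc1Gamloc_A3_0a}, since $\diam\partial A \leq \diam A$.

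The second step is to move each point of this net onto $\partial A$. If $y \in \mathcal{G}_{Vol}(A,\delta/2,\delta/2) \subset A_{\delta/2}$, then $y \in A$ with $\dist(y,A^c) < \delta/2$; since any segment joining $y \in A$ to a point of $A^c$ must cross $\partial A$, one has $\dist(y,\partial A) \leq \dist(y, A^c) < \delta/2$. Pick any $\pi(y) \in \partial A$ with $|y-\pi(y)|<\delta/2$ and set
\[
\mathcal{G}_{Surf}(A,\delta) := \{ \pi(y) : y \in \mathcal{G}_{Vol}(A,\delta/2,\delta/2) \} \subset \partial A.
\]

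The net property then follows by the triangle inequality: given $x \in \partial A \subset A_{\delta/2}$, there is some $y \in \mathcal{G}_{Vol}$ with $|x-y| \leq \delta/2$, whence $|x-\pi(y)| \leq \delta/2 + \delta/2 = \delta$. The cardinality bound and the claimed dependence of $C$ on $R$ and $\diam A$ are inherited verbatim from the volumetric lemma. There is essentially no serious obstacle here; the only minor delicacy is the elementary topological observation that $\dist(y, \partial A) \leq \dist(y, A^c)$ for points $y$ in the closed set $A$, used to guarantee the existence of the projection $\pi(y)$.
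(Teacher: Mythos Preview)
Your proof is correct and follows essentially the same route as the paper, which simply sets $\mathcal{G}_{Surf}(A,\delta):=\mathcal{G}_{Vol}(A,0,\delta)$ and invokes Lemma~\ref{lm_loc1Gamloc_A4} directly. Your choice of parameters $\varepsilon=\delta/2$, mesh $\delta/2$ together with the projection step is a slightly more careful variant that sidesteps the notational awkwardness of the degenerate case $A_0$, but the underlying idea and the dependence on Lemma~\ref{lm_loc1Gamloc_A4} are identical.
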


\begin{proof}
It is sufficient to set $\mathcal{G}_{Surf}(A,\delta):=\mathcal{G}_{Vol}(A,0,\delta)$ and apply the previous Lemma~\ref{lm_loc1Gamloc_A4}.
%
%
\end{proof}
\bibliographystyle{plain}

\end{document}